\theoremstyle{plain}
\newtheorem{lemma}{Lemma} 
\newtheorem{theorem}[lemma]{Theorem}
\newtheorem{proposition}[lemma]{Proposition}
\newtheorem{claim}{Claim}[lemma]
\theoremstyle{definition}
\newtheorem{definition}[lemma]{Definition}
\newcounter{casenum}	
\newcounter{subcasenum}	
\numberwithin{subcasenum}{casenum}
\newcounter{subsubcasenum}	
\numberwithin{subsubcasenum}{subcasenum}
\renewcommand{\thecasenum}{\arabic{casenum}}
\renewcommand{\thesubcasenum}{\thecasenum.\roman{subcasenum}}
\newcounter{stagenum}
\newenvironment{mycases}
{
  \list{}{%
    \leftmargin0.5cm   
    \rightmargin0cm
  }
  \item\relax
	\setcounter{casenum}{0}
}
{	
	\endlist
}
\newenvironment{subcases}
{
  \list{}{%
    \leftmargin0.5cm   
    \rightmargin0cm
  }
  \item\relax
}
{	
	\endlist
}
\newcommand{\mycase}[1]{
	\vspace{0.5em}
	
	\refstepcounter{casenum}
	\noindent\hspace{-0.5cm}\textit{Case \thecasenum: #1} 
}
\newcommand{\subcase}[1]{
	\vspace{0.25em}
	
	\refstepcounter{subcasenum}
	\noindent\hspace{-0.5cm}\textit{Case \thesubcasenum: #1} 
}
\newcommand{\subsubcase}[0]{
	\refstepcounter{subsubcasenum}
	\noindent\hspace{-0.6cm}{(\alph{subsubcasenum})}
}
\begin{document}

\title{On Choosability with Separation of Planar Graphs with Forbidden Cycles}
\author{Ilkyoo Choi\thanks{Department of Mathematics, University of Illinois. \texttt{$\{$ichoi4,lidicky,stolee$\}$@illinois.edu}} 
	\and Bernard Lidick\'y$^*$
	\and Derrick Stolee$^*$}
\date{\today}
\maketitle

\vspace{-2pc}

\begin{abstract}
We study choosability with separation which is a constrained version of list coloring of graphs.
A \emph{$(k,d)$-list assignment} $L$ of a graph $G$ is a function that assigns to each vertex $v$ a list $L(v)$ of at least $k$ colors and for any adjacent pair $xy$, the lists $L(x)$ and $L(y)$ share at most $d$ colors. A graph $G$ is $(k,d)$-choosable if there exists an $L$-coloring of $G$ for every $(k,d)$-list assignment $L$. 
This concept is also known as choosability with separation.
We prove that planar graphs without 4-cycles are $(3,1)$-choosable and that planar graphs 
without 5-cycles and 6-cycles are $(3,1)$-choosable. In addition, we give an alternative 
and slightly stronger proof that triangle-free planar graphs are $(3,1)$-choosable.
\end{abstract}

\section{Introduction}

Given a graph $G$, a {\it list assignment} $L$ is a function on $V(G)$ that assigns to each vertex $v$ a list $L(v)$ of (\emph{available}) colors.
An \emph{$L$-coloring} is a vertex coloring $\varphi$ such that $\varphi(v) \in L(v)$ for each vertex $v$ and $\varphi(x)\neq \varphi(y)$ for each edge $xy$.
A graph $G$ is said to be $k$-choosable if there is an $L$-coloring for each list assignment $L$ where $|L(v)|\geq k$ for each vertex $v$. 
The minimum such $k$ is known as the choosability of $G$, denoted $\chi_\ell(G)$.
A graph $G$ is said to be $(k, d)$-choosable if there is an $L$-coloring for each list assignment $L$ where $|L(v)|\geq k$ for each vertex $v$ and $|L(x)\cap L(y)|\leq d$ for each edge $xy$. 

This concept is known as choosability with separation, since the second parameter may force the lists on adjacent vertices to be somewhat separated. 
If $G$ is $(k,d)$-choosable, then $G$ is also $(k',d')$-choosable for all $k' \geq k$ and $d' \leq d$.
A graph is $(k,k)$-choosable if and only if it is $k$-choosable.
Clearly, all graphs are $(k,0)$-choosable for $k\geq 1$. 
Thus, for a graph $G$ and each $1 \leq k < \chi_\ell(G)$, there is some threshold $d \in \{0,\dots,k-1\}$ such that $G$ is $(k,d)$-choosable but not $(k,d+1)$-choosable.

This concept of choosability with separation was introduced by Kratochv\'{\i}l, Tuza, and Voigt~\cite{ktv}.
They used the following, more general definition.
A graph $G$ is \emph{$(p,q,r)$-choosable}, if for every list assignment $L$ with $|L(v)| \geq p$ for each $v \in V(G)$
and $|L(u) \cap L(v)| \leq p-r$ whenever $u,v$ are adjacent vertices, $G$ is $q$-tuple $L$-colorable.
Since we consider only $q = 1$, we use a simpler notation.
They investigate this concept for both complete graphs and sparse graphs. 
The study of dense graphs were extended to complete bipartite graphs and multipartite graphs by F\"uredi, Kostochka, and Kumbhat~\cite{zkk1,zkkarxiv}.

Thomassen~\cite{thomassen1994} proved that planar graphs are 5-choosable, and hence they are $(5,d)$-choosable for all $d$.
Voigt~\cite{95Voigt} constructed a non-$4$-choosable planar graph, and there are also examples of non-$(4,3)$-choosable planar graphs.
Kratochv\'{\i}l, Tuza, and Voigt~\cite{ktv} showed that all planar graphs are $(4,1)$-choosable.
The question of whether all planar graphs are $(4, 2)$-choosable or not was raised in the same paper and it still remains open. 

Voigt~\cite{93Voigt} also constructed a non-$3$-choosable triangle-free planar graph.
\v Skrekovski~\cite{riste} observed that there are examples of triangle-free planar
graphs that are not $(3,2)$-choosable, and posed the question of whether or not every planar
graph is $(3,1)$-choosable;
Kratochv\'{\i}l, Tuza and Voigt~\cite{ktv} proved the following partial case of this question: 
\begin{theorem}\label{c3free}
Every triangle-free planar graph is $(3, 1)$-choosable.
\end{theorem}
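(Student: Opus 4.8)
The plan is a proof by contradiction using reducible configurations and discharging, which I would carry out as an induction on a slightly more general statement so that the same machinery also feeds the paper's later results. Suppose the theorem fails, and among all triangle-free plane graphs $G$ with a $(3,1)$-list assignment $L$ that admits no $L$-coloring, pick one with $|V(G)|+|E(G)|$ smallest. First I would record the routine structural reductions forced by minimality: $G$ is connected; $\delta(G)\ge 3$, since a vertex $v$ with $d(v)\le 2$ could be deleted, $G-v$ coloured by minimality, and $v$ coloured from $L(v)$ after removing the at most $2$ colours used on its neighbours; and $G$ is $2$-connected with no chord or separating short cycle on its outer face. The last point is where it pays to phrase the induction more generally — allowing a path on at most two vertices of one face to be precoloured, Thomassen-style — because then one may split $G$ at a cut vertex or a chord, colour one side, and pass the colours on the shared vertices to a precolouring of the other side; this same generality is essentially what the proofs of the two new theorems need as well.

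The heart of the argument is a list of local configurations that cannot appear in the minimal counterexample, all built around $3$-vertices and short faces. The key leverage — unavailable for ordinary $3$-choosability, which fails here — is the separation condition: for a $3$-vertex $v$ with neighbours $x,y,z$, an $L$-colouring of $G-v$ fails to extend to $v$ only when $\varphi(x),\varphi(y),\varphi(z)$ are three \emph{distinct} colours all lying in $L(v)$, and since $|L(u)\cap L(v)|\le 1$ for each neighbour $u$, each neighbour contributes at most one "dangerous" colour of $L(v)$. One then shows that by deleting one or two colours from the list of a well-chosen neighbour, or by recolouring along a short path or a $4$-cycle, one can always re-$L$-colour $G$ minus the configuration so as to break this triple coincidence. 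The reducible configurations I would aim to establish are of the type: a $3$-vertex adjacent to another $3$-vertex; two $3$-vertices with a common neighbour; a $4$-face incident with too many $3$-vertices; a $3$-vertex whose incident faces are all $4$-faces carrying further $3$-vertices. In several of these the right move is not deletion but identifying two suitable non-adjacent vertices whose lists can be merged without violating the $(3,1)$-condition, and then invoking minimality of the resulting smaller graph.

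With the reducible configurations in hand, the finish is a discharging argument. Since $G$ is $2$-connected, triangle-free and plane, Euler's formula yields $\sum_{v\in V(G)}(d(v)-4)+\sum_{f}(\ell(f)-4)=-8$, so giving each vertex $v$ charge $d(v)-4$ and each face $f$ charge $\ell(f)-4$ puts all the negative charge on $3$-vertices while every face is nonnegative. Using rules that send charge from faces of length at least $5$ and from vertices of degree at least $5$ to nearby $3$-vertices, one argues from the reducibility results that every $3$-vertex ends nonnegative and nothing else turns negative, contradicting the total $-8$. I expect the main obstacle to be precisely the reducible-configuration analysis around $4$-cycles: triangle-free planar graphs can be dense (up to $2n-4$ edges, hence many $4$-faces and many $3$-vertices), so a coarse family of configurations will not balance the discharging, and for each configuration one must check all the ways the neighbours' lists can meet $L(v)$ — whether the at-most-one shared colours coincide or not — to be sure the local recolouring is legitimate. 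That case bookkeeping, rather than the overall shape of the proof, is where the real work lies.
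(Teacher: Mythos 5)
Your proposal starts on the right foot but then departs from the paper's route at the decisive step. You correctly identify the Thomassen-style strengthening (a precolored path on at most two vertices of the outer face, handling cut-vertices and chords by splitting and minimality), and you correctly pin down the key leverage afforded by the separation condition: for a boundary vertex $v_1$ with boundary neighbors $v_0,v_2$, the sets $L(v_1)\cap L(v_0)$ and $L(v_1)\cap L(v_2)$ each have size at most one, so some color of $L(v_1)$ lies outside $L(v_0)\cup L(v_2)$. But the paper does \emph{not} pivot to discharging; it finishes entirely inside the boundary induction. After ruling out chords of the outer cycle $F$, it defines a $2$-chord $v_0v_1v_2$ of $F$ to be \emph{bad} when an endpoint is an $L2$-vertex, proves that $F$ must contain a \emph{good} $L3$-vertex $v_1$ (one in no bad $2$-chord) by a shortest-counterexample argument on bad $2$-chords, assigns $v_1$ a color $c\notin L(v_0)\cup L(v_2)$, deletes $v_1$, and strips $c$ from the lists of the (necessarily interior) neighbors of $v_1$. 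Triangle-freeness makes those new $L2$-vertices pairwise nonadjacent, and goodness of $v_1$ keeps them away from pre-existing $L2$-vertices, so the inductive invariant (``$L2$-vertices form an independent set'') is preserved. That invariant is the ingredient your Thomassen-side setup omits, and without it the reduction step does not visibly land back inside the hypothesis.

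The discharging plan you propose instead is not only a genuinely different route but one with a concrete gap: the centerpiece reducible configurations are not established, and at least one of them is not reducible by the natural deletion argument. Take two adjacent interior $3$-vertices $u,v$ with $N(u)=\{v,u_1,u_2\}$ and $N(v)=\{u,v_1,v_2\}$. After coloring $G-\{u,v\}$, it can happen that $L(u)\setminus\{\varphi(u_1),\varphi(u_2)\}$ and $L(v)\setminus\{\varphi(v_1),\varphi(v_2)\}$ are both the same singleton $\{c\}$ with $c\in L(u)\cap L(v)$; the $(3,1)$-condition permits exactly this (e.g.\ $L(u)=\{1,2,3\}$, $L(v)=\{1,4,5\}$, with $\varphi(u_1)=2,\varphi(u_2)=3,\varphi(v_1)=4,\varphi(v_2)=5$). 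So this configuration needs a more global recoloring argument before it can serve in a discharging proof, and the discharging itself would have to cope with near-extremal triangle-free planar graphs that are almost all $4$-faces and $3$-vertices, which you rightly flag as the hard part. In short, your high-level intuition about why separation helps is correct, but the finish you sketch is harder than and different from the paper's, which closes the induction directly via the good-vertex reduction with no discharging at all.
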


We strengthen Theorem~\ref{c3free} by showing an alternative proof that uses a method developed by Thomassen; we also use this method to prove Theorem~\ref{c4free} below. 
Our inspiration was Thomassen's proof~\cite{thomassen1995-34} that every planar graph of girth 5 is 3-choosable.
We also prove the following two different partial cases:

\begin{theorem}\label{c4free}
Every planar graph without $4$-cycles is $(3, 1)$-choosable.
\end{theorem}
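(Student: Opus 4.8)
The plan is to adapt Thomassen's inductive method, as the authors suggest. The first step is to replace Theorem~\ref{c4free} by a stronger statement amenable to induction. A natural candidate: let $G$ be a $2$-connected plane graph without $4$-cycles whose outer face is bounded by a cycle $C = v_1 v_2 \cdots v_t$, and let $L$ be a list assignment with $|L(x) \cap L(y)| \le 1$ for every edge $xy$ such that the edge $v_1 v_2$ is precolored (so $|L(v_1)| = |L(v_2)| = 1$ and $L(v_1) \ne L(v_2)$), $|L(v_i)| \ge 2$ for $3 \le i \le t$, and $|L(v)| \ge 3$ for every interior vertex $v$; then $G$ is $L$-colorable. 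It is likely that a few extra conditions must be imposed---forbidding chords of $C$ incident to $\{v_1,v_2\}$, or demanding $|L(v_i)| \ge 3$ for the bounded number of boundary vertices lying on a triangle through $v_1$ or $v_2$---so that the gluing steps below produce valid smaller instances; isolating the minimal such list of conditions is itself part of the work. Granting the strengthened statement, Theorem~\ref{c4free} follows routinely: a minimum counterexample is $2$-connected, and on its outer cycle we may precolor an edge with two distinct colors drawn from the relevant size-$3$ lists, after which every hypothesis holds with room to spare on the boundary and interior. Because triangles are permitted here---unlike in the girth-$5$ setting that inspired the approach---a bare induction will not close, and a global discharging argument is needed at the end.

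For the strengthened statement, I would take a counterexample $G$ minimizing $|V(G)| + |E(G)|$ and establish structural lemmas. The classical ones: $C$ has no chord (a chord splits $G$ into two smaller instances sharing an edge, which are colored and merged); $G$ contains no separating cycle of length $3$ (color the side containing $v_1v_2$, then the other side with the now-precolored triangle---this is one place the strengthened statement may need to allow a precolored triangle rather than merely an edge); every interior vertex has degree at least $3$ (delete an interior vertex of degree at most $2$ and extend); and several reductions for small configurations touching $C$, where the separation hypothesis is essential---for instance, a degree-$2$ boundary vertex $v_i$ typically retains a free color once its off-$C$ neighbor, or the precoloring, is accounted for, precisely because adjacent lists overlap in at most one color. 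The $4$-cycle-free hypothesis enters decisively here: no two triangular faces share an edge (such a pair yields a $4$-cycle), and hence a vertex of degree $3$ lies on at most one triangular face (two incident triangles at a $3$-vertex again produce a $4$-cycle).

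The final step is discharging. Give each vertex $v$ charge $d(v) - 4$ and each face $f$ charge $\ell(f) - 4$, so that $\sum_v (d(v) - 4) + \sum_f (\ell(f) - 4) = -8$ by Euler's formula, with the outer face handled separately. Since there are no $4$-faces, triangular faces have charge $-1$, all other faces have charge at least $1$, and only degree-$3$ vertices have negative charge, equal to $-1$. The redistribution rules push charge from faces of length at least $5$ to incident degree-$3$ vertices and to adjacent triangular faces; using the structural lemmas---no adjacent triangles, a $3$-vertex on at most one triangle, bounds on how many triangular faces and low-degree vertices a fixed large face can be near, together with the structure around $C$---one argues that every vertex and every face ends with nonnegative charge, contradicting the total $-8$.

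The main obstacle is twofold. First, one must pin down exactly the right strengthened statement: every reduction (chords, separating triangles, triangles meeting the precolored edge, small configurations near $C$) has to yield legitimate smaller instances, and experience with such arguments shows this usually forces a handful of auxiliary list-size conditions on select boundary vertices; making those conditions weak enough to still deduce Theorem~\ref{c4free} yet strong enough to survive the reductions is delicate. Second, and more substantively, the discharging must fully absorb the densest clusters of triangular faces and degree-$3$ vertices around a single face of length $5$ or $6$; pushing the rules through there will demand the sharpest structural lemmas, and it is quite likely that additional reducible configurations---short separating cycles, or specific arrangements of triangles with $2$- and $3$-vertices---must be identified precisely because a first attempt at the discharging breaks on them.
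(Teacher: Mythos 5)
There is a genuine gap: your write-up is a plan whose two load-bearing components are exactly the ones you leave unresolved. First, the strengthened induction hypothesis is never pinned down; you explicitly defer ``isolating the minimal such list of conditions,'' but that is the crux of the whole method. The paper's Theorem~\ref{thm:thomassen4cycle} shows what is actually needed: a precolored path $P$ of at most \emph{three} vertices (not just an edge, since separating triangles force you to re-enter with a precolored triangle), boundary lists of size at least $2$, interior lists of size $3$, the $(*,1)$-separation condition, \emph{no two $L2$-vertices adjacent}, and \emph{no $L2$-vertex with two neighbors in $P$}. Without fixing these conditions you cannot verify that your chord/separating-triangle reductions hand back legitimate smaller instances, so none of your structural lemmas is actually established. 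Second, your assertion that ``a bare induction will not close'' and that a global discharging argument is required is unsubstantiated and, as the paper shows, unnecessary: after eliminating good chords and the relevant separating $2$-chords (Claims~\ref{claim:chord}--\ref{claim:2chordP}), Claims~\ref{cl:L2vertex} and~\ref{cl:L3vertex} force the outer cycle to alternate between $L2$- and $L3$-vertices, and then one colors a small set $X\subseteq\{v_2,v_3,v_4\}$ next to $P$ by a short case analysis and deletes it; the $(*,1)$-condition guarantees the interior neighbors of $X$ lose at most one color each, keep lists of size at least $2$, and form an independent set (any violation creates a $4$-cycle), so the induction closes with no discharging at all.

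Even taken on its own terms, your discharging step is not a proof sketch but a statement of intent: you give no rules, no treatment of the outer face and the precolored edge (where the charge bookkeeping is most delicate), and no list of reducible configurations, and you concede that a first attempt will likely break. Moreover, discharging in this setting cannot be ``global'' and list-free: the reducible configurations must be shown colorable under the $(*,1)$-lists with the boundary conditions of the strengthened statement, which is precisely the interaction you have not worked out. So while your opening moves (minimal counterexample, $2$-connectivity, no chords, no separating triangles, interior degree at least $3$, and the observation that $C_4$-freeness forbids adjacent triangles) match the paper's, the proposal stops before either of the genuinely hard steps and therefore does not establish the theorem.
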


\begin{theorem}\label{c56free}
Every planar graph without $5$-cycles and $6$-cycles is $(3, 1)$-choosable.
\end{theorem}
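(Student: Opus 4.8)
The plan is to prove Theorem~\ref{c56free} by Thomassen's method, exactly as the authors intend for Theorems~\ref{c4free} and \ref{c56free}: one proves a more flexible precoloring‑extension statement that carries an induction on $|V(G)|$. A workable form: if $G$ is a plane graph with no $5$‑ or $6$‑cycle whose outer face is bounded by a cycle $C$, if $p_1p_2$ is an edge of $C$ with prescribed distinct colors, and if $L$ assigns lists with $|L(v)|\ge 3$ for interior $v$, $|L(v)|\ge 2$ for $v\in V(C)\setminus\{p_1,p_2\}$, and $|L(x)\cap L(y)|\le 1$ on every edge $xy$ avoiding $\{p_1,p_2\}$ — plus a mild restriction near $p_1p_2$ (say, the neighbours of $p_1,p_2$ on $C$ get lists of size $3$, or no triangle uses $p_1p_2$; the exact form pinned down so the recursion closes) — then the precoloring extends to an $L$‑coloring. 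Theorem~\ref{c56free} then follows from the $2$‑connected case by taking $C$ facial and $p_1p_2$ a boundary edge, after the standard reduction to $2$‑connected graphs.

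Let $(G,C,L)$ be a counterexample with $|V(G)|$ smallest. The usual minimality arguments should give: $G$ is $2$‑connected; $C$ has no chord (a chord splits $G$ into two strictly smaller valid instances that are colored and merged); $G$ has no separating cycle of length $3$ or $4$ (color outside such a cycle, then the inside, where the $C_5,C_6$‑free and separation hypotheses keep the interior lists large enough — this case needs care precisely because triangles and $4$‑faces are permitted); and every interior vertex has degree $\ge 3$ (a degree‑$\le 2$ interior vertex is deleted and recolored). The next, harder step is to rule out interior vertices of degree exactly $3$ — or at least those inside a small configuration — and to constrain the low‑degree vertices of $C$ and those near $p_1p_2$. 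This is where the separation parameter $d=1$ is used: when a vertex sees all of its few neighbours already colored, one either identifies two well‑chosen neighbours or argues directly that, since adjacent lists share at most one color, the colors forbidden at $v$ cannot exhaust $L(v)$.

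Assuming these reducible configurations, I would close by discharging. Assign charge $d(v)-4$ to each vertex $v$ and $\ell(f)-4$ to each face $f$; by Euler's formula the total is $-8$. The point of forbidding $5$‑ and $6$‑cycles is that every face has length $3$, $4$, or $\ge 7$, so the only negatively charged faces are triangles (charge $-1$), $4$‑faces are neutral, faces of length $\ge 7$ carry charge $\ge 3$, vertices of degree $\ge 5$ carry positive charge, and the outer face's bounded deficit is absorbed by the precoloring. One designs rules sending charge from long faces and high‑degree vertices to triangles and to $C$, so that every vertex and every bounded face finishes with charge $\ge 0$. The reducible configurations are exactly what guarantees that enough charge reaches each triangle: a triangle all of whose incident faces are triangles or $4$‑faces and all of whose vertices have small degree would be reducible. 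Summing the nonnegative final charges contradicts the total $-8$.

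The main difficulty, and where I expect most of the work, is twofold. First, unlike the triangle‑free setting of Theorem~\ref{c3free}, triangles and $4$‑faces now occur and may cluster — planar graphs without $5$‑ and $6$‑cycles can be as dense as triangle‑free planar ones, as $K_2+\overline{K_n}$ shows — so the technical hypothesis near the precolored edge must be chosen carefully enough to survive vertex deletions and splits along chords and short separating cycles, and the $3$‑ and $4$‑face analysis when excluding short separating cycles is delicate. Second, the discharging must tolerate many mutually incident triangles and $4$‑faces, so assembling the complete list of reducible configurations around triangles — and there actually exploiting the $(3,1)$ separation to produce a free color — is the technical heart of the argument; I would budget the bulk of the effort for the degree‑$3$ interior vertex case and the small triangle and $4$‑face configurations.
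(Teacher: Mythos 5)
Your plan and the paper diverge at the very first step: the choice of strengthening. You propose a Thomassen-style precoloring-extension statement (precolored edge $p_1p_2$ on the boundary cycle, size-2 lists on the rest of the boundary, size-3 lists inside), the same shape of hypothesis the authors use for the $C_3$-free and $C_4$-free cases, and then finish with discharging. The paper does \emph{not} do this for the $C_5,C_6$-free case. Instead, Theorem~\ref{thm:c5c6_up} strengthens Theorem~\ref{c56free} only by precoloring a \emph{single} vertex $p$ and keeping all other lists at size exactly~$3$; the size-2 boundary layer is dropped entirely. The entire burden is carried by discharging, not by a boundary-reduction recursion. The single-vertex strengthening is what makes the discharging tractable: in a minimum counterexample every vertex other than $p$ has degree~$\geq 3$ (size-3 lists let you delete and greedily recolor a degree-$\leq 2$ vertex), and the authors assign charge $\mu(v)=2d(v)-6$ to vertices and $\mu(f)=d(f)-6$ to faces (total $-6$), so all vertex charges are automatically nonnegative and only faces need to be rescued. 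Your $d(v)-4,\ \ell(f)-4$ assignment gives degree-$3$ vertices charge $-1$, which, combined with size-2 boundary vertices of potentially low degree, would force you to route charge to vertices as well, a considerably heavier task.

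The more substantive issue is the part you explicitly leave open, namely which ``mild restriction near $p_1p_2$'' will make the recursion close. This is not a cosmetic gap: the paper's own conclusion points out (Figure~\ref{fig-notgeneralize}) that the boundary-precoloring hypotheses of Theorems~\ref{thm:thomassen3cycle} and \ref{thm:thomassen4cycle} do not survive once triangles and $4$-cycles can cluster, which is exactly the situation in the $C_5,C_6$-free case. You would have to show that chord cuts and separating $3$- and $4$-cycles preserve your hypothesis, and the presence of diamonds, facial $K_4$'s, and $3$-faces sharing vertices makes it unclear that any simple constraint near $p_1p_2$ does so. The paper sidesteps this entirely by not maintaining a boundary at all. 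You would also still need the actual content of the argument that you defer: the explicit reducible configurations (the paper lists sixteen, (C1)--(C16), and proves the ``compound'' ones via a single iterative-construction lemma that replaces a pendant edge of a reducible configuration with a bad $3$-face) and the face-to-face rules that let long faces feed charge around low-degree corners. So while the high-level shape (reducible configurations + discharging, with the $(3,1)$ separation used to free up a color when a vertex's neighbors are colored) matches the paper's, the inductive scaffold you chose is one the authors considered and could not make work here, and the part you leave unspecified is precisely where it breaks.
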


These results are similar in nature to other results on the choosability of planar graphs when certain cycles are forbidden 
(see a survey of Borodin~\cite{borodin13}).
One of the motivations is Steinberg's Conjecture that states that all planar graphs containing no $4$- or $5$-cycles are 3-colorable~\cite{Steinberg}.
We construct a planar graph without cycles of length $4$ and $5$ that is not $(3, 2)$-choosable, to show that Steinberg's Conjecture cannot be extended to $(3,2)$-choosability.

Theorems~\ref{c3free} and \ref{c4free} are shown in Sections $2$ and $3$, respectively.
Theorem~\ref{c56free} uses a discharging technique, and is showed in Section $4$.

\subsection{Preliminaries and Notation}

Always $L$ is a list assignment on the vertices of a graph $G$.
In our proofs of Theorems~\ref{c3free} and \ref{c4free}, we use list assignments where vertices can have lists of different sizes.
A \emph{$(*,1)$-list assignment} is a list assignment $L$ where $|L(v)|\geq 1$ and $|L(u) \cap L(v)| \leq 1$ for every pair of adjacent vertices $u, v$.
A vertex $v$ is an \emph{$Ld$-vertex} when $|L(v)| = d$.

\newcommand{\cin}{\mathop{\mathrm{Int}}}
\newcommand{\cex}{\mathop{\mathrm{Ext}}}
Given a graph $G$ and a cycle $K\subset G$, an edge $uv$ of $G$ is a \emph{chord} of $K$ if $u,v \in V(K)$, but $uv$ is not an edge of $K$.
For an integer $k\ge 2$, a path $v_0v_1\ldots v_k$ is a \emph{$k$-chord\/} if $v_0,v_k\in V(K)$ and $v_1, \ldots, v_{k-1}\not\in V(K)$.  
If $G$ is a plane graph, then let $\cin_K(G)$ be the subgraph of $G$ consisting of the vertices and edges drawn inside the closed disc bounded by $K$,
and let $\cex_K(G)$ be the subgraph of $G$ obtained by removing all vertices and edges drawn inside the open disc bounded by $K$.
In particular, $K = \cin_K(G) \cap \cex_K(G)$. 

Note that each $k$-chord of $K$ belongs to exactly one of $\cin_K(G)$ or $\cex_K(G)$.
If the cycle $K$ is the outer face of $G$ and $Q$ is a $k$-chord of $K$, then let $C_1$ and $C_2$ be the two cycles in $K\cup Q$ that contain $Q$. Then the subgraphs $G_1=\cin_{C_1}(G)$ and $G_2=\cin_{C_2}(G)$ are the {\em $Q$-components} of~$G$.

A graph $G$ is \emph{$H$-free} if it does not contain a copy of $H$ as a subgraph.

\section{Forbidding 3-cycles}

In this section, we prove Theorem~\ref{c3free} as a corollary of the following theorem.
Observe that any $(3,1)$-list assignment on a triangle-free plane graph satisfies the conditions of the following theorem.

\begin{theorem}\label{thm:thomassen3cycle}
Let $G$ be a triangle-free plane graph with outer face $F$ with a subpath $P \subset F$
containing at most two vertices, and let $L$ be a $(*,1)$-list assignment such that the following conditions are satisfied:
\begin{enumerate}[(i)]
\item $|L(v)| \geq 3$ for $v \in V(G)\setminus V(F)$,
\item $|L(v)| \geq 2$ for $v \in V(F)\setminus V(P)$,
\item $|L(v)| = 1 $ for $v \in V(P)$,
\item no two vertices with lists of size two are adjacent in $G$,
\item the subgraph induced by $V(P)$ is $L$ colorable.
\end{enumerate}
Then $G$ is $L$-colorable.
\end{theorem}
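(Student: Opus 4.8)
The plan is to argue by contradiction, taking $G$ together with $L$ to be a counterexample minimizing $|V(G)| + |E(G)|$. This is the standard Thomassen-style setup, so the bulk of the work is establishing structural properties of a minimal counterexample and then ruling out the remaining configurations by a discharging argument. First I would record easy reductions: $G$ is connected (else color components separately, noting the component not containing $P$ can be handled by treating an arbitrary vertex of its outer face as a singleton path after deleting one color), the outer face $F$ is a cycle (a cut vertex or bridge on $F$ lets us split $G$ into smaller instances glued along a vertex, and the precolored path $P$ lives in one piece while the other inherits a size-$1$ list at the shared vertex), and every vertex not on $F$ has degree at least $3$ (a vertex $v\notin V(F)$ of degree $\le 2$ can be deleted, colored last since its at-most-$2$ colored neighbors leave a color available from its list of size $\ge 3$; here the hypothesis that $G$ is triangle-free guarantees deleting $v$ does not create a chord-type obstruction). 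The key point enabled by triangle-freeness is that there are no short separating cycles to worry about, which is exactly why forbidding $3$-cycles makes the argument cleaner than the general planar case.

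\textbf{Handling chords and separating structures.} Next I would show $F$ has no chord: if $xy$ is a chord, it splits $G$ into two subgraphs $G_1, G_2$ sharing the edge $xy$; arrange that $P \subseteq G_1$ (possibly $|P|\le 1$ in $G_2$), apply minimality to $G_1$ to get a coloring, then use the colors of $x$ and $y$ as a precolored path of length $2$ in $G_2$ and apply minimality again — one must check condition (iv) still holds, which may require first recoloring or choosing the $G_1$-coloring carefully so that $x,y$ do not receive colors forcing a size-$2$ neighbor conflict in $G_2$; this is a delicate bookkeeping step. Similarly I would rule out separating $2$-chords and more generally short separating paths between two vertices of $F$, since a triangle-free graph has no separating cycle of length $\le 4$ but longer separating cycles with all interior vertices of degree $\ge 3$ still need to be excluded via the same split-and-recolor idea. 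After these reductions, $G$ is a $2$-connected plane graph, triangle-free, with $F$ an induced chordless cycle, all interior vertices of degree $\ge 3$, and the only vertices of small degree or small list sit on $F$.

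\textbf{The discharging.} With the structural lemmas in hand, I would assign to each vertex $v$ charge $d(v) - 4$ and to each face $f$ charge $\ell(f) - 4$, so that by Euler's formula the total charge is $-8$; then I would design discharging rules (moving charge from large faces to small faces, and handling the outer face as a special sink) to reach a contradiction, using triangle-freeness so every internal face has length $\ge 4$ hence nonnegative initial charge, and using the fact that interior vertices have degree $\ge 3$ while $2$-valent vertices only occur on $F$. The outer face $F$ with its size-$1$ and size-$2$ lists will need to absorb a bounded deficit, and the constraint that no two size-$2$ vertices are adjacent (condition (iv)) is what prevents a long path of $2$-valent, size-$2$ boundary vertices from being uncolorable — essentially one greedily colors $F$ from the precolored path outward and the separation parameter $d=1$ guarantees each boundary vertex loses at most one color to its boundary neighbor.

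\textbf{Main obstacle.} The hardest part will be the chord and separating-path reductions together with maintaining condition (iv): when we cut along a chord $xy$ and feed the colors of $x,y$ into the other component as a precolored path, we must guarantee that in the second instance no two size-$2$ vertices become adjacent and that $x,y$ with their new singleton lists still satisfy the separation condition $|L(x)\cap L(y)|\le 1$ along every edge — this forces us to prove that we can always choose the first coloring so that the induced singleton lists are "compatible," which is the real content of the argument. The discharging itself, once the configuration list is pinned down, should be routine.
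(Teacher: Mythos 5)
There is a genuine gap: your proposal never supplies the actual coloring argument. After the structural reductions you defer everything to a discharging scheme (charges $d(v)-4$ and $\ell(f)-4$) whose rules and reducible configurations are never specified, and the sketch as given cannot work in the stated form: in a triangle-free plane graph the negatively charged objects are the interior degree-$3$ vertices, which can occur throughout the graph (e.g.\ in quadrangulations), not only near the outer face, so ``large faces send charge to small faces and $F$ absorbs a bounded deficit'' does not address where the deficit actually sits; moreover the final coloring step you invoke --- ``greedily color $F$ outward from $P$'' --- says nothing about interior vertices, which is where the whole difficulty lies. Two of your auxiliary claims are also wrong or unfounded: a triangle-free graph certainly can have separating $4$-cycles (only $3$-cycles are excluded), and the ``delicate bookkeeping'' you fear in the chord reduction does not arise --- when you cut along a chord $uv$ and precolor $u,v$ in the second component, their lists become singletons (not size two), so condition (iv) is untouched, the separation hypothesis only improves as lists shrink, and $\varphi(u)\neq\varphi(v)$ gives (v); since $P$ has at most two vertices, $uv$ is a legitimate precolored path. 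So the chord claim is fine, but the heart of the proof is missing.

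For comparison, the paper's proof contains no discharging at all. After showing $2$-connectivity and chordlessness of $F$, it defines a $2$-chord $v_0v_1v_2$ of $F$ to be bad if $v_0$ or $v_2$ has a list of size two, calls an $L3$-vertex of $F$ good if it lies in no bad $2$-chord, and proves by an extremal (innermost $Q$-component) argument that a good vertex $v_1$ exists. The separation condition then yields a color $c\in L(v_1)$ avoiding $L(v_0)\cup L(v_2)$ (each neighboring list meets $L(v_1)$ in at most one color); one colors $v_1$ with $c$, deletes it, and removes $c$ from its neighbors' lists. Triangle-freeness makes the new size-two vertices independent, and goodness of $v_1$ preserves condition (iv), so minimality finishes the induction. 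This good-vertex deletion step, together with the existence claim for good vertices, is the essential content that your proposal would need to replace with a fully specified set of reducible configurations and discharging rules --- and you have not done so.
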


\begin{proof}
Let $G$ be a counterexample where $|V(G)| + |E(G)|$ is as small as possible.
By the minimality of $G$, we assume that $|L(u) \cap L(v)| = 1$ for every edge $uv \in E(G)\setminus E(P)$.
If otherwise, then we can remove the edge $uv$ to obtain an $L$-coloring of $G - uv$, which is also an $L$-coloring of $G$.
It is also clear that $G$ is connected. 

We quickly prove that $G$ is 2-connected.
Suppose $v$ is a cut-vertex of $G$.
There exist nontrivial connected induced subgraphs $G_1$ and $G_2$ of $G$ such that $G_1 \cup G_2 = G$ and $V(G_1) \cap V(G_2) = \{v\}$. 
Assume by symmetry that $P \subseteq G_1$.
By the minimality of $G$, there exists an $L$-coloring $\varphi$ of $G_1$.
Let $L'$ be the list assignment on $V(G_2)$  where $L'(u) = L(u)$ if $u\neq v$ and $L'(v) = \{\varphi(v)\}$; the lists $L'$ satisfy the hypothesis on $G_2$.
By the minimality of $G$, the graph $G_2$ has an $L'$-coloring $\psi$ where $\psi(v) = \varphi(v)$, so $\varphi$ and $\psi$ form an $L$-coloring of $G$.

Since $G$ is 2-connected, the outer face is bounded by a cycle.
In the following claims, we prove that the cycle on $F$ does not have chords or certain types of 2-chords.

\begin{claim}
$F$ does not contain any chords.
\end{claim}
\begin{proof}
Suppose for the sake of contradiction that $Q=uv$ is a chord of $F$.
Let $G_1$ and $G_2$ be the two $Q$-components of $G$.
Assume by symmetry that $P \subseteq G_1$. By the minimality of $G$, there exists an $L$-coloring $\varphi$ of $G_1$.
Let $L'$ be the list assignment on $V(G_2)$ where for $x \in V(G_2)$, $L'(x) = \varphi(x)$ if $x \in \{u,v\}$ and $L'(x) = L(x)$ otherwise.
By the minimality of $G$, there exists an $L'$-coloring $\psi$ of $G_2$ with $\psi(u) = \varphi(u)$ and $\psi(v) = \varphi(v)$; together $\psi$ and $\varphi$ form an $L$-coloring of $G$.
\end{proof}

A $2$-chord $v_0v_1v_2$ of $F$ is \emph{bad} if $v_0$ or $v_2$ is an $L2$-vertex.
An $L3$-vertex $x \in V(F)$ is \emph{good} if there is no bad $2$-chord of $F$ containing $x$.

\begin{claim}
$G$ has a good vertex.
\end{claim}
\begin{proof}
Suppose that $F$ has no good vertex, so all $L3$-vertices in $F$ are contained in a bad chord.
Since $G$ is 2-connected and triangle-free, $|V(F)| \geq 4$. 
Hence $F$ contains at least one $L3$-vertex.
Among all $L3$-vertices in $F$, let $v_0$ be an $L3$-vertex with a bad 2-chord $Q=v_0v_1v_2$ such that the size of the $Q$-component $G_2$ not containing $P$ is minimized. 

Let $u$ be the neighbor of $v_2$ on $F$ that is in $G_2$.
Since $G$ is triangle-free, the vertices $u$ and $v_0$ are distinct.
Since $Q$ is a bad 2-chord, $v_2$ is an $L2$-vertex and hence $u$ is an $L3$-vertex.
Since $F$ has no good $L3$-vertices, there is a bad 2-chord $Q' = uu_1u_2$ of $F$ where
$u_2$ is an L2-vertex. 
Since $G$ is triangle-free, $u_1 \neq v_1$. 
Therefore, $Q'$ is contained in $G_2$ and the $Q'$-component not containing $P$ is properly contained within $G_2$, contradicting our extremal choice.
\end{proof}

Let $v_0v_1v_2$ be a path in $F$ where $v_1$ is a good vertex. 
There exists a color $c$ in $L(v_1)$ that does not appear in $L(v_0) \cup L(v_2)$.
We will color $v_1$ with $c$ and extend that coloring to $G - v_1$.
Let $G' = G - v_1$, and let $L' $ be the list assignment on $V(G')$ where $L'(u) = L(u)\setminus\{c\}$ for vertices $u$ adjacent to $v_1$ in $G$, and $L'(u) = L(u)$ otherwise.

The neighbors of $v_1$ are $L'2$-vertices in $G'$, and we verify that $G'$ satisfies our hypotheses.
Since $G$ is triangle-free, the neighbors of $v_1$ form an indepenent set.
Since $v_1$ is a good vertex, the $L'2$-vertices in $G'$ form an independent set. 
By minimality of $G$, the graph $G'$ has an $L'$-coloring $\varphi$.
This $L'$-coloring $\varphi$ extends to an $L$-coloring of $G$ by assigning $\varphi(v_1) = c$.
\end{proof}


\section{Forbidding 4-cycles}

In this section, we prove Theorem~\ref{c4free} using a strengthened hypothesis.
Observe that any $(3,1)$-list assignment on a $C_4$-free planar graph satisfies the conditions of the following theorem.

\begin{theorem}\label{thm:thomassen4cycle}
Let $G$ be a $C_4$-free plane graph with outer face $F$ with a subpath $P$ of $F$ containing at most three vertices, and let $L$ be a $(*,1)$-list assignment such that the following conditions are satisfied:
\begin{enumerate}[(i)]
\item $|L(v)| \geq 3$ for $v \in V(G)\setminus V(F)$,
\item $|L(v)| \geq 2$ for $v \in V(F)\setminus V(P)$,
\item $|L(v)| = 1 $ for $v \in V(P)$,
\item no two $L2$-vertices are adjacent in $G$,
\item the subgraph induced by $V(P)$ is $L$ colorable,
\item no vertex with list of size two is adjacent to two vertices of $P$.
\end{enumerate}
Then $G$ is $L$-colorable.
\end{theorem}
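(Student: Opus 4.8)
The plan is to follow the template of the proof of Theorem~\ref{thm:thomassen3cycle}, adapted to the presence of triangles. Take a counterexample $G$ with $|V(G)|+|E(G)|$ minimum. The usual reductions apply: by minimality one may delete any edge $uv\notin E(P)$ with $|L(u)\cap L(v)|=0$, so every such edge has $|L(u)\cap L(v)|=1$; $G$ is connected; and $G$ is $2$-connected, since a cut-vertex $v$ splits $G$ into $G_1\supseteq P$ and $G_2$, and after $L$-coloring $G_1$ one sets $v$'s list to the color it received and applies minimality to $G_2$ with the one-vertex precolored path $\{v\}$ --- a single precolored vertex is vacuously admissible for (iv), (v), (vi). Hence $F$ is a cycle. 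The reason for allowing $P$ to have up to three vertices and for carrying condition~(vi) is that the recursive calls below will sometimes precolor a triangle or a short path on the boundary, and these instances must stay admissible.

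Next come the structural claims about the boundary cycle. As in Section~2, I would show $F$ has no chord: a chord $uv$ splits $G$ into its $Q$-components $G_1\supseteq P$ and $G_2$; one $L$-colors $G_1$ and extends the coloring to $G_2$ with the two-vertex precolored path $uv$, the only new point being condition~(vi) for this instance, which I would fold into a companion claim ruling out the offending configuration (an $L2$-vertex of $G_2$ adjacent to both $u$ and $v$) directly. The genuinely new feature relative to Section~2 is that $G$ may contain triangles, so I expect to need claims controlling them: that $G$ has no separating triangle (color the exterior, which contains $P$, then the interior with the triangle's three-vertex path precolored --- condition~(vi) is exactly what makes the interior instance admissible), and, more generally, claims bounding the $2$-chords and $3$-chords of $F$, say forbidding a $2$-chord $v_0v_1v_2$ or a short $3$-chord with an endpoint that is an $L2$-vertex, argued by minimizing the relevant $Q$-component exactly as in the ``good vertex'' claim of Section~2, and using $C_4$-freeness to limit which pairs of boundary vertices can be joined by such a chord.

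With the boundary cycle chordless and free of these short chords, the endgame is again to find a boundary vertex $v_1$ that is \emph{good}, delete it, remove its color from the lists of its neighbors, and invoke minimality. There are two subtleties over Section~2. First, the neighbors of $v_1$ become $L2$-vertices and must form an independent set so that (iv) survives; unlike the triangle-free case, two neighbors of $v_1$ may be adjacent (a triangle through $v_1$), so the definition of ``good'' must additionally forbid $v_1$ from lying in such a triangle, or these triangles must be disposed of by a separate claim. Second, a neighbor of $v_1$ might become adjacent to two vertices of $P$, so ``good'' must also exclude that configuration, and here the three-vertex bound on $P$ together with $C_4$-freeness restricts the possibilities enough to handle it. One then shows, as in Section~2 but tracking these extra requirements, that if $F$ has no good vertex then an extremal choice among $2$-chords produces a strictly smaller $Q$-component, a contradiction.

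I expect the main obstacle to be exactly the fact that triangles are now allowed: every structural claim and the final deletion step acquires a triangle case, and calibrating the notion of ``good vertex'' so that it is strong enough to survive deletion yet weak enough to always exist is the crux. Conditions (iv) and (vi) and the enlargement of $P$ to three vertices are the bookkeeping that makes this work, and checking that the recursive instances created when one peels off a boundary triangle still satisfy (i)--(vi), particularly (vi), is where most of the case analysis will go.
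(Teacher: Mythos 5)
Your opening reductions and structural claims track the paper closely: minimality gives $|L(u)\cap L(v)|=1$ off $P$, $2$-connectedness follows from a one-vertex-precolored recursive call, separating triangles are eliminated by coloring outside-then-inside with a three-vertex precolored path (this is exactly where (vi) and the enlargement of $P$ to three vertices are needed, as you say), and chords and short $2$-chords of $F$ are ruled out by a $Q$-component argument with a minimal-component extremal choice. All of this matches the paper's Claims on chords and $2$-chords in spirit, though the paper's case split (``bad chord'' meaning a chord cutting off a triangle $uvx$ with $|L(x)|=2$; two $2$-chord claims depending on whether the second endpoint is $L2$ or $L3$ and on whether $v_0$ is a middle vertex of $P$) is more refined than what you sketch.

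The genuine gap is the endgame. You propose to find a single ``good'' boundary $L3$-vertex $v_1$, delete it, and remove its color from its neighbors' lists, calibrating ``good'' to survive the deletion. This cannot be made to work by a good-vertex definition alone, because the obstruction is not on the boundary but in the interior: if $v_1$ lies in a triangle $v_1 u w$ with $u,w$ interior, then after deleting $v_1$ both $u$ and $w$ drop to list size $2$ and are adjacent, violating (iv) --- and such triangles through $v_1$ need not be excludable by any local ``good vertex'' notion, since $C_4$-freeness does not forbid them and the chord/$2$-chord claims only control the boundary. The paper instead does something structurally different: it first proves two additional claims, that consecutive $F\setminus P$-vertices strictly alternate between $L2$ and $L3$ (Claims~\ref{cl:L2vertex} and \ref{cl:L3vertex}, the second of which also pins down which $L2$-vertices can neighbor a boundary $L3$-vertex), and it uses a secondary minimality criterion (minimum total list size) to normalize $|V(P)|\geq 1$. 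Then, rather than deleting a single vertex, it selects a set $X\subseteq\{v_2,v_3,v_4\}$ of boundary vertices adjacent to $P$ together with a carefully chosen partial coloring $\varphi$ of $X$ (cases (X1a), (X1b), (X2), branching on whether $v_2v_4$ is a bad chord and on common neighbors of consecutive $v_i$), precisely so that the colors $\varphi$ removes from interior neighbors never create two adjacent $L'2$-vertices: the choice of $\varphi(v_2),\varphi(v_3),\varphi(v_4)$ is rigged so that any common interior neighbor keeps list size at least $3$, and $C_4$-freeness ensures at most one vertex has two neighbors in $X$. Your proposal is silent on both the alternation structure and the multi-vertex deletion with a tuned $\varphi$, and these are exactly where the triangles-plus-$C_4$-freeness interaction is resolved; without them, the inductive step fails.
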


\begin{proof}
Let $G$ be a counterexample where $|V(G)| + |E(G)|$ is as small as possible.
Moreover, we assume that the sum of the sizes of the lists is also as small as possible
subject to the previous condition.
By the minimality of $G$, we assume that for every edge $uv \in E(G)\setminus E(P)$, $|L(u) \cap L(v)| = 1$.
If otherwise, then we can remove the edge $uv$ to obtain an $L$-coloring of $G - uv$, which is also an $L$-coloring of $G$.
It is also clear that $G$ is connected.

Moreover, we show $G$ is 2-connected. 
Suppose $v$ is a cut-vertex of $G$.
There exist nontrivial connected induced subgraphs $G_1$ and $G_2$ such that $G_1 \cup G_2 = G$ and $V(G_1) \cap V(G_2) = \{v\}$. 
Suppose $P$ is contained within exactly one of $G_1$ or $G_2$; by symmetry $P \subseteq G_1$.
By the minimality of $G$, there exists an $L$-coloring $\varphi$ of $G_1$.
Let $L'$ be the list assignment on $V(G_2)$  where $L'(u) = L(u)$ if $u\neq v$ and $L'(v) = \{\varphi(v)\}$.
By the minimality of $G$, there exists an $L'$-coloring of $G_2$ and this coloring combined with $\varphi$ gives an $L$-coloring of $G$.
When $P$ is not contained within only one of $G_1$ or $G_2$, we have $v \in V(P)$.
By the minimality of $G$, both $G_1$ and $G_2$ are $L$-colorable and these colorings agree on $v$ which gives an $L$-coloring of $G$.

In Claims~\ref{firstclaimc4} through \ref{lastclaimc4}, we determine certain structural properties of our counterexample $G$.
A vertex $v$ is a \emph{middle vertex} if it has degree two in $P$.
Observe that since $G$ is $C_4$-free, any two vertices have at most one common neighbor.

\begin{claim}\label{cl:septriangle}
\label{firstclaimc4}
$G$ does not contain 
a triangle with nonempty interior.
\end{claim}

\begin{proof}
Suppose not and assume $T=pqr$ is a 
triangle with nonempty interior in $G$.
Let $G_1= \cex_T(G)$ and $G_2 = \cin_T(G)$. 
Since $T$ has nonempty interior, $|V(G_1)| < |V(G)|$, and there exists an $L$-coloring $\varphi$ of $G_1$.
Let $G'$ be obtained from $G_2$ by removing the edge $rp$ and let $L'$ be a list assignment on $V(G')$ where $L'(v) = \{\varphi(v)\}$ if $v \in \{p,q,r\}$ and $L'(v) = L(v)$ otherwise.
The hypothesis applies to $G'$ and $L'$ with $pqr$ as the path on three precolored vertices on the outer face of $G'$.
Since $|E(G')| <  |E(G)|$, there exists an $L'$-coloring $\psi$ of $G'$ which combined with $\varphi$ forms an $L$-coloring of $G$.
This contradicts $G$ being a counterexample.
\end{proof}

If $|V(F)| \leq 4$, then $|V(F)| = 3$ since $G$ contains no 4-cycles.
By Claim~\ref{cl:septriangle}, $G=F$ and it is easy to check 
Theorem~\ref{thm:thomassen4cycle} for graphs with at most three vertices.
Thus, $|V(F)| \geq 5$.

A chord $Q=uv$ is \emph{bad} if one of the $Q$-components is a triangle $uvx$ where $|L(x)|=2$.
Otherwise, the chord $Q$ is \emph{good}.

\begin{claim}\label{claim:chord}\label{claim:chordnospecial}
$F$ contains only bad chords.
\end{claim}

\begin{proof}
For a good chord $Q=uv$, let $G_1$ and $G_2$ be the $Q$-components such that $|V(G_1)\cap V(P)| \geq |V(G_2)\cap V(P)|$.
If $F$ contains a good chord, select a good chord $Q$ that minimizes $|V(G_2)|$.
Since $Q$ is good, the vertices $u$ and $v$ are at distance at least three apart
in the path $F \cap G_2$.
Assume $v \notin V(P)$.

By the minimality of $G$, there exists an $L$-coloring $\varphi$ of $G_1$.
Let $L'$ be the list assignment on $V(G_2)$ where $L'(x) = \{\varphi(x)\}$ if $x\in\{u,v\}$ and $L'(x) = L(x)$ otherwise.
Since $uv$ is a chord, 
Since $G_2$ contains fewer vertices of $P$ than $G_1$, the graph $G_2$ has at most three $L'1$-vertices, and they form a path of length at most two on the outer face of $G_2$.

Since we only changed the lists on $u$ and $v$ in $G_2$, the $L'2$-vertices remain an independent set.
The only condition that remains to be verified is that every $L'2$-vertex in $G_2$ has at most one $L'1$-neighbor.

Suppose there exists an $L'2$-vertex $x \in V(G_2)$ adjacent to two $L'1$-vertices.
Since $x$ is not adjacent to two $L2$-vertices, one of these vertices must be $v$, which is not an $L1$-vertex.
Since $G$ has no 4-cycles, these two $L'1$-vertices must be adjacent, so $x$ is adjacent to $u$ and $v$.
Since $|L(x)|=2$, if either $ux$ or $vx$ is a chord, then it must be a good chord, so this contradicts the choice of $Q$. 
Hence both $ux$ and $vx$ are edges of $F$.
Moreover, Claim~\ref{cl:septriangle} implies that $G_2$ is exactly the triangle $uvx$,
which contradicts that $Q$ is a good chord.

Hence there exists an $L'$-coloring $\psi$ of $G_2$ that agrees with $\varphi$ on $Q$, and these  colorings together form an $L$-coloring of $G$.
\end{proof}

\begin{claim}\label{claim:chordP}
$F$ contains only bad chords $uv$ where $u,v \not\in V(P)$.
\end{claim}
\begin{proof}
Suppose for a contradiction that $uv$ is a bad chord and $u \in V(P)$.
Let $z \in V(F)$ be a common neighbor of $u$ and $v$ forming the bad chord.
Since $|L(u)| = 1$, $L(u) \subset L(v)$ and  $L(u) \subset L(z)$.
Hence $L(v) \cap L(z) = L(u)$. 
By the minimality of $G$, there exists an $L$-coloring of $G-vz$.
However, it is also an $L$-coloring of $G$.
\end{proof}

A 2-chord $Q=v_0v_1v_2$ of a cycle $K$ is \emph{separating} if $v_0v_2 \not\in E(K)$.
We now eliminate the possibility of $F$ containing certain separating $2$-chords.

\begin{claim}\label{claim:2chord}
$F$ does not contain a separating $2$-chord $v_0v_1v_2$ where $|L(v_2)| = 2$ and $v_0$ is not a middle vertex.
\end{claim}
\begin{proof}
For a separating 2-chord $Q = v_0v_1v_2$ where $v_2$ is an $L2$-vertex and $v_0$ is not a middle vertex, let $G_1$ and $G_2$ be the $Q$-components of $G$ where $G_1$ contains the vertices of $P$.
If such a 2-chord exists, select $Q$ to minimize $|V(G_2)|$.

By the minimality of $G$, there exists an $L$-coloring $\varphi$ of $G_1$.
Let $L'$ be the list assignment on $G_2$ where $L'(v_i) = \{\varphi(v_i)\}$ for $i \in \{0,1,2\}$ and $L'(x) = L(x)$ for $x \in V(G_2) \setminus V(Q)$.
The $L'1$-vertices of $G_2$ are exactly $v_0$, $v_1$, and $v_2$.

Since the $L'2$-vertices are also $L2$-vertices, the hypothesis holds for $G_2$ and $L'$ as long as every $L'2$-vertex in $G_2$ has at most one neighbor in $Q$.
Since $v_2$ in an $L2$-vertex it is not adjacent to any other $L2$-vertices.
If some $L'2$-vertex $x$ is adjacent to both $v_1$ and $v_0$, then the separating 2-chord $v_2v_1x$ contradicts our extremal choice of $Q$.

Hence by the minimality of $G$ there exists an $L'$-coloring $\psi$ of $G_2$ which agrees with $\varphi$ on $Q$ and together these colorings form an $L$-coloring of $G$.
\end{proof}

\begin{claim}\label{claim:2chordP}
\label{lastclaimc4}
$F$ does not contain a separating $2$-chord $v_0v_1v_2$ where $|L(v_2)| = 3$, $v_0\in V(P)$, and $v_0$ is not a middle vertex.
\end{claim}
\begin{proof}
Suppose there exists a separating 2-chord $Q=v_0v_1v_2 \subset G$ where $|L(v_2)| = 3$, $v_0 \in V(P)$, and $v_0$ is not a middle vertex.
Let $G_1$ and $G_2$ be the $Q$-components of $G$ where $G_1$ contains the vertices of $P$.

By the minimality of $G$, there exists an $L$-coloring $\varphi$ of $G_1$.
Let $L'$ be the list assignment on $G_2$ such that $L'(v_i) =\{ \varphi(v_i) \}$ for $i\in\{0,1,2\}$ and $L'(x) = L(x)$ for $x \in V(G_2) \setminus V(Q)$. 
The $L'1$-vertices in $G_2$ are exactly those in $Q$.

Since all $L'2$-vertices in $G_2$ are also $L2$-vertices, we must verify that every $L'2$-vertex in $G_2$ has at most one neighbor in $Q$.
If an $L'2$-vertex  $u$ has two neighbors, then one of them must be $v_1$ since $G$ is $C_4$-free. 
However, at least one of the 2-chords $v_0v_1u$ or $v_2v_1u$ is separating and contradicts Claim~\ref{claim:2chord}.

Hence there exists an $L'$-coloring $\psi$ of $G_2$ which agrees with $\varphi$ on $Q$ and together these colorings form an $L$-coloring of $G$.
\end{proof}

Our investigation of chords and 2-chords is complete.
We now investigate the lists of adjacent vertices along the outer face in Claims~\ref{cl:L2vertex} and \ref{cl:L3vertex}.

\begin{claim}\label{cl:L2vertex}
If $v_{0}v_1v_{2}$ is a path in $F$ where $|L(v_1)| = 2$,
then $L(v_1) \cap L(v_0) \neq L(v_1) \cap L(v_2)$.
\end{claim}
\begin{proof}
Suppose that there exists a path $v_0v_1v_2$ in $F$ where $L(v_1) = \{a,b\}$ and $L(v_1) \cap L(v_0) = L(v_1) \cap L(v_2) = \{a\}$.
We will find an $L$-coloring of $G$ where $v_1$ is assigned the color $b$.

Let $L'$ be the list assignment on $G-v_1$ where $L'(u) = L(u) \setminus\{b\}$ if $uv_1 \in E(G)$ and $L'(u) = L(u)$ otherwise.
Let $G'$ be obtained from $G-v_1$ by removing edges between $L'2$-vertices with disjoint lists.
We will verify that $G'$ and $L'$ satisfy the hypothesis.

If $u$ is a neighbor of $v_1$ with $b \in L(u)$, then $u$ is not in $F$ since by Claim~\ref{claim:chord} $G$ contains no chord $uv_1$.
Hence, the vertices that had the color $b$ removed are now $L'2$-vertices, all $L'2$-vertices are on the outer face of $G'$, and the $L'1$-vertices are exactly the vertices in $P$.

It remains to show that the $L'2$-vertices are independent in $G'$ and no $L'2$-vertex has two neighbors in $P$.
The $L2$-vertices in $G$ still form an independent set in $G'$.
The $L'2$-vertices that are neighbors of $v_1$ form an independent set since their $L'$-lists are pairwise disjoint (their $L$-lists previously contained $b$ and cannot share more colors). 
If an $L2$-vertex $u$ is adjacent to an $L'2$-vertex $x$ that is a neighbor of $v_1$, then since $u \notin \{v_0, v_2\}$, the path $uxv_1$ is a separating 2-chord contradicting Claim~\ref{claim:2chord}.
Similarly, if a neighbor $x$ of $v_1$ is adjacent to two vertices $u_0,u_1$ of $P$, then
at least one of them, say $u_1$, is not a middle vertex, and when $u_1 \notin \{v_0,v_2\}$ the path $v_1xu_1$ is a separating 2-chord contradicting Claim~\ref{claim:2chord}.
If $u_1 \in \{v_0,v_2\}$, then since $G$ contains no 4-cycles, the vertices $u_0$ and $u_1$ are adjacent and $v_1xu_0u_1$ is a 4-cycle.

Thus the hypothesis holds on $G'$ and $L'$, so by the minimality of $G$ there exists an $L'$-coloring $\varphi$ of $G'$ which extends to an $L$-coloring of $G$ with $\varphi(v_1) = b$.
\end{proof}

\begin{claim}\label{cl:L3vertex}
If $v_{0}v_1v_{2}$ is a path in $F$ where $|L(v_1)| = 3$, then $v_0$ and $v_2$ are $L2$-vertices, and the only $L2$-vertices adjacent to $v_1$.
\end{claim}
\begin{proof}
For a path $v_0v_1v_2$ where $|L(v_1)| = 3$, we consider how many of $v_0$ and $v_2$ are $L2$-vertices.

Suppose that neither $v_0$ nor $v_2$ is an $L2$-vertex. 
By Claim~\ref{claim:chord}, $G$ contains no good chord, and $G$ contains no bad chord $v_1u$  since $v_0$ and $v_2$ are not $L2$-vertices.
Thus, all neighbors of $v_1$ other than $v_0$ and $v_2$ are $L3$-vertices.
Select a color $a \in L(v_1)$ and let $L'$ be the list assignment on $G$ where $L'(z)=L(z)$ for $z \in V(G) \setminus \{v_1\}$ and $L'(v_1)=L(v_1)\setminus\{a\}$.
If $v_1$ is adjacent to two vertices of $P$, they are $v_0$ and $v_2$, and $F = P\cup\{v_1\}$.
This contradicts that $|V(F)| \geq 5$.
Thus, the hypothesis holds on $G$ with lists $L'$ and by the minimality of $G$ guarantees an $L'$-coloring of $G$, which is an $L$-coloring of $G$.

Now suppose that $v_2$ is an $L2$-vertex and $v_0$ is not.
By Claim~\ref{claim:chord}, $G$ contains no good chord, and if $G$ contains a bad chord $v_1u$ it is with a triangle $v_1uv_2$, and we can write $u = v_3$ as the other neighbor of $v_2$ on $F$; in this case, $v_3$ is an $L3$-vertex since it is adjacent to $v_2$.
Thus, all neighbors of $v_1$ other than $v_0$ and $v_2$ are $L3$-vertices.

Let $a$ be the color in $L(v_1) \cap L(v_2)$.
Let $G'$ be obtained from $G$ by removing the edge $v_1v_2$ and $L'$ be the list assignment where $L'(v_1) = L(v_1) \setminus \{a\}$ and $L'(x) = L(x)$ for $x \in V(G) \setminus \{v_1\}$.
Since the only $L2$-vertex adjacent in $G$ to $v_1$ is $v_2$, and they are not adjacent in $G'$, the $L'2$-vertices form an independent set in $G'$. 
Moreover, Claim~\ref{claim:chordP} implies that $v_1$ has at most one neighbor in $P$.
Hence $G'$ satisfies the hypothesis, and by the minimality of $G$ there exists an $L'$-coloring of $G'$. 
By the construction of $L'$ and $G'$,
$\varphi$ is also an $L$-coloring of $G$.

Thus, for a path $v_0v_1v_2$ in $F$ with $v_1$ an $L3$-vertex, $v_0$ and $v_1$ are both $L2$-vertices.
Since every bad chord $v_1u$ has $u$ adjacent to $v_0$ or $v_2$, the vertex $u$ is an $L3$-vertex.
Thus Claim~\ref{claim:chord} implies that $v_0$ and $v_2$ are the only $L2$-vertices adjacent to $v_1$.
\end{proof} 

By the minimality of the sum of the sizes of the lists, we can assume that $|V(P)| \geq 1$ by removing colors if necessary.
Let $p_0v_1v_2v_3\dots v_tv_{t+1}\dots$ be vertices of $F$ in cyclic order where $p_0 \in V(P)$, $\{ v_1,\dots,v_t\} = V(F)\setminus V(P)$, and thus $v_{t+1} \in V(P)$.

Claims~\ref{cl:L2vertex} and \ref{cl:L3vertex} together imply that for all $i \in \{1,\dots,t\}$, the vertex $v_i$ is an $L2$-vertex when $i$ is odd; otherwise $v_i$ is an $L3$-vertex.
Furthermore, $v_t$ is an $L2$-vertex, so $t$ is odd.

Select a set $X \subseteq \{v_2,v_3,v_4\}$ and a partial $L$-coloring $\varphi$ of $X$ by the following rules:
\begin{itemize}
\item[(X1)] If $v_2v_4$ is not a bad chord, then let $c \in L(v_2)\setminus(L(v_1)\cup L(v_3))$ and:
\begin{itemize}
\item[(X1a)]
If there is no common neighbor $w$ of $v_2$ and $v_3$ such that $c \in L(v_2)\cap L(w)$,
then let $X = \{v_2\}$ and $\varphi(v_2) = c$.
\item[(X1b)]
If there is a common neighbor $w$ of $v_2$ and $v_3$ 
such that $c \in L(v_2)\cap L(w)$, then let 
$X = \{v_2,v_3\}$, $\varphi(v_2) = c$,
and $\varphi(v_3) = b$ where $b$ is the unique color in $L(v_3) \setminus L(v_4)$.
\end{itemize}
\item[(X2)] If $v_2v_4$ is a bad chord, then let $X = \{v_2,v_3,v_4\}$. 
If $v_4$ and $v_5$ have a common neighbor $w$, then let $\varphi(v_4) \in L(v_4) \setminus (L(v_5)\cup L(w))$; otherwise let $\varphi(v_4) \in L(v_4) \setminus L(v_5)$.
Finally, select $\varphi(v_2) \in L(v_2)\setminus (L(v_1) \cup \{\varphi(v_4)\})$ and $\varphi(v_3) \in L(v_3)\setminus \{\varphi(v_4)\}$
such that $\varphi(v_2) \neq \varphi(v_3)$ 
\end{itemize}
Observe that $X$ and $\varphi$ are well-defined, since there is always a choice for $\varphi$ satisfying those rules.
See Figure~\ref{fig-defineX} for diagrams of these cases.

\begin{figure}[htp]
\begin{center}
 \includegraphics{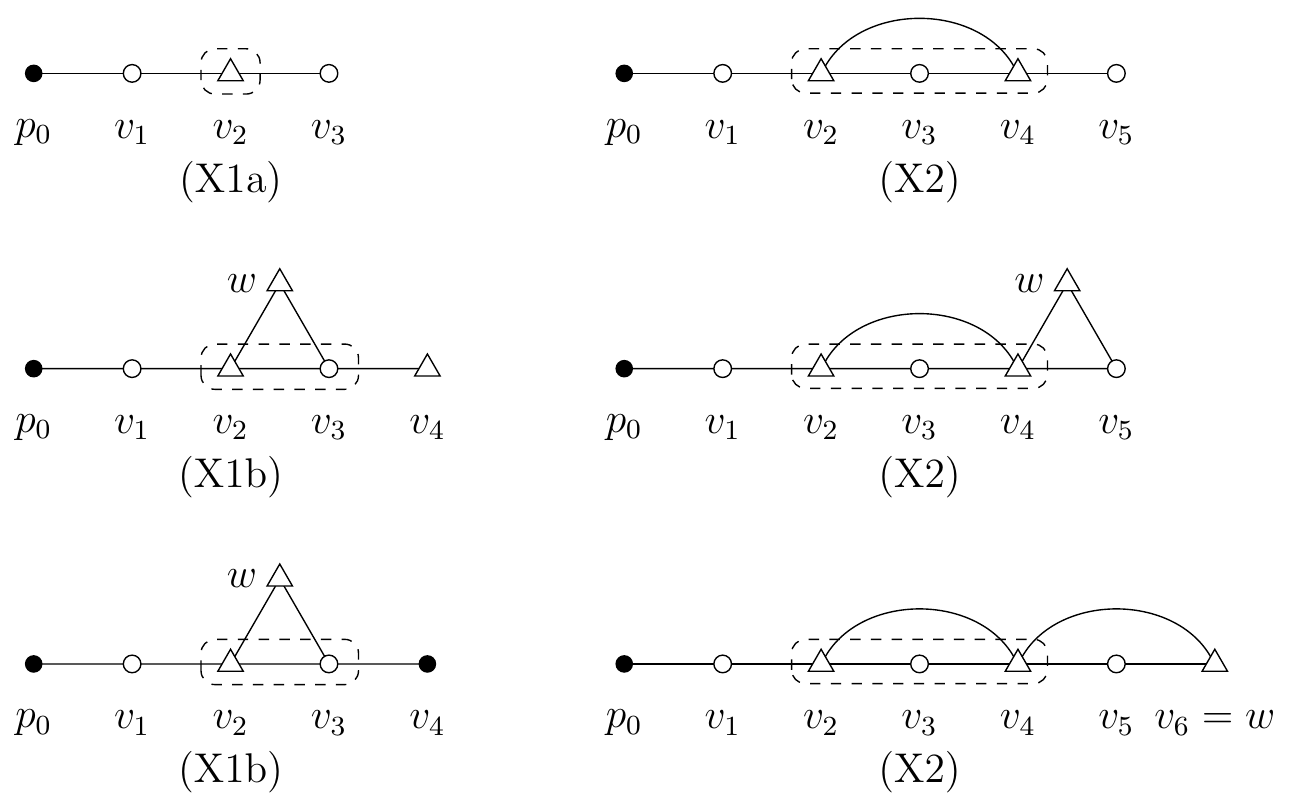}
\end{center}
\caption{Cases (X1) and (X2). A black circle is an $L1$-vertex, a white circle is an $L2$-vertex, and a triangle is an $L3$-vertex. The dashed box indicates $X$.}\label{fig-defineX}
\end{figure}

Let $L'$ be a list assignment on $G - X$ where 
\[ L'(v) = L(v) \setminus \{ \varphi(x) :  x \in X \text{ and } xv \in E(G)\}\]
for all $v \in V(G)\setminus X$.
Let $G'$ be obtained from $G-X$ by removing edges among vertices
with disjoint $L'$-lists except the edges of $P$.

Below, we verify that $G',L'$, and $P$ satisfy the assumptions of Theorem~\ref{thm:thomassen4cycle}. 
Then by the minimality of $G$, there is an $L'$-coloring $\psi$ of $G'$.
By the definition of $L'$, the colorings $\varphi$ and $\psi$ together form an $L$-coloring of $G$, a contradiction.

Let $N$ be the set of vertices $u$ where $|L(u)| > |L'(u)|$. 
Necessarily, every vertex of $N$ has a neighbor in $X$.
Observe that $X$ and $\varphi$ are chosen such that $L(u) = L'(u)$  for all $u \in V(F) \setminus X$.
Hence $N \subseteq V(G)\setminus V(F)$ and every vertex in $N$ is an $L3$-vertex.

Since $G$ is $C_4$-free, any pair of vertices has at most one common neighbor.
When $|X| = 3$, we are in the case (X2), and the chord $v_2v_4$ implies that no vertex in $N$ is adjacent to $v_3$, and a vertex adjacent to $v_2$ and $v_4$ would form a 4-cycle with $v_3$.
When $|X| = 2$, there is at most one vertex in $N$ having two neighbors in $X$.
This is possible only in the case (X1b), and the colors $\varphi(v_2)$ and $\varphi(v_3)$ are  chosen so that the common neighbor is an $L'2$- or $L'3$-vertex. 
Therefore $|L'(v)| \geq 2$ for every vertex $u \in N$.

If two vertices $x, y \in N$ are adjacent in $G'$, the color $c \in L(x) \cap L(y)$ is also in $L'(x) \cap L'(y)$ and hence the colors $a \in L(x) \setminus L'(x)$ and $b \in L(y) \setminus L'(y)$ are distinct.
Thus, $x$ is adjacent to some $v_i \in X$ where $\varphi(v_i) = a$, and $y$ is adjacent to some $v_j \in X$ where $\varphi(v_j) = b$.
In every case above, any two distinct vertices in $X$ that have neighbors not in $X$ are also adjacent, so $xv_iv_jy$ is a 4-cycle.
Thus, $N$ is an independent set.

Suppose that there is an edge $uv \in E(G')$ where $u \in N$ and $v \in V(F)\setminus X$ where $|L'(v)| = |L(v)| = 2$. 
If the 2-chord $xuv$ is separating, we find a contradiction by Claim~\ref{claim:2chord}.
If the 2-chord is not separating, then $x$ and $v$ are consecutive in $F$, and exactly one is in $X$.

First, we consider the case when $xuv = v_2uv_1$. 
If $L(v_1) \cap L(u) = L(p_0)$, then the edge $v_1u$ does not restrict the colors assigned to $v_1$ and $u$ by an $L$-coloring, so $G$ is not minimal; thus $L(v_1) \cap L(u) \neq L(p_0)$.
Hence the vertices $v_1$, $u$, and $v_2$ all share a common color, and this color was not removed from the list $L(u)$, so $|L'(u)| = 3$.

When $xuv \neq v_2uv_1$, then $xuv = v_iuv_{i+1}$, where $i$ is maximum such that $v_i \in X$.
However, the cases (X1a), (X1b), and (X2) all consider whether $v_i$ and $v_{i+1}$ have a common neighbor, and avoid using any color in common
if $v_{i+1}$ is an $L2$-vertex.
Therefore, $u$ is an $L'3$-vertex, so the $L'2$-vertices in $G'$ form an independent set.

Finally, we verify that no $L'2$-vertex in $G'$ has two neighbors in $P$.
Since $G'$ is obtained from $G$ by deletions of edges and vertices, it suffices to check the condition only for vertices in $N$.
If $v \in N$ has a neighbor $x \in X$, then $v$ is not adjacent to two vertices of $P$ by Claims~\ref{claim:2chord} and~\ref{claim:2chordP}
and $G$ being $C_4$-free.

Therefore, $G',L'$, and $P$ satisfy the assumptions of Theorem~\ref{thm:thomassen4cycle}.
\end{proof}


\section{Forbidding 5- and 6-cycles}

The goal of this section is to prove Theorem~\ref{c56free}.
We prove a slightly stronger statement.

\begin{theorem}~\label{thm:c5c6_up}
Let $G$ be a plane graph without $5$- or $6$-cycles and let $p \in V(G)$.
Let  $L$ be a $(*,1)$-list assignment such that 
\begin{itemize}
 \item $|L(p)| = 1$,
 \item $|L(v)| = 3$ for $v \in V(G)-p$.
\end{itemize}
Then $G$ is $L$-colorable.
\end{theorem}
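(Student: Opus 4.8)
The plan is to prove Theorem~\ref{thm:c5c6_up} by minimal counterexample combined with a discharging argument, following the template established by Thomassen-style proofs and mirroring the structure used for Theorems~\ref{thm:thomassen3cycle} and~\ref{thm:thomassen4cycle}. First I would take $G$ to be a counterexample minimizing $|V(G)|+|E(G)|$, and as usual argue that $G$ is connected, that every edge $uv$ satisfies $|L(u)\cap L(v)|=1$ (otherwise delete the edge), and that $G$ is $2$-connected so that its outer face is bounded by a cycle $F$; in fact one wants to push harder and show $F$ has no chords (splitting along a chord and extending each piece using the single precolored vertex or a common color). The precolored vertex $p$ should be placed on the outer face. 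Then I would establish the standard reducible-configuration lemmas: no separating triangle with nonempty interior (split and recurse), tight control on short chords and $2$-chords of $F$, and local structural facts saying that near $p$ and along $F$ the colorings can always be propagated inward. The essential new ingredient compared with the $C_4$-free case is that forbidding $5$- and $6$-cycles (but allowing $4$-cycles and triangles) forces a different sparsity profile, so the list of reducible configurations will be longer and more delicate — e.g., short faces sharing edges, or low-degree vertices clustered together — each handled by the same "color a small set $X$ on or near $F$, delete it, clean up edges between vertices with now-disjoint lists, and invoke minimality" maneuver.

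After the reducibility phase, I would set up discharging on the reduced structure. Assign each vertex $v$ the charge $\deg(v)-4$ and each face $f$ the charge $|f|-4$; by Euler's formula the total charge is $-8$ (with the usual small correction for the outer face). Since there are no $5$- or $6$-cycles, every face other than the outer face has length $3$, $4$, or at least $7$, so faces of length $\ge 7$ are the charge donors and triangles/$4$-faces together with low-degree vertices are the recipients. The reducibility lemmas should guarantee that $2$-, $3$-vertices are sparse and well-separated and that $3$- and $4$-faces cannot cluster too tightly, so one can design discharging rules (large faces send fixed amounts across their incident edges/vertices, big vertices send to incident small faces) that make every vertex and every internal face end with nonnegative charge while the outer face absorbs only a bounded deficit, contradicting the total being $-8$. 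I would keep the rules as simple as possible — ideally only faces-to-vertices and large-faces-to-small-faces — and verify the final count face by face and vertex by vertex using the structural claims.

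The main obstacle I expect is the discharging analysis itself: because $4$-cycles and triangles are both permitted, the reduced graph can still be locally dense, and the "jump" from face length $6$ to $7$ means a $7$-face has only $3$ units of surplus to spread among up to seven incident small faces/vertices, which is tight. Making the rules balance will require exactly the right set of reducible configurations — in particular I anticipate needing lemmas that forbid two $3$-faces sharing an edge with a common low-degree vertex, forbid a $3$-vertex incident to two $\le 4$-faces, and control adjacent $4$-faces — and each such lemma must be proved reducible by a careful local recoloring that respects the $(*,1)$ separation condition (the constraint $|L(x)\cap L(y)|\le 1$ is what makes "pick a color outside the neighbors' lists" arguments work, and one must always check the separation is preserved after deleting $X$ and cleaning up edges). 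A secondary subtlety is handling the precolored vertex $p$ and the outer face cleanly throughout, so that every recursive call still has exactly one vertex of list size $1$ and all others of size $3$; this is why the theorem is stated with a single precolored vertex rather than a precolored path, and I would exploit that simplicity rather than fight it.
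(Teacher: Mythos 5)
Your outline follows the same broad strategy as the paper (minimal counterexample, a list of reducible configurations, then discharging), but as written it has two concrete problems. First, the Thomassen-style phase you propose --- placing $p$ on the outer face and eliminating chords and $2$-chords of $F$ by splitting and recursing --- does not work under the induction hypothesis you have available: after splitting along a chord $uv$ and coloring the piece containing $p$, the other piece has \emph{two} precolored vertices, which Theorem~\ref{thm:c5c6_up} (one vertex of list size $1$, all others of size $3$) cannot absorb, and unlike Theorems~\ref{thm:thomassen3cycle} and~\ref{thm:thomassen4cycle} there is no precolored-path hypothesis to fall back on. The paper never needs this phase: it only uses $2$-connectivity (so that faces are bounded by cycles, hence there are no $5$- or $6$-faces), minimum degree $3$ off $p$, and then purely local lemmas that exploit the $(*,1)$ condition through the unique common color $c(e)$ on each edge (Lemma~\ref{lem:vcolors}, the trail lemma~\ref{lem:trail}, and Lemma~\ref{lem:lowtriangle}); the vertex $p$ plays no geometric role and is handled only by giving it charge $2d(p)$ and rule (R0).

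Second, and more importantly, everything you defer ``to be designed'' is exactly the content of the proof. The paper needs an explicit list of sixteen configurations (C\ref{conf:pC3})--(C\ref{conf:last}), including compound ones generated by an iterative gluing construction (Lemma~\ref{glue}) that replaces a pendant edge of a reducible configuration by a bad $3$-face, and each reducibility proof hinges on the $c(e)$-structure rather than on the generic ``color a small set $X$ near $F$ and clean up'' maneuver you describe. Likewise the discharging is delicately tuned: the paper uses $\mu(v)=2d(v)-6$ and $\mu(f)=d(f)-6$ (so low vertices are neutral and only small faces need feeding), transfers of $\tfrac{1}{7}$ from faces of length at least $7$ with redirection rules (R6)--(R7), and a case analysis over connected groups of $3$-faces (facial $K_4$'s, diamonds, isolated triangles). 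Your proposed normalization $d(v)-4$, $d(f)-4$ makes $3$-vertices charge-negative as well, which changes the bookkeeping substantially, and your guess at the needed configurations (e.g.\ ``adjacent $4$-faces'', which cannot occur at all since two $4$-faces sharing an edge force a $6$-cycle) shows the configuration list has not actually been identified. Since you yourself flag the discharging balance as the expected obstacle, the proposal is a reasonable plan but not a proof: the missing configuration catalogue, their reducibility arguments, and the verified rule set constitute the genuine gap.
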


This strengthening allows us to assume that a minimum counterexample is 2-connected, since we can iteratively color a graph by its blocks using at most one precolored vertex at each step.

Our proof uses a discharging technique.
In Section~\ref{subsec:config}, we define a family of \emph{prime graphs} and prove in Section~\ref{subsec:reduce} that a minimum counterexample is prime.
The proof is then completed in Section~\ref{subsec:discharge}, where we define a discharging process and prove that prime graphs do not exist, and hence a minimum counterexample does not exist.

\subsection{Configurations}\label{subsec:config}

We introduce some notation for a plane graph $G$.
Let $V(G)$, $E(G)$, and $F(G)$ be the set of vertices, edges, and faces, respectively.
For $v\in V(G)$, let $d(v)=|N(v)|$ where $N(v)$ is the set of vertices adjacent to $v$. 
For $f\in F(G)$, let $d(f)$ be the length of $f$. 

For a $C_5$- and $C_6$-free plane graph, the subgraph of the dual graph induced by the 3-faces has no component with more than three vertices.
A \emph{facial $K_4$} is a set of three pairwise adjacent 3-faces.
We say four vertices $xz_1yz_2$ form a \emph{diamond} if $xz_1yz_2$ is a 4-cycle formed by two adjacent 3-faces $xyz_i$ for $i \in \{1,2\}$.
If a 3-face is not adjacent to another 3-face, then it is \emph{isolated}.

A vertex is \emph{low} if it has degree three; otherwise it is \emph{high}.
A 3-face is \emph{bad} if it is incident to a low vertex; otherwise it is \emph{good}.
A face is \emph{small} if it has length three or four.
A face is \emph{large} if it has length at least seven.
A 4-face is \emph{special} if is is incident to $p$ and \emph{normal}
otherwise.

\begin{definition}\label{def:prime}
For a plane graph $G$, a list assignment $L$ from Theorem~\ref{thm:c5c6_up}, and $p\in V(G)$ with $|L(p)|=1$, the pair $(G,L)$ is \emph{prime} if 
\begin{itemize}
\item $G$ is 2-connected

\item $d(v) \geq 3$ for every $v \in V(G)- p$

\item $d(p) \geq 2$
\end{itemize}

and in addition $(G,L)$ contains none of the configurations (C\ref{conf:pC3})--(C\ref{conf:last}) below:

\begin{enumerate}[(C1)]

\item\label{conf:pC3} A $3$-face containing $p$.

\item\label{conf:lowface} A normal $4$-face where all incident vertices are low.

\item\label{conf:c3-1high} A 3-face incident to at most one high vertex.

\item\label{conf:baddiamond} A diamond  $xz_1yz_2$ where $d(z_1) = d(z_2) =3$ and $d(x)=d(y) = 4$.

\item\label{conf:crown} A diamond  $xz_1yz_2$ where $d(y) = 4$ and $d(x) = 3$.

\item\label{conf:facialk4} 
A facial $K_4$ $wxyz$ where $w$ is the internal vertex, and at least one of $x$, $y$, and $z$ has degree at most $4$.
\label{conf:lastsimple}

\begin{figure}[tp]
\begin{center}
\includegraphics{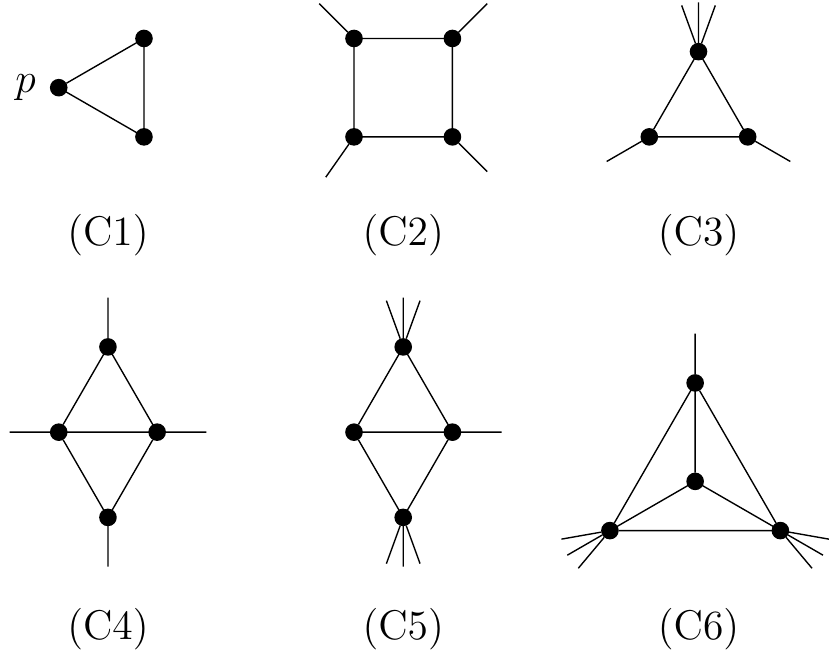}
\caption{Simple reducible configurations.}
  \label{fig-reducible}
\end{center}
\end{figure}

\begin{figure}[tp]
\begin{center}
\includegraphics{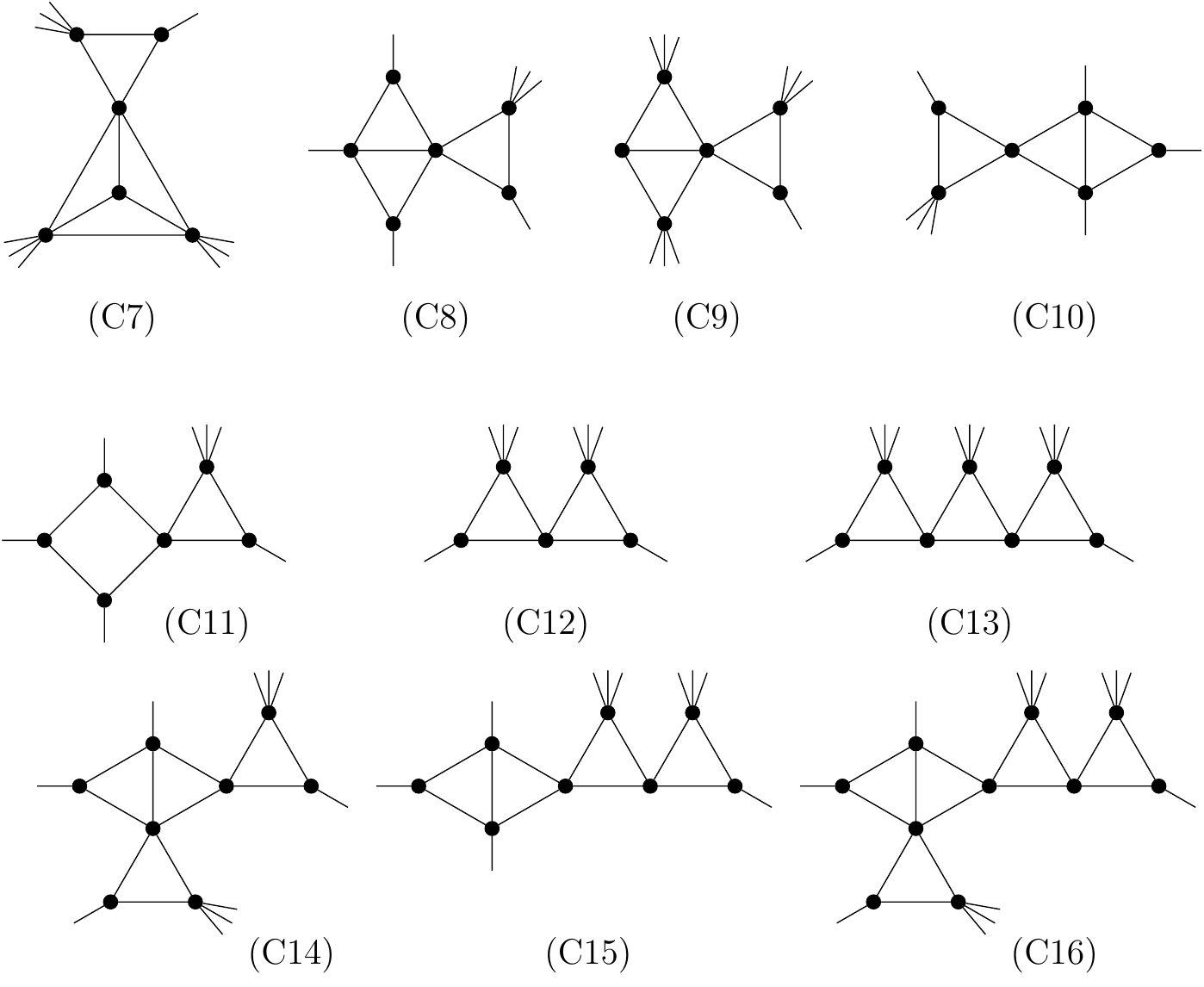}
\caption{Compound reducible configurations.}
  \label{fig-reducible56plus}
\end{center}
\end{figure}

\item\label{conf:facialk4c3} A facial $K_4$ $wxyz$ where $w$ is the internal vertex, the vertex $z$ has degree exactly five, and the other two neighbors $u,v$ of $z$ bound a bad 3-face $zvu$.
\label{conf:firstcompound}

\item\label{conf:diamondc3a} A diamond  $xz_1yz_2$ where $d(z_1)=d(z_2) = 3$, $d(x) = 4$, $d(y) = 5$, and the other two neighbors $u,v$ of $y$ form a bad 3-face $yvu$.

\item\label{conf:diamondc3b} A diamond  $xz_1yz_2$ where $d(x) = 3$, $d(y) = 5$, and the other two neighbors $u,v$ of $y$ form a bad 3-face $yvu$.

\item\label{conf:diamondc3c} A diamond  $xz_1yz_2$ where $d(z_2) = 3$, $d(x) = d(y) = d(z_1) = 4$, and the other two neighbors $u,v$ of $z_1$ form a bad 3-face $z_1vu$.

\item\label{conf:squareplusc3} A bad 3-face $xyz$ and a normal 4-face $wuvx$ where $d(x)=4$ and $x$ is the only high vertex incident to the 4-face.

\item\label{conf:c3c3} Two 3-faces $xyz$ and $xuv$ where $d(x)=4$ and $d(y) = d(v) = 3$.

\item\label{conf:threec3s} 
Three 3-faces $xyz$, $xuv$, $vpq$, where $d(y) = d(p) = 3$ and $d(x) = d(v) = 4$.

\item\label{conf:diamondNEW2c3s} A diamond $xz_1yz_2$ where $xy$ is an edge, $d(z_1) = 3$, $d(y) = 4$, $d(x) = 5$, and $d(z_2) = 4$, where $x$ and $z_2$ are each incident to a bad 3-face.

\item\label{conf:diamondNEWc3chain} A diamond $xz_1yz_2$ where $xy$ is an edge, $d(z_1) = 3$ and $d(y) = d(x) = d(z_2) = 4$, where $z_2$ is incident to a good 3-face $z_2uv$ with $d(v) = 4$ and $v$ is incident to another bad 3-face.

\item\label{conf:diamondNEWc3chainPLUS} A diamond $xz_1yz_2$ where $xy$ is an edge, $d(z_1) = 3$,  $d(x) = 5$, and $d(y) = d(z_2) = 4$, where $x$ is incident to a bad 3-face and $z_2$ is incident to a good 3-face $z_2uv$ with $d(v) = 4$ and $v$ is incident to another bad 3-face.

\label{conf:last}
\end{enumerate}
\end{definition}

The configurations (C1)--(C\ref{conf:lastsimple}) are called \emph{simple}.
See Figure~\ref{fig-reducible}. 
Other configurations can be built from simple ones by replacing an edge with one endpoint in the configuration by a bad 3-face; we call these \emph{compound}. 
See Figure~\ref{fig-rules-generating-noted} for a sketch of creating compound configurations. 
For convenience, we list compound configurations used in our proof. See Figure~\ref{fig-reducible56plus}.  
Reducibility is proved in Lemma~\ref{glue} from Section~\ref{subsec:reduce}.

Observe that a prime graph $G$ has no 5- or 6-faces since no 5- or 6-cycles exist and $G$ is 2-connected.

\subsection{Discharging}\label{subsec:discharge}

In this section, we prove the following proposition.

\begin{proposition}\label{prop:noprime}
	No pair $(G,L)$ is prime.
\end{proposition}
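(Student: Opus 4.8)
The plan is to run a discharging argument on a hypothetical prime pair $(G,L)$, deriving a contradiction with Euler's formula. Since $G$ is $2$-connected and has no $5$- or $6$-cycles, its faces are either small (length $3$ or $4$) or large (length $\geq 7$); write Euler's formula in the form $\sum_{v\in V(G)}(d(v)-4) + \sum_{f\in F(G)}(d(f)-4) = -8$. Assign to each vertex $v$ the initial charge $d(v)-4$ and to each face $f$ the charge $d(f)-4$, so the total charge is $-8$. I will design discharging rules that move charge from high vertices and large faces toward low vertices, bad $3$-faces, and normal $4$-faces whose incident vertices are low, and then show that after discharging every vertex and every face has nonnegative charge, contradicting the negative total.

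The concrete steps: First I set up the charge and state the rules precisely --- the natural choices are (R1) each large face donates a fixed amount (something like $\tfrac13$ or a value depending on the adjacent small faces) across each incident edge to small faces or low vertices, (R2) a high vertex donates charge to each incident bad $3$-face and to each incident normal $4$-face that has too few high vertices, with the amount depending on the degree of the high vertex and the configuration type around it, and (R3) special local transfers handling diamonds and facial $K_4$'s, calibrated so that the forbidden configurations (C\ref{conf:pC3})--(C\ref{conf:last}) are exactly the obstructions that would otherwise leave some element with negative charge. Second, I verify face by face: a good $3$-face gets enough from its three (necessarily high) incident vertices; a bad $3$-face is fine because (C\ref{conf:c3-1high}) forbids it from having at most one high vertex, so it has at least two high neighbors to draw on, and the compound configurations (C\ref{conf:firstcompound})--(C\ref{conf:last}) rule out precisely the degree patterns in which two high vertices of bounded degree cannot supply enough; a normal $4$-face with all-low vertices is forbidden by (C\ref{conf:lowface}), and (C\ref{conf:squareplusc3}), (C\ref{conf:c3c3}), (C\ref{conf:threec3s}) handle the remaining tight $4$-face cases; a special $4$-face is handled using $d(p)\geq 2$ and (C\ref{conf:pC3}); large faces keep nonnegative charge because each donates a bounded amount per edge. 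Third, I verify vertex by vertex: a low vertex $v$ has charge $-1$ and must receive $1$ --- since (C\ref{conf:c3-1high}) and the $C_5$/$C_6$-free condition limit how many small faces surround $v$ and force large faces or high neighbors nearby, $v$ collects its deficit; a high vertex of degree $d\geq 4$ starts with $d-4\geq 0$ and the rules are tuned so it never overspends, with the diamond/facial-$K_4$ configurations (C\ref{conf:baddiamond}), (C\ref{conf:crown}), (C\ref{conf:facialk4}) and their compound versions exactly excluding the cases where a degree-$4$ or degree-$5$ vertex would be drained below zero.

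The main obstacle I anticipate is the bookkeeping around degree-$4$ and degree-$5$ high vertices that sit on several small faces at once: such a vertex has only $0$ or $1$ unit of surplus, so the rules must be extremely finely calibrated, and it is precisely to protect these vertices that the long list of compound configurations (C\ref{conf:firstcompound})--(C\ref{conf:last}) was introduced. Checking that every ``bad'' local picture around a tight vertex or face is genuinely one of the listed forbidden configurations --- and that the chosen numerical constants make the inequalities hold with no slack lost --- is the delicate part; it amounts to a finite but intricate case analysis over the possible arrangements of $3$-faces, diamonds, and $4$-faces in the link of each vertex, using repeatedly that no two $3$-faces can close up into a $5$- or $6$-cycle. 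A secondary point to be careful about is the face $p$ lives on and its incident special $4$-faces, where the precoloring weakens what we can assume; here the extra hypotheses $d(p)\geq 2$ and the absence of a $3$-face at $p$ (C\ref{conf:pC3}) must be used to keep the argument local. Once all cases close, the sum of the final charges is simultaneously $-8$ and $\geq 0$, the desired contradiction, so no prime pair exists.
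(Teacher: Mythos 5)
Your high-level plan is the same as the paper's --- fix a hypothetical prime pair, assign charges with negative total via Euler's formula, design rules so that the forbidden configurations (C1)--(C16) are precisely what makes every final charge nonnegative --- but what you have written is a sketch of a sketch, not a proof, and the one concrete commitment you do make (the charge normalization) is different from, and more problematic than, the paper's. You take $\mu(v)=d(v)-4$ and $\mu(f)=d(f)-4$ with total $-8$. That is a valid form of Euler's formula, but it gives every degree-$3$ vertex charge $-1$, so low vertices become a second class of hungry objects on top of small faces, and you now must route charge to them as well. The paper instead takes $\mu(v)=2d(v)-6$ for $v\neq p$, $\mu(p)=2d(p)$, and $\mu(f)=d(f)-6$, with total $-6$; this makes every degree-$3$ vertex charge-neutral, makes $6$-faces neutral (consistent with the fact that a $2$-connected $C_5$-, $C_6$-free plane graph has no $5$- or $6$-faces), and gives $p$ an explicit surplus of $6$ to pay for its special $4$-faces. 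Under that normalization the only negatively charged objects are $3$-faces ($-3$) and $4$-faces ($-2$), which is a much smaller target set for the rules. You would have to redo the entire calibration from scratch to make your normalization close, and nothing in your proposal shows that it does.

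Beyond the normalization, your proposal leaves out the two mechanisms that carry the real weight of the paper's argument and that cannot be guessed from the configuration list alone. First, the paper's vertex rules are split by degree ($d=4$, $d=5$, $d\geq 6$) and, crucially, by the local arrangement of small faces around the vertex (whether it lies on a $3$- and a $4$-face simultaneously, on a diamond, on three bad $3$-faces, etc.), with fractional amounts such as $\tfrac{4}{7}$, $\tfrac{9}{7}$, $\tfrac{10}{7}$, $\tfrac{3}{2}$ that are tuned jointly with the face rules. Second, the paper introduces the notion of a \emph{hungry} face and face rules (R5)--(R7) by which a large face $f$ sends $\tfrac17$ across each incident edge, but may \emph{redirect} that $\tfrac17$ to a neighboring hungry face when the pivot vertex has degree $\leq 4$; these redirections are what rescue the tight degree-$4$/degree-$4$ situations you flag as the ``main obstacle,'' and without some analogue of them the bookkeeping around bad $3$-faces with all incident high vertices of degree $4$ does not close. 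Your proposal gestures at ``rules calibrated so the configurations are exactly the obstructions'' but never produces them, and since in discharging proofs the verification \emph{is} the content, this is a genuine gap: as written, the argument cannot be checked, and the one quantitative choice you committed to points it in a harder direction than the paper's.
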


We shall prove that a prime $(G,L)$ does not exist by assigning an initial \emph{charge} $\mu(z)$ to each $z\in V(G) \cup F(G)$ with strictly negative total sum, then applying a discharging process to end up with charge $\mu^*(z)$.
We prove that since $(G,L)$ does not contain any configuration in (C\ref{conf:pC3})--(C\ref{conf:last}), then $\mu^*$ has nonnegative total sum.
The discharging process will preserve the total charge sum, and hence we find a contradiction and $G$ does not exist.

For every vertex $v\in V(G)- p$ let $\mu(v)=2d(v)-6$, for $p$ let $\mu(p)=2d(p)$, and for every face $f\in F(G)$, let $\mu(f)=d(f)-6$. 
The total initial charge is negative by
\begin{align*}
\sum_{z\in V(G)\cup F(G)} \mu(z)
	&=\sum_{v\in V(G)- p} (2d(v)-6)+2d(p)+\sum_{f\in V(F)} (d(f)-6) \\
	&=6|E(G)|-6|V(G)|-6|F(G)|+6
	=-6.
\end{align*}
The final equality holds by Euler's formula.

In the rest of this section we will prove that the sum of the final charge after the discharging phase is nonnegative. 
Instead of looking at each individual face, we look at groups of adjacent 3-faces. 

Note that since $G$ has no $5$-cycles and $6$-cycles,  no 4-face is adjacent to a 3- or 4-face
(and hence every face adjacent to a 4-face has length at least seven).
If a vertex $v$ with $d(v) \geq 4$ is incident to $\ell_3$ 3-faces and $\ell_4$ 4-faces, then $d(v) \geq \frac{3}{2}\ell_3 + 2\ell_4$.
Thus, every vertex $v$ is incident to at most $2d(v)/3$ small faces. 

\begin{figure}[tp]
\begin{center}
\includegraphics[scale=0.90]{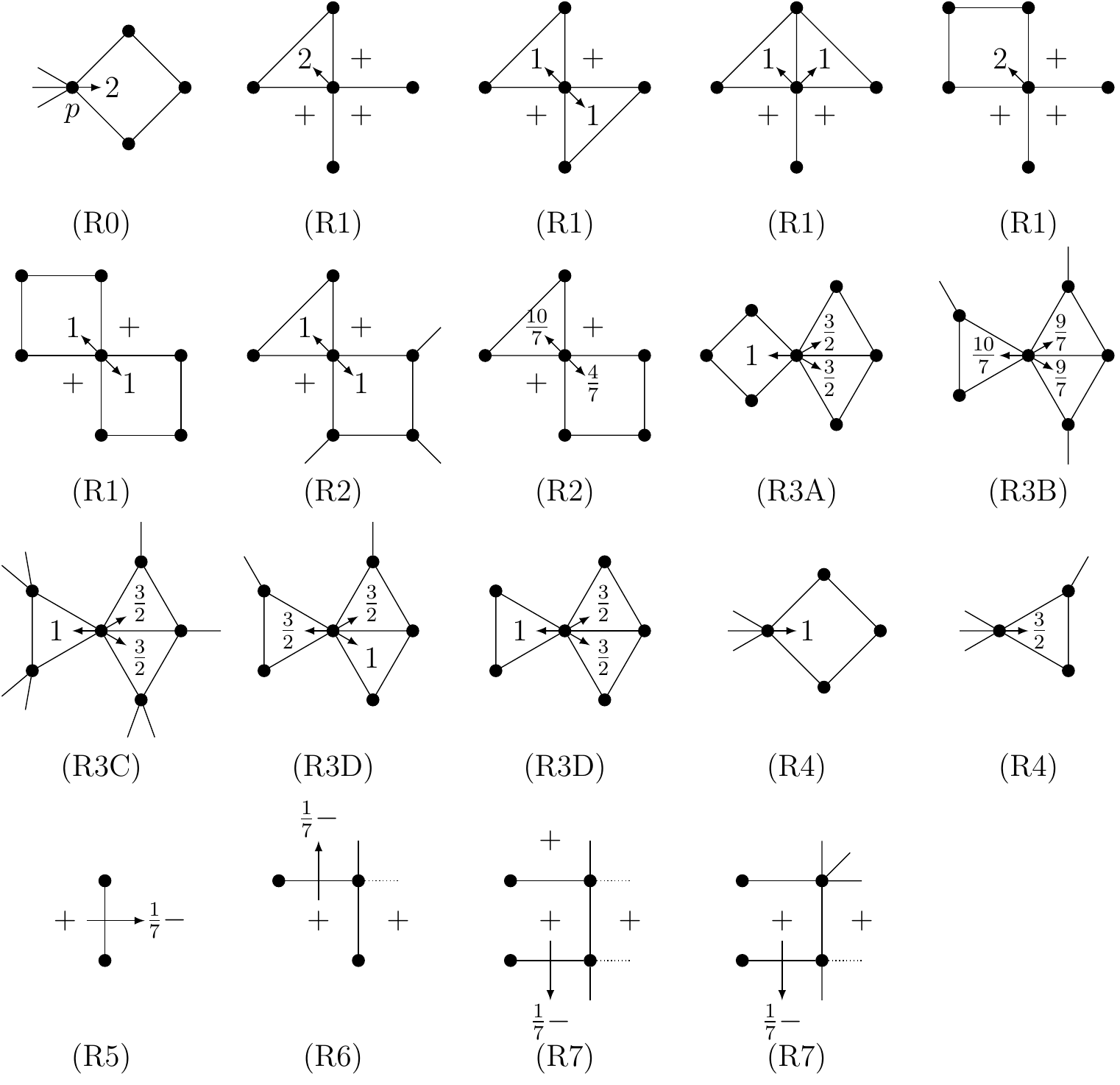}
\caption{Discharging rules.}
  \label{fig:discharging}
\end{center}
\end{figure}

We begin by discharging from vertices with positive charge to small faces with negative charge.
The precolored vertex $p$ transfers charge according to rule (R0).
\begin{enumerate}
\item[(R0)] $p$ sends charge 2 to every (special) incident $4$-face.
\end{enumerate}

For a vertex $v \in V(G) - p$ with $d(v) \geq 4$, exactly one of the discharging rules (R1)--(R4) applies; rules (R0)--(R4) are called \emph{vertex rules}.

\begin{enumerate}[(R1)]
\item[(R1)] If $d(v)=4$ and $v$ is not incident to both a 3-face and a normal $4$-face, then $v$ distributes its charge uniformly to each incident 3-face or normal $4$-face. 

\item[(R2)] If $d(v)=4$ and $v$ is incident to a 3-face $t$ and a normal $4$-face $f$, then:

\begin{enumerate}[(R2A)]
\item If $f$ is incident to exactly one high vertex, then $v$ gives charge $1$ to $f$ and $1$ to $t$.

\item If $f$ is incident to more than one high vertex, then $v$ gives charge ${4\over 7}$ to $f$ and ${10\over 7}$ to $t$.
\end{enumerate}

\item[(R3)] If $d(v) = 5$, then:
\begin{enumerate}[(R3A)]
\item If $v$ is incident to a normal $4$-face, then $v$ gives charge $1$ to each normal $4$-face and distributes its remaining charge uniformly to each incident 3-face.

\item If $v$ is incident to three bad 3-faces, then $v$ gives charge ${10\over 7}$ to the isolated 3-face and ${9\over 7}$ to each 3-face in the diamond.
 
\item If $v$ is incident to only one bad 3-face that is in a diamond with another 3-face incident to $v$, then $v$ gives charge ${3\over 2}$ to both 
3-faces in the diamond, and if $v$ is incident to another 3-face $t$, then $v$ gives charge $1$ to $t$.

\item Otherwise, $v$ gives charge ${3\over 2}$ to each incident bad 3-face and distributes its remaining charge uniformly to each incident non-bad 3-face.
\end{enumerate}

\item[(R4)] If $d(v)\geq 6$, then $v$ gives charge $1$ to each incident normal $4$-face and charge ${3\over 2}$ to each incident 3-face.
 
\end{enumerate}

After applying the vertex rules, we say a face is \emph{hungry} if it is a negatively-charged small face, or it is a 3-face in a negatively-charged diamond.

We now discharge from large faces to hungry faces; the rules (R5)--(R7) are \emph{face rules}.
Let $f$ be a face with $d(f)\geq 7$ and let $f_0, f_1, f_2, \ldots,f_{d(f)} = f_0$ be the faces adjacent to $f$ in counterclockwise order.
Observe that $f$ has charge at least $d(f)/7$, and so $f$ could send charge $\frac{1}{7}$ to each adjacent face.
Each of the rules below could apply to $f$ and an adjacent face $f_i$, to decide where the charge $\frac{1}{7}$ associated with $f_i$ should go.

\begin{enumerate}[(R5)]
\item[(R5)] If $f_i$ is hungry, then $f$ gives charge ${1\over 7}$ to $f_i$.

\item[(R6)] If $f_i$ is not hungry, $f_{i+1}$ is hungry, and the vertex incident to $f$, $f_i$, and $f_{i+1}$ has degree at most four, then $f$ gives charge ${1\over 7}$ to $f_{i+1}$ instead of $f_i$.

\item[(R7)] If $f_{i}$ is not hungry, $f_{i-1}$ is hungry, the vertex incident to $f$, $f_{i-1}$, and $f_{i}$ has degree at most four, and either the vertex incident to $f$, $f_i$, and $f_{i+1}$ has degree at least five or $f_{i+1}$ is not hungry, then $f$ gives charge ${1\over 7}$ to $f_{i-1}$ instead of $f_i$.
\end{enumerate}

We now show that the discharging rules result in a nonnegative charge sum $\sum_{v\in V(G)} \mu^*(v) + \sum_{f\in F(G)} \mu^*(f) \geq 0$, contradicting our previously computed sum of $-6$.
First, we prove that the final charge $\mu^*$ is nonnegative on every vertex.
Then, we prove that the final charge $\mu^*$ is nonnegative on every large face and every 4-face.
A set $S$ of 3-faces is \emph{connected} if they induce a connected subgraph of the dual graph.
Since $G$ contains no 5- or 6-cycles, a connected set of 3-faces is either a facial $K_4$, a diamond, or an isolated 3-face.
We will show that for every connected set $S$ of 3-faces, the final charge sum $\sum_{f \in S} \mu^*(f)$ is nonnegative.

\begin{claim}
For each vertex $v\in V(G)$, the final charge $\mu^*(v)$ is nonnegative.
\end{claim}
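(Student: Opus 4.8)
The plan is to verify that each vertex $v$ ends with nonnegative charge $\mu^*(v)$ by splitting into cases according to the degree and type of $v$, and checking that the vertex rules (R0)--(R4) never send away more charge than $v$ has.

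First I would handle the precolored vertex $p$. Its initial charge is $\mu(p)=2d(p)$, and by rule (R0) it sends charge $2$ to each incident special $4$-face. Since no $4$-face is adjacent to a $3$- or $4$-face and each $4$-face uses two consecutive edges at $p$, the number of incident $4$-faces is at most $d(p)/2$, so $p$ sends away at most $d(p)$ and retains at least $d(p)\ge 2>0$. (Note $p$ is incident to no $3$-face by configuration (C1).) Next I would dispense with low vertices: a vertex $v\ne p$ with $d(v)=3$ has $\mu(v)=0$ and is subject to none of (R0)--(R4), and receives nothing back from any vertex rule, so $\mu^*(v)=0$.

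For a vertex $v\ne p$ with $d(v)=4$ we have $\mu(v)=2$; exactly one of (R1) or (R2) applies. In case (R1) all of the charge $2$ is distributed uniformly among the incident $3$-faces and normal $4$-faces (of which there is at least one, else nothing is sent), so $\mu^*(v)=0$. Since $v$ is incident to at most one $3$-face or normal $4$-face when (R1)'s hypothesis ``not incident to both'' holds — actually a degree-$4$ vertex can be incident to two $3$-faces forming a diamond, so I would note that in that case each receives $1$ — in every subcase the total sent is exactly $2$. In case (R2) the vertex is incident to exactly one $3$-face $t$ and one normal $4$-face $f$ (it cannot be incident to two $3$-faces and a $4$-face since $4+4>2\cdot 4$ via the counting $d(v)\ge\frac32\ell_3+2\ell_4$), and in each of (R2A), (R2B) the amounts sent sum to $2$, giving $\mu^*(v)=0$.

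For $d(v)=5$ we have $\mu(v)=4$, and I would check each of (R3A)--(R3D): in (R3A) one normal $4$-face gets $1$ (at most one normal $4$-face is possible since $2+\frac32+\frac32>5$ would be needed for two $4$-faces... actually $\ell_4\le 1$ here, wait $2\cdot2=4\le5$, so I would argue carefully that if $v$ has a normal $4$-face it has at most one $3$-face too, using $2\ell_4+\frac32\ell_3\le 5$) and the remaining $\ge 3$ is distributed uniformly to the $\le 2$ incident $3$-faces; in (R3B) the amounts are $\frac{10}{7}+2\cdot\frac97=\frac{28}{7}=4$; in (R3C) the diamond gets $2\cdot\frac32=3$ and the extra $3$-face gets $1$, totalling $4$; in (R3D) bad $3$-faces get $\frac32$ each and the remainder is split among non-bad $3$-faces, and since $v$ is incident to at most $\lfloor 2\cdot5/3\rfloor=3$ small faces (all $3$-faces here, as a $4$-face would put us in (R3A)), the worst case is three bad $3$-faces — but that is exactly case (R3B) — so in (R3D) at most two $3$-faces receive $\frac32$ and the rest of the charge covers the others, giving $\mu^*(v)\ge 0$. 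For $d(v)\ge 6$, rule (R4) sends $1$ to each incident normal $4$-face and $\frac32$ to each incident $3$-face; since $v$ is incident to at most $2d(v)/3$ small faces, the charge sent is at most $\frac32\cdot\frac{2d(v)}{3}=d(v)\le 2d(v)-6=\mu(v)$ (using $d(v)\ge6$), so $\mu^*(v)\ge 0$. Finally I would observe that vertices receive charge only from vertex rules directed at faces, never from faces, so no vertex ever gains charge and these computations are exhaustive. The main obstacle is the $d(v)=5$ analysis in (R3D): one must be sure the face-count bound $2d(v)/3$ together with the classification of connected $3$-face sets (facial $K_4$, diamond, isolated) and the exclusion of configuration (C6) and similar really does prevent a degree-$5$ vertex from being forced to pay $\frac32$ to three independent bad $3$-faces outside case (R3B).
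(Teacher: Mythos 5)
Your argument is correct and matches the paper's: split by degree, use that $p$ sees at most $d(p)/2$ four-faces and other vertices at most $2d(v)/3$ small faces, and verify each vertex rule (R0)--(R4) sends away at most $\mu(v)$. The only hiccup is a momentary miscount inside your (R3A) parenthetical (a degree-$5$ vertex may in fact be incident to two normal $4$-faces with $\ell_3=0$, or one normal $4$-face with up to two $3$-faces), but your own follow-up ``the remaining $\ge 3$ is distributed uniformly to the $\le 2$ incident $3$-faces'' is the correct count, so the conclusion $\mu^*(v)\ge 0$ stands.
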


\begin{proof}
If $v=p$, then rule (R0) applies then $p$ is incident to at most $d(p)/2$
4-faces since 4-faces cannot share an edge. 
So $\mu^*(v) \geq 2d(p) - 2d(p)/2 \geq 0$.

Assume $v \in V(G)\setminus p$.
Recall $d(v) \geq 3$.
If $d(v) = 3$, then $\mu(v) = 0$ and no charge is sent from this vertex.

If $d(v)=4$, then $\mu(v) = 2$ and (R1) or (R2) applies.
Consider the four faces incident to $v$.
Since $G$ avoids $5$- and 6-cycles, at most two of these faces are small.
If $v$ is not incident to both a 3- and $4$-face, then (R1) applies and $v$ sends all charge uniformly to each small face; hence $\mu^*(v) = 0$.
If $v$ is incident to both a 3- and $4$-face, then (R2) applies and $v$ sends total charge two to the two small faces (either as $1+1$ for (R2A) or $\frac{4}{7} + \frac{10}{7}$ for (R2B)).

If $d(v)=5$, then $\mu(v)=4$ and (R3) applies.
There are five faces incident to $v$, and since $G$ avoids 5- and 6-cycles, at most three of these faces are small.
If $v$ is incident to three bad 3-faces, then two of the faces are adjacent so these faces partition into a bad 3-face and a diamond; (R3B) applies and a total charge of four is sent from $v$, so $\mu^*(v) = 0$. 
If $v$ is not incident to three bad 3-faces, $v$ is incident to at most two 3- or 4-faces;  (R3A), (R3C), or (R3D) applies, and $v$ sends at most charge three, $\mu^*(v) \geq 0$.

If $d(v)\geq 6$, then (R4) applies.
Since $G$ avoids 5- and 6-cycles, $v$ is incident to at most $\frac{2d(v)}{3}$ small faces.
Since $\mu(v) = 2d(v)-6$ and $v$ sends charge at most $\frac{2d(v)}{3}\cdot \frac{3}{2} = d(v)$, the final charge on $v$ is $\mu^*(v) \geq d(v)-6 \geq 0$.
\end{proof}

\begin{claim}
For each face $f\in F(G)$ with $d(f)\geq 7$, the final charge $\mu^*(f)$ is nonnegative.
\end{claim}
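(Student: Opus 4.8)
The goal is to show that a large face $f$ with $d(f)=k\geq 7$ ends with nonnegative charge. Its initial charge is $\mu(f)=k-6$, and the only rules that remove charge from $f$ are the face rules (R5)--(R7), each of which sends exactly $\frac{1}{7}$ out of $f$ at most once per adjacent face. The clean way to organize this is to set up a charge-shipping \emph{accounting} in which the $\frac{1}{7}$ unit ``associated with'' the adjacent face $f_i$ is the only charge that can leave through the boundary segment near $f_i$: then $f$ sends out at most $\frac{k}{7}$, which is at most $k-6$ as soon as $k\geq 7$, with equality only at $k=7$. So for $k\geq 8$ there is slack and nothing to check; the real work is entirely the case $d(f)=7$, where we must verify that $f$ never ships out more than $7\cdot\frac{1}{7}=1$ unit, i.e., that no adjacent face ever receives two separate $\frac{1}{7}$ contributions from $f$.

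Accordingly, the first step is to observe that a large face $f$ never sends charge to a non-adjacent face, and that rules (R5), (R6), (R7) each fire at most once per index $i$ (they partition on whether $f_i$ is hungry or not, and when $f_i$ is not hungry the charge ``belonging to $i$'' is redirected to $f_{i-1}$ or $f_{i+1}$, never duplicated). The key point to pin down is that the same neighbor $g$ of $f$ cannot be fed by the $i$-th unit via (R6) as $f_{i+1}=g$ and simultaneously by the $(i+2)$-th unit via (R7) as $f_{(i+2)-1}=g$: this would require $f_{i+1}=g$ hungry, $f_i$ and $f_{i+2}$ both not hungry, and the two boundary vertices of $f$ flanking $g$ both of degree at most four. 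I would show this is impossible by invoking the structural restriction that a 4-face is not adjacent to any small face, so a hungry face sandwiched between two degree-$\le 4$ vertices on $f$ is a 3-face or diamond whose two ``outer'' vertices (the ones shared with $f$) have degree $\le 4$; combined with the absence of $5$- and $6$-cycles and the prime configurations (C\ref{conf:c3-1high}), (C\ref{conf:baddiamond}), (C\ref{conf:crown}) forbidding 3-faces and diamonds with too many low/degree-4 incident vertices, such a configuration cannot occur, so at most one of (R6)/(R7) can route charge to $g$ from either side. Likewise (R5) feeding $g$ and (R6) or (R7) also feeding $g$ from an adjacent index must be excluded by the same degree bookkeeping. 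The upshot is that each of the $k$ boundary units of $f$ reaches a distinct destination, so $f$ loses at most $\frac{k}{7}$ and $\mu^*(f)=k-6-\frac{k}{7}=\frac{6k-42}{7}\geq 0$.

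The main obstacle is exactly this last combinatorial bookkeeping for $k=7$: making sure the conditions guarding (R6) and (R7) — the degree-at-most-four hypotheses on the two vertices of $f$ adjacent to a hungry neighbor, and the ``tie-break'' clause in (R7) (either the far vertex has degree $\ge 5$ or $f_{i+1}$ is not hungry) — genuinely prevent a single neighbor of $f$ from being paid twice. One must check the few local pictures: a hungry 3-face or diamond on the boundary of $f$ between two vertices of degree $\le 4$, with hungry neighbors on one or both sides, and argue each is either ruled out by the prime hypothesis (no 5-/6-cycles force the dual components to be isolated 3-faces, diamonds, or facial $K_4$'s, and (C3)--(C\ref{conf:last}) kill the ones with low degree sums) or is handled by the asymmetric design of (R7) which deliberately refuses to redirect when that would collide with an (R6) redirection from the other side. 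Once those cases are dispatched, the arithmetic $7-7\cdot\frac17=0$ closes the claim, and for $d(f)\geq 8$ the bound $d(f)-6-d(f)/7>0$ is immediate.
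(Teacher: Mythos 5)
Your opening observation --- that the rules (R5), (R6), (R7) fire at most once per index $i$ because they partition on whether $f_i$ is hungry, and because the tie-break clause in (R7) (``either the vertex incident to $f$, $f_i$, and $f_{i+1}$ has degree at least five or $f_{i+1}$ is not hungry'') is exactly the negation of the (R6) trigger --- is already the entire proof, and it is the paper's proof. Each index $i$ contributes at most one unit of $\frac{1}{7}$, so $f$ loses at most $\frac{d(f)}{7}$ and $\mu^*(f)\geq d(f)-6-\frac{d(f)}{7}\geq 0$ whenever $d(f)\geq 7$. No case analysis, no reducible configurations, and no appeal to the absence of $5$- or $6$-cycles is needed here.

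Everything after that observation is a detour built on a false premise, and that is where the proposal goes wrong. You recast the goal as ``no adjacent face ever receives two separate $\frac{1}{7}$ contributions from $f$.'' This is not equivalent to bounding the total outflow --- what must be bounded is the number of units shipped, indexed by $i$, not the multiplicity at a destination --- and it is in fact false: if $g=f_{i+1}$ is hungry, $f_i$ and $f_{i+2}$ are not hungry, and the two vertices of $f$ on either side of $g$ have degree at most four, then $f$ sends $\frac{1}{7}$ to $g$ via (R5) at index $i+1$, another $\frac{1}{7}$ via (R6) at index $i$, and possibly a third via (R7) at index $i+2$. The discharging analysis elsewhere in the paper \emph{requires} this double-feeding; for instance, in the claim for normal $4$-faces incident to exactly one high vertex, three of the adjacent large faces each deliver both an (R5) and an (R6) contribution to the same hungry $4$-face. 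So the structural argument you propose --- excluding this scenario via (C3)--(C5) and the girth restriction --- is trying to prove something false and cannot be carried out. Since you frame $d(f)=7$ as the case that needs exactly that argument, the proposal as written does not establish the claim.
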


\begin{proof}
Let the faces adjacent to $f$ be listed in clockwise order as $f_1,f_2,\dots$ as in the discharging rules.
Observe that each adjacent face $f_i$ satisfies at most one of the rules (R5), (R6), and (R7), and hence $f$ sends charge $\frac{1}{7}$ at most $d(f)$ times, leaving $\mu^*(f) \geq \mu(f) - \frac{d(f)}{7} \geq 0$.
\end{proof}

\begin{claim}
For each $4$-face $f\in F(G)$, the final charge $\mu^*(f)$ is nonnegative.
\end{claim}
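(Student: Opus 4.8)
The plan is to handle the two kinds of $4$-faces separately. Suppose first that $f$ is \emph{special}, i.e.\ incident to $p$. By rule (R0) the vertex $p$ sends charge $2$ to $f$, and this is the only charge that ever reaches $f$: the vertex rules (R1)--(R4) send charge only to \emph{normal} $4$-faces, and degree-$3$ vertices send nothing; while under the face rules a $4$-face is never a sender, so $f$ loses nothing. As $f$ has charge $0$ after the vertex rules it is not hungry, so no face rule moves charge onto it either. Hence $\mu^{*}(f) = -2 + 2 = 0$.

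Now assume $f$ is a normal $4$-face, so $\mu(f) = -2$ and it suffices to show that $f$ receives total charge at least $2$. Since $G$ has no $5$- or $6$-cycles, no $4$-face is adjacent to a $3$- or $4$-face, so the four faces sharing an edge with $f$ are all large (some possibly equal, which only helps); call them $L_{1},\dots,L_{4}$. I will use repeatedly that if $v$ is an incident vertex of $f$ with $d(v)=3$, then the two faces at $v$ other than $f$ are exactly the two of the $L_{i}$ lying across the two edges of $f$ at $v$; in particular they are never hungry, and the ``degree at most four'' clauses of rules (R6) and (R7) hold automatically at $v$. By configuration (C2), $f$ has at least one incident high vertex. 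If $f$ has an incident high vertex $v$ with $d(v)=4$ whose only incident small face is $f$, then (R1) sends all of $v$'s charge to $f$ and we are done. If $f$ has at least two incident high vertices, at least one of degree at least $5$, then that vertex sends $1$ to $f$ by (R3A) or (R4), a second high vertex sends at least $\tfrac{4}{7}$ to $f$ (the least that (R1)--(R4) can deliver to a normal $4$-face, attained by (R2B)), and, $f$ being then hungry, each $L_{i}$ sends at least $\tfrac{1}{7}$ to $f$ by (R5); the total exceeds $2$.

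The remaining, \emph{tight}, cases are those in which the incident vertices give $f$ only a little: either $f$ has exactly one incident high vertex, which sends exactly $1$ to $f$ --- by (R1), (R2A), (R3A), or (R4), with any $3$-face at that vertex being \emph{good} by configuration (C11) when the vertex has degree $4$ --- or $f$ has exactly two incident high vertices, both of degree $4$ and both applying (R2B), for a total of $\tfrac{8}{7}$. In the first case $f$ has three incident vertices of degree $3$, in the second case two, and in either case $f$ is hungry. I would recover the deficit from $L_{1},\dots,L_{4}$: writing $f = g_{j}$ in the cyclic list of faces around a large face $L$, the face $f$ gets $\tfrac{1}{7}$ from $L$ by (R5), a further $\tfrac{1}{7}$ by (R6) whenever its predecessor $g_{j-1}$ around $L$ is not hungry and the vertex shared by $f$ and $g_{j-1}$ on $L$ has degree at most four (automatic at the degree-$3$ vertices of $f$), and possibly a third $\tfrac{1}{7}$ by (R7) from $g_{j+1}$. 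Thus any $L$ meeting $f$ along an edge both of whose endpoints have degree $3$ gives $f$ at least $\tfrac{2}{7}$, since both of $f$'s neighbours around $L$ are then large and hence not hungry. Adding the contributions of all four large faces, the charge from the incident vertices, and --- in the few genuinely tight arrangements --- the last $\tfrac{1}{7}$ guaranteed by the extra degree clauses of (R7), one obtains $\mu^{*}(f)\ge 0$.

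The delicate step, and where I expect the real work, is exactly this final accounting. How much a large face $L$ gives to $f$ depends on whether, in the cyclic order of faces around $L$, the non-hungry neighbour of $f$ occupies the slot $g_{j-1}$ (so (R6) applies with no extra hypothesis) or the slot $g_{j+1}$ (so one must verify the extra condition in (R7)), and on whether the other neighbour of $f$ around $L$ is itself a hungry $3$-face --- which can occur only across an edge of $f$ incident to a degree-$4$ high vertex that lies on a good $3$-face. Settling these last positions requires tracking the orientations of the boundary walks of $L_{1},\dots,L_{4}$ and checking in each that the extra $\tfrac{1}{7}$ promised by (R7) is indeed delivered; this is exactly the purpose of the fine degree hypotheses built into rules (R6) and (R7).
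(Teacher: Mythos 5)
Your treatment of the special case and of the easy normal cases is fine, and your classification of the tight cases essentially matches the paper's (modulo the unmentioned but trivial case of three or more high vertices all of degree $4$, which gives $3\cdot\frac{4}{7}+4\cdot\frac{1}{7}>2$). The genuine gap is that you never close the tight cases: you explicitly defer ``the real work'' of deciding, for each large face, whether the extra $\frac{1}{7}$ arrives via (R6) or must be extracted from (R7), and your only quantitative tool there --- ``a large face meeting $f$ along an edge with both endpoints of degree $3$ gives at least $\frac{2}{7}$'' --- is not enough. In the one-high-vertex case it yields only $-2+1+2\cdot\frac{2}{7}+2\cdot\frac{1}{7}=-\frac{1}{7}$, so on your accounting the claim is simply not proved, and chasing (R7) (whose hypotheses involve yet further faces) is the wrong direction.

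The missing idea, which is how the paper's count ``three (resp.\ two) of the four adjacent faces send an additional $\frac{1}{7}$ by (R6)'' works, is to count the extra transfers per \emph{low vertex} of $f$ rather than per edge. At a degree-$3$ vertex $u$ of $f$ the three faces at $u$ are $f$ and two faces each sharing an edge of $f$, hence both large and never hungry. In the counterclockwise list of faces around exactly one of these two large faces, the other large face is the slot immediately preceding $f$ at $u$; since that predecessor is not hungry, $f$ is hungry, and $d(u)=3\le 4$, rule (R6) fires for that face and redirects the $\frac{1}{7}$ of the slot corresponding to the third edge at $u$ onto $f$ --- irrespective of the orientation of the boundary walks and with no appeal to (R7). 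Distinct low vertices use distinct slots, so there is no double counting even if adjacent large faces coincide. This gives one extra $\frac{1}{7}$ for each low vertex, and the arithmetic then closes exactly: $-2+1+4\cdot\frac{1}{7}+3\cdot\frac{1}{7}=0$ with one high vertex, and $-2+2\cdot\frac{4}{7}+4\cdot\frac{1}{7}+2\cdot\frac{1}{7}=0$ with two.
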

\begin{proof}
If $f$ is a special $4$-face, (R0) applies. Thus
$\mu^*(f) =-2 + 2=0$.
So assume that $f$ is a normal $4$-face.

Observe $\mu(f)=-2$, and all faces adjacent to $f$ have length at least seven.
Since $G$ contains no (C\ref{conf:lowface}), the normal 4-face $f$ is incident to at least one high vertex.

If $f$ is incident to exactly one high vertex $v$, then $v$ sends charge at least $1$ to $f$ by (R1), (R2A), or (R4).
The four incident faces each send charge at least $\frac{1}{7}$ by (R5). 
Three of the four faces adjacent to $f$ each send an additional $\frac{1}{7}$ by (R6).
Thus, $\mu^*(f)\geq -2+1+4\cdot\frac{1}{7}+3\cdot\frac{1}{7}=0$.

If $f$ is incident to exactly two high vertices $u$ and $v$, then $u$ and $v$ each send charge at least $\frac{4}{7}$ by (R1), (R2B), (R3A), or (R4). 
The four incident faces each send charge $\frac{1}{7}$ by (R5).
Two  of the four faces adjacent to $f$ each send an additional $\frac{1}{7}$ by (R6).
Thus, $\mu^*(f)\geq -2+2\cdot{4\over 7}+4\cdot{1 \over 7}+2\cdot{1\over 7}=0$.

If $f$ is incident to at least three high vertices, then each high vertex sends charge at least $4\over 7$ by (R1), (R2B), (R3A), or (R4). 
The four faces adjacent to $f$ each send charge at least $1\over 7$ by (R5).
Thus, $\mu^*(f)\geq -2+3\cdot{4\over 7}+4\cdot{1\over 7}\geq0$.
\end{proof}

We now show the total charge sum over the 3-faces is nonnegative by showing the charge sum is nonnegative on each connected set of 3-faces, starting with facial $K_4$'s (Claim~\ref{k4}), then diamonds (Claim~\ref{dia2}), and finally isolated 3-faces (Claim~\ref{claim:isotri}).

Observe that if $t$ is a good 3-face, then $t$ receives charge at least 1 from every incident vertex by the vertex rules.

\begin{claim}\label{k4}
For each facial $K_4$, the sum of the final charge of the three 3-faces is nonnegative.
\end{claim}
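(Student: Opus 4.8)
The plan is to show that the three $3$-faces forming the facial $K_4$, which together carry initial charge $3(3-6)=-9$, receive total charge at least $9$ during discharging; in fact I expect the vertex rules (R1)--(R4) alone to supply all of it, so the face rules will not be needed here.

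First I would fix notation: write the facial $K_4$ as $wxyz$ with $w$ the internal vertex, so its three $3$-faces are $t_1=wxy$, $t_2=wyz$, and $t_3=wzx$. These three faces are exactly the faces incident to $w$, so $d(w)=3$; hence $\mu(w)=0$ and $w$ sends no charge. Since each of $w,x,y,z$ lies on a $3$-face, configuration (C\ref{conf:pC3}) shows none of them equals $p$, and (C\ref{conf:facialk4}) gives $d(x),d(y),d(z)\ge 5$. Each of $x,y,z$ lies on exactly two of $t_1,t_2,t_3$, and every $t_i$ is bad since it contains the low vertex $w$. (As a geometric sanity check, since a connected set of $3$-faces is a facial $K_4$, a diamond, or an isolated $3$-face and a $4$-face is never adjacent to a $3$-face, the faces across the edges $xy$, $yz$, $zx$ all have length at least $7$; I will not actually need this.)

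The heart of the argument is to show that each $v\in\{x,y,z\}$ sends total charge at least $3$ to the two $t_i$'s incident to it. If $d(v)\ge 6$, rule (R4) sends $\tfrac32$ to each incident $3$-face, hence at least $3$ into the facial $K_4$. If $d(v)=5$, I first exclude sub-rules (R3B) and (R3C): $v$ is incident to two bad $3$-faces, so (R3C), which requires exactly one, cannot apply; and if $v$ were incident to a third bad $3$-face, that face would be bounded by $v$'s two remaining edges and their other endpoints, which is precisely configuration (C\ref{conf:facialk4c3}) --- forbidden in a prime pair --- so (R3B) cannot apply. Thus $v$ is governed by (R3A) or (R3D). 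Under (R3D), $v$ gives $\tfrac32$ to each incident bad $3$-face, in particular to both $t_i$'s at $v$, a total of $3$. Under (R3A), the bound $\tfrac32\ell_3+2\ell_4\le d(v)=5$ with $\ell_3\ge 2$ forces $\ell_3=2$ and $\ell_4=1$, so after sending $1$ to its normal $4$-face $v$ splits the remaining $3$ evenly, $\tfrac32$ to each $t_i$, again totalling $3$. Summing over $x,y,z$ gives at least $9$ charge into $t_1,t_2,t_3$, so $\sum_i \mu^*(t_i)\ge -9+9=0$.

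The only delicate point is the degree-$5$ analysis: one must identify the correct sub-rule of (R3) and, crucially, invoke the compound configuration (C\ref{conf:facialk4c3}) to prevent a degree-$5$ outer vertex of the facial $K_4$ from also carrying a bad $3$-face --- otherwise (R3B) would route only $\tfrac97$ to each diamond face and the count would fall short. Everything else is routine bookkeeping with the Euler-derived initial charges and the bound that a vertex of degree $d$ meets at most $\tfrac{2d}{3}$ small faces.
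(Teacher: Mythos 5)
Your proof is correct and takes essentially the same route as the paper: (C\ref{conf:facialk4}) forces $d(x),d(y),d(z)\ge 5$, (C\ref{conf:facialk4c3}) rules out a third bad 3-face at a degree-5 outer vertex, and then each of $x,y,z$ sends at least $2\cdot\frac{3}{2}$ into the facial $K_4$, giving $-9+3\cdot 3\ge 0$. Your explicit treatment of sub-rule (R3A) and exclusion of (R3B)/(R3C) is in fact slightly more careful than the paper, which cites only (R3D) and (R4), but the underlying argument is the same.
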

\begin{proof}
Let $wxyz$ be a facial $K_4$ where $w$ is the internal vertex.
Since $G$ contains no (C\ref{conf:facialk4}), all vertices $v \in \{x,y,z\}$ have degree $d(v) \geq 5$.
Since $G$ contains no (C\ref{conf:facialk4c3}), any vertex $v \in \{x,y,z\}$ with $d(v) = 5$ is not incident to another bad 3-face outside the facial $K_4$.
Thus, each vertex $v\in\{x, y, z\}$ sends charge at least $2\cdot\frac{3}{2}$ by (R3D) or (R4) to the 3-faces in the facial $K_4$, and $\mu^*(wxy)+\mu^*(wyz)+\mu^*(wzx) \geq -9 + 3 + 3 + 3 = 0$.
\end{proof}

\begin{claim}\label{dia2}
For each diamond, the sum of the final charge of the two 3-faces is nonnegative.
\end{claim}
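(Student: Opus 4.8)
The plan is to show every diamond receives total charge at least $6$, so that its two $3$-faces — starting with charge $-3+(-3)=-6$ — have nonnegative final charge sum. Write the diamond as $xz_1yz_2$ with $3$-faces $t_1=xz_1y$ and $t_2=xz_2y$, so $x$ and $y$ are the endpoints of the shared edge $xy$. Charge enters the diamond from its incident vertices via (R1)--(R4), with $x$ and $y$ each feeding both $t_1$ and $t_2$; and, when the diamond is hungry, from the large faces across the four outer edges $xz_1,z_1y,yz_2,z_2x$ via (R5)--(R7). I use that a diamond is a maximal connected set of $3$-faces and that no $4$-face is adjacent to a $3$-face, so each of those four outer edges lies on a large face; and that if $d(x)=d(y)=4$ and the diamond is hungry, then the incident large faces give the diamond at least $\tfrac87$ in total, via (R5) together with (R6)/(R7) at $x$ and $y$ (the faces adjacent there being again large, hence not hungry).

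First I determine which of $x,y,z_1,z_2$ are high. Avoiding (C\ref{conf:c3-1high}), each of $t_1,t_2$ has at least two high vertices, so at most one of $x,y$ is low; if, say, $x$ is low, then $z_1,z_2,y$ are high and, avoiding (C\ref{conf:crown}), $d(y)\ge5$. The argument then splits into the case $x,y$ both high and the case exactly one of them low.

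Suppose $x,y$ are both high. If $z_1,z_2$ are both high, then $t_1,t_2$ are both good, each incident vertex sends at least $1$ to each incident $3$-face, and the diamond gets at least $6$. If exactly one of $z_1,z_2$, say $z_1$, is low, then $t_1$ is bad and $t_2$ is good; a degree-$4$ diamond endpoint is incident to no normal $4$-face, so $x$ and $y$ each send $1$ to each of $t_1,t_2$ by (R1), and $z_2$ sends at least $1$ to $t_2$, leaving the diamond with at least $-1$ after the vertex rules (hence hungry, unless already nonnegative); when $d(x)=d(y)=4$ the large faces then add at least $\tfrac87$, for a total of at least $\tfrac17$, whereas if $d(x)\ge5$ or $d(y)\ge5$ the surplus from (R3)/(R4) covers the deficit except in finitely many tight patterns ruled out by (C\ref{conf:diamondc3c}) and the compound configurations (C\ref{conf:diamondNEW2c3s})--(C\ref{conf:diamondNEWc3chainPLUS}). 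If $z_1,z_2$ are both low, then $t_1,t_2$ are both bad and, avoiding (C\ref{conf:baddiamond}), $d(x)\ge5$ or $d(y)\ge5$; then (R3B)/(R3D)/(R4), the hungry diamond's share of large-face charge, and (C\ref{conf:facialk4c3}), (C\ref{conf:diamondc3a}) (which keep a degree-$5$ endpoint from being overcommitted to a further bad $3$-face) make the sum nonnegative. In the remaining case exactly one of $x,y$, say $x$, is low, so $d(y)\ge5$ and $z_1,z_2$ are high; then $y$ sends at least $\tfrac32$ to each of $t_1,t_2$ unless $d(y)=5$ and $y$ lies on a bad $3$-face outside the diamond, forbidden by (C\ref{conf:diamondc3b}), so together with $z_1,z_2$ each contributing at least $1$ the diamond again gets at least $6$ (or is hungry and recovers the rest).

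The main obstacle is the bad-$3$-face branch with a degree-$5$ diamond endpoint $x$: there the face adjacent to $t_1$ and an incident large face at $x$ can itself be a hungry $3$-face of $x$ rather than a large face, so (R6) no longer yields $t_1$ the full amount, and one must show — balancing the exact charges of (R2B), (R3B)--(R3D) against the tie-breaking in (R6) and (R7) — that the diamond together with the competing bad $3$-face ends nonnegative, which is precisely what the compound configurations (C\ref{conf:firstcompound})--(C\ref{conf:last}) are engineered to permit. Fixing the orientation conventions in (R5)--(R7) and verifying that each surviving degree pattern balances is the bulk of the proof.
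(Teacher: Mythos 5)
Your proposal follows the same overall strategy as the paper (account the vertex rules (R1)--(R4) into the diamond, then harvest $\frac{1}{7}$'s from the surrounding large faces via (R5)--(R7), using the forbidden configurations to kill the tight patterns), but as written it has a genuine gap: every place where the accounting is actually tight you defer the work. Phrases like ``except in finitely many tight patterns ruled out by (C\ref{conf:diamondc3c}) and the compound configurations,'' ``make the sum nonnegative,'' and your closing admission that ``verifying that each surviving degree pattern balances is the bulk of the proof'' are precisely the content of the claim; the paper's proof consists almost entirely of those verifications (its Case 2 subcases and the sub-subcases on the faces $s_0,s_3$ adjacent across the outer edges of the diamond). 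In particular, your blanket assertion that when $d(x)=d(y)=4$ a hungry diamond receives at least $\frac{8}{7}$ from its incident large faces ``via (R5) together with (R6)/(R7) at $x$ and $y$'' is not justified: (R6) is orientation-dependent, and its mirror (R7) carries an extra hypothesis (the face $f_{i+1}$ beyond must be non-hungry, or the next vertex must be high). When the face across an outer edge of the diamond at a tip $z_i$ is itself a 3-face lying in a hungry diamond, that extra $\frac{1}{7}$ is simply not available, and showing this cannot happen is exactly what (C\ref{conf:c3c3}), (C\ref{conf:threec3s}), (C\ref{conf:diamondc3c}), and (C\ref{conf:diamondNEW2c3s})--(C\ref{conf:diamondNEWc3chainPLUS}) are invoked for in the paper; you cite them but never carry out the argument that the competing faces are not hungry. (Note also that the paper never gets four extra $\frac{1}{7}$'s; in the analogous cases it proves only three, which is why the vertex-side accounting must be done exactly.)

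A second concrete slip: in your case where one endpoint of the shared edge is low ($d(x)=3$, hence $d(y)\ge 5$ by (C\ref{conf:crown}) and $d(z_1),d(z_2)\ge 4$ by (C\ref{conf:c3-1high})), you claim the diamond ``again gets at least $6$'' from $y$, $z_1$, $z_2$. It does not: $x$ contributes nothing, $y$ contributes at most $3$, and when $d(z_1)=d(z_2)=4$ each $z_i$ may contribute only $1$ (e.g.\ under (R2A) or (R1) with two small faces), so the vertex rules can leave the diamond at $-1$. This is the hardest case in the paper (its Case 2.iii with $d(z_1)=d(z_2)=4$ and 3-faces $s_0,s_3$ adjacent across $z_1$ and $z_2$), where one must show, via a chain of compound configurations, that $s_0$, $s_3$, and even the next faces along are not hungry so that (R6) and (R7) deliver the missing $\frac{3}{7}$. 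Until that analysis is written out, the claim is not proved.
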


\begin{proof}
Let the diamond have vertices $xz_1yz_2$ where $xyz_i$ is a 3-face $f_i$ for $i \in \{1,2\}$.
We assume $d(x) \leq d(y)$.
Note that if the diamond does not have nonnegative charge after the vertex rules, then both faces $f_1$ and $f_2$ are hungry.

By our earlier observation, if both faces $f_1$ and $f_2$ are good, then they each receive charge at least 3 from the incident vertices, and the diamond has nonnegative charge after the vertex rules.
We now consider which faces in $f_1$ and $f_2$ are bad.

\begin{mycases}

\mycase{Exactly one 3-face $f_i$ is bad.} 
In this case, we will assume $d(z_1) \leq d(z_2)$, so it must be $f_1$ that is bad, while $f_2$ is good.
Thus, $d(y) \geq d(x) \geq 4$, and $d(z_2) \geq 4$.
Since $f_1$ is bad, $d(z_1) = 3$.
For $v \in \{x,y\}$, let $f_v$ be the face incident to $v$ that follows $f_1$ and $f_2$ in the counterclockwise order around $v$.

If $d(y)\geq 5$ then $y$ contributes at least $\frac{3}{2}+1$ by (R3D), $2\cdot \frac{9}{7}$ by (R3B), or at least $2\cdot \frac{3}{2}$ by (R3A), (R3C), or (R4).
If $d(y) = 4$ then $y$ contributes at least $2$ by (R1), but now (R6) also applies to the face $f_y$, adding an extra contribution of $\frac{1}{7}$. 
The contribution to the diamond is at least $2+\frac{1}{7}$ from the vertex rules applied to $y$ and (R6) applied to the face $f_y$.
By symmetry, the contribution to the diamond is also at least $2+{1\over 7}$ from the vertex rules applied to $x$ and (R6) applied to the face $f_x$. 
By the vertex rules, $z_2$ sends charge at least 1 to $f_2$.
Rule (R5), the four faces adjacent to the diamond send $\frac{1}{7}$ each.
Rule (R6) applies to the face incident to $z_1$ that is not $f_1$, $f_x$, or $f_y$, giving $\frac{1}{7}$ to the diamond.
Thus $\mu^*(f_1)+\mu^*(f_2) \geq -6 + 2(2 + \frac{1}{7}) + 1 + 4\cdot{1 \over 7}  + \frac{1}{7} \geq 0$.

\mycase{Both 3-faces $f_1$ and $f_2$ are bad.}
We consider the degree of $x$ and order $z_1$ and $z_2$ such that $z_1$, $y$, and $z_2$ appear consecutively in the clockwise ordering of the neighbors of $x$.
Observe that when $d(x) > 3$, we have $d(z_1)=d(z_2)=3$ since $f_1$ and $f_2$ are bad.
\begin{subcases}
\subcase{$d(x) \geq 5$.}
By (R3) and (R4), both $x$ and $y$ each send at least $2\cdot\frac{9}{7}$. 
By (R5), the four incident faces contribute charge $4\cdot\frac{1}{7}$ to the diamond and by (R6), two of the faces incident to $z_1$ or $z_2$ contribute at least $2\cdot\frac{1}{7}$.
Thus the final charge on the diamond is $\mu^*(f_1)+\mu^*(f_2) \geq -6  + 4\cdot\frac{9}{7} +4\cdot\frac{1}{7} +2\cdot\frac{1}{7} = 0$.

\subcase{$d(x) = 4$.}
By (R1), the vertex $x$ sends charge $1$ to each face $f_i$.
Since $G$ contains no (C\ref{conf:diamondc3a}), if $d(y)=5$ then $y$ is not incident to a bad 3-face other than $f_1$ and $f_2$.
Thus, $y$ sends charge $\frac{3}{2}$ to each face $f_i$, and after the vertex rules the charge on the diamond is $-6 + 2\cdot 1 + 2\cdot\frac{3}{2} = -1$, and the faces $f_i$ are hungry.
By (R5), the four faces adjacent to $f_1$ and $f_2$ each send charge $1\over 7$ to the diamond.
By (R6), three of the four faces adjacent to $f_1$ and $f_2$  each send charge $\frac{1}{7}$ to the diamond.
Thus, $\mu^*(T_1)+\mu^*(T_2)=-6+2\cdot 1+2\cdot\frac{3}{2}+4\cdot{1\over 7}+3\cdot{1\over 7}\geq 0$.

\subcase{$d(x) = 3$.}
Since $G$ contains no (C\ref{conf:crown}), we have $d(y) \geq 5$.
Let $f$ be the face incident to $z_1, x, z_2$.
If $f$ is a 3-face, then $z_1xz_2y$ is a facial $K_4$, handled in Claim~\ref{k4}.
If $f$ is a 4-face, then $G$ contains a 5-cycle, a contradiction.
Thus, $d(f) \geq 7$, and let $s_0, s_1, s_2, \ldots$ be the faces adjacent to $f$ in cyclic clockwise order where $s_1=f_1$ and $s_2=f_2$.
Since $G$ contains no (C\ref{conf:diamondc3b}), $d(y) = 5$ implies that the vertex $y$ is not adjacent to a bad 3-face other than $f_1$ and $f_2$.
By (R3C), (R3D), and (R4), $y$ sends charge $\frac{3}{2}$ to each face $f_i$. 
By (R5), four faces adjacent to the diamond each contribute at least ${1\over 7}$. 
Since $G$ contains no (C\ref{conf:c3-1high}), it follows that $d(z_i)\geq 4$ for each $i\in\{1, 2\}$.

If $d(z_i)\geq 5$ for some $i\in \{1, 2\}$, then by the vertex rules, $z_1,z_2$ together will contribute at least $1+{10\over 7}$ to the diamond. 
Thus, $\mu^*(f_1)+\mu^*(f_2)\geq -6 + 1+ {10\over 7} + 3 + 4\cdot {1\over 7} \geq 0$. 
We can now assume $d(z_1) = d(z_2) = 4$.

\subsubcase
If $d(s_3) \geq 7$, then $z_2$ sends charge $2$ to $f_2$ by (R1)
and $z_1$ sends charge at least $1$ to $f_1$ by the vertex rules.
Thus $\mu^*(f_1)+\mu^*(f_2)\geq -6 + 2\cdot\frac{3}{2} + 2 + 1 \geq 0$.
By symmetry this also solves the case when $d(s_0)\geq 7$.

\subsubcase If $d(s_3) = 4$, then since $s_3$ and $s_2$ are not a copy of (C\ref{conf:squareplusc3}), the face $s_3$ must be incident to at least two high vertices.
By (R2B), $z_3$ sends charge $10\over 7$ to $f_2$ and by the vertex rules, $z_1$ sends charge at least $1$ to $f_2$.
Thus, $\mu^*(f_1)+\mu^*(f_2)\geq -6+2\cdot\frac{3}{2}+{10\over 7}+1+4\cdot{1\over 7}=0$. 
By symmetry this also solves the case when $d(s_0) = 4$.

\subsubcase If $d(s_0) = d(s_3) = 3$, then since the faces $s_0$ and $f_1$ (or the faces $f_2$ and $s_3$) do not form a copy of (C\ref{conf:c3c3}), the 3-faces $s_0$ and $s_3$ are not bad 3-faces.

Let $z_0$ be the vertex such that $z_0z_1$ is the edge between $f$ and $s_0$; similarly let $z_3$ be the vertex such that $z_3z_2$ is the edge between $f$ and $s_3$. 
Let $w_i$ be the other vertex of $s_i$ different from $z_i$ for $i \in \{0,3\}$.
For $i \in \{0,3\}$, we have $d(z_i) \geq 4$ and $d(w_i) \geq 4$ since $s_i$ is not a bad 3-face.
See Figure~\ref{fig-discharginghelp} for a sketch of the situation.

\begin{figure}[tp]
\begin{center}
\includegraphics{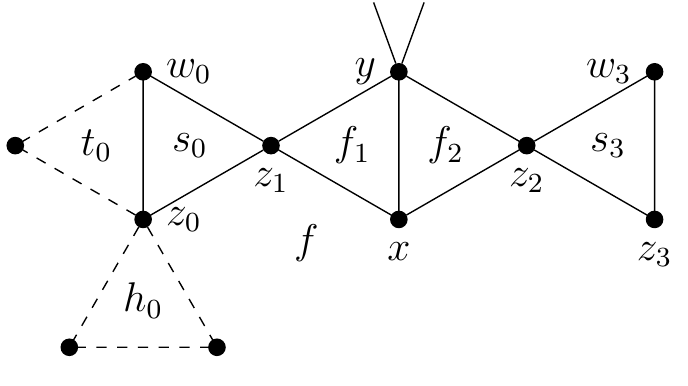}
\caption{Situation in Case 3.iii(c).}
  \label{fig-discharginghelp}
\end{center}
\end{figure}

Let $i$ be in $\{0,3\}$.
If $s_i$ is an isolated 3-face, then it receives charge at least $1$ from each of its incident vertices and is not hungry after the vertex rules.
If $s_i$ is in a diamond with a bad 3-face $t_i$ then since $t_i,s_i,f_{|i-1|}$ is not a copy of (C\ref{conf:diamondc3c}), at least one of $z_i$ and $w_i$ has degree at least 5. By symmetry, assume $z_i$ has degree at least $5$. 
If (R3D) does not apply to $z_i$, then the diamond formed by $s_i,t_i$ receives charge at least 6 from its vertices and is not hungry after the vertex rules.
Hence (R3D) applies to $z_i$ and there must be a bad 3-face $h_i$ incident to $z_i$ that is not $t_i$.
If $w_i$ has degree at least 5, by symmetry, (R3D) applies and $t_i,s_i$ are not hungry. If $w_i$ has degree 4 then the faces $s_i$, $t_i$, $h_i$, and $f_{|i-1|}$ form a copy of (C\ref{conf:diamondNEW2c3s}).
Therefore the diamond $s_i,t_i$ is not hungry after the vertex rules. 
Therefore $s_0$ and $s_3$ are not hungry after the vertex rules.

By (R5), the three faces adjacent to $f_1$ and $f_2$ send charge $4\cdot\frac{1}{7}$ to the diamond.
The rule (R6) applied on $f$ and edge $z_2z_3$ sends charge $\frac{1}{7}$ to $f_2$
and (R6) applied on edge $z_1w_0$ and face containing $z_1w_0$ sendscharge $\frac{1}{7}$ to $f_1$.
Finally, we show that (R7) applies on $z_1z_0$ which gives additional $\frac{1}{7}$. 
If $d(z_0) \geq 5$ then (R7) applies. Suppose that $d(z_0) = 4$. If $s_0$ is in a diamond with $t_0$, then (R6) cannot apply on $z_1z_0$.
If $z_0$ is in another bad 3-face $h_0$, then $h_0,s_0,f_1$ form reducible configuration (C\ref{conf:threec3s}). Hence (R7) indeed applies.
Thus,  $\mu^*(f_1)+\mu^*(f_2)\geq -6 + 2\cdot\frac{3}{2} + 1 + 1 + 4\cdot{1\over 7}+2\cdot{1\over 7}+\frac{1}{7} \geq 0$. 
\end{subcases}
\end{mycases}
In all cases, our diamond has nonnegative total charge.
\end{proof}

\begin{claim}\label{claim:isotri}
For each isolated 3-face $t$, the final charge $\mu^*(t)$ is nonnegative.
\end{claim}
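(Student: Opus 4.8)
The plan is to bound $\mu^*(t)$ from below for an isolated $3$-face $t$, starting from $\mu(t)=d(t)-6=-3$. Since $G$ has no $5$- or $6$-cycles, a $3$-face is never adjacent to a $3$- or $4$-face, so the three faces sharing an edge with $t$ are all large (length at least $7$). Because $(G,L)$ is prime, $t$ avoids $p$ by (C\ref{conf:pC3}) and is incident to at least two high vertices by (C\ref{conf:c3-1high}). If $t$ is good, then by the observation recorded just before Claim~\ref{k4} each of its three incident vertices sends charge at least $1$ to $t$, so $\mu^*(t)\ge -3+3=0$. Hence we may assume $t=xyz$ is bad with $d(z)=3$ and $d(x),d(y)\ge 4$; the low vertex $z$ sends nothing. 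If $t$ already has nonnegative charge after the vertex rules we are done, so we may assume $t$ is hungry, in which case each of its three incident large faces sends it at least $\tfrac17$ by (R5), and (R6)--(R7) may add more.

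Next I would record two per-vertex estimates for the charge a vertex incident to $t$ contributes to $t$. If $d(v)\ge 5$, then inspecting (R3)--(R4) shows the contribution is at least $\tfrac{10}{7}$: the only way it drops below $\tfrac32$ is (R3B), and then $t$ must be the isolated $3$-face of the triple (it is isolated), giving exactly $\tfrac{10}{7}$. If $d(v)=4$, the contribution is at least $1$, and it is below $\tfrac32$ only if either (R2B) applies, giving $\tfrac{10}{7}$ to $t$ -- note (R2A) cannot apply, since a bad $3$-face together with a normal $4$-face whose only high vertex is $v$ is exactly (C\ref{conf:squareplusc3}) -- or (R1) applies with $v$ incident to a second small face $t'$, giving $1$ to $t$; in the latter case $t'$ is a good $3$-face or a special $4$-face, because a second bad $3$-face at the degree-$4$ vertex $v$ would be (C\ref{conf:c3c3}), and in particular $t'$ is not hungry after the vertex rules unless it is a good $3$-face lying in a negatively-charged diamond.

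Then I would split on $\{d(x),d(y)\}$. If $d(x),d(y)\ge 5$, then $x$ and $y$ together contribute at least $\tfrac{20}{7}$, and (R5) contributes at least $\tfrac37$, so $\mu^*(t)\ge -3+\tfrac{23}{7}>0$. If some vertex, say $x$, has degree $4$ and contributes less than $\tfrac32$, I recover the deficiency from the large faces incident to $x$: the two large faces meeting $x$ along edges of $t$ are each adjacent to the non-hungry face $t'$, and the large face across the edge $xy$ is adjacent to $t'$ and to the analogous face at $y$ as well, so (R6)--(R7) redirect the $\tfrac17$'s associated with these non-hungry faces onto $t$; a bookkeeping check then shows $t$ collects total charge at least $3$. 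When both $x$ and $y$ contribute at least $\tfrac32$ the bound is immediate.

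The main obstacle is exactly this last redirection step. Rules (R6) and (R7) move a neighbouring face's $\tfrac17$ onto $t$ only if that neighbour is \emph{not} hungry, and by the estimates above the sole obstruction is that the second small face $t'$ at a degree-$4$ vertex of $t$ is a good $3$-face sitting in a negatively-charged diamond $D$. One then has to show $D$ is in fact not hungry: its other triangle is bad, so both shared vertices of $D$ are high; if one has degree at least $5$ the diamond already receives enough charge under (R3)--(R4) (this is where Claim~\ref{dia2} is invoked), and if both shared vertices have degree $4$ the faces $t$, $t'$, the bad triangle of $D$, and the further bad $3$-face forced at a vertex of $t'$ realize one of the compound configurations (C\ref{conf:diamondNEWc3chain})--(C\ref{conf:diamondNEWc3chainPLUS}), with (C\ref{conf:facialk4c3})--(C\ref{conf:threec3s}) disposing of the shorter chains. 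Enumerating all the ways $t'$ can lie in a diamond, keeping track of which vertex of $t$ forces which configuration, and verifying that no $\tfrac17$ redirected to $t$ is simultaneously claimed by another hungry face, is the technical heart of the proof.
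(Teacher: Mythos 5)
Your overall strategy is the paper's: dispose of good $t$ immediately, reduce the bad hungry case to the situation where both high vertices of $t$ have degree $4$ and each lies on a second small face $t'$, show those second faces are not hungry, and harvest the missing charge through (R5)--(R7). Your per-vertex estimates (at least $\tfrac{10}{7}$ from a vertex of degree at least $5$, the exclusion of (R2A) via (C\ref{conf:squareplusc3}), the exclusion of a second bad triangle at a degree-$4$ vertex via (C\ref{conf:c3c3})) all match the paper. But the two load-bearing steps are wrong or missing. First, your claim that if one shared vertex of the diamond $D\supseteq t'$ has degree at least $5$ then $D$ ``already receives enough charge under (R3)--(R4)'' is false: if that degree-$5$ vertex is incident to a further bad $3$-face, rule (R3D) sends only $\tfrac32+1$ into the diamond, which then totals $\tfrac{11}{2}<6$ after the vertex rules and stays hungry. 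The paper excludes precisely this situation with configuration (C\ref{conf:diamondNEW2c3s}), which you never invoke (your cited ranges skip it); and citing Claim~\ref{dia2} cannot help, since that claim bounds the \emph{final} charge of a diamond, not its charge after the vertex rules, which is what ``hungry'' means. Relatedly, your case ``both shared vertices of $D$ have degree $4$'' is not an instance of (C\ref{conf:diamondNEWc3chain})--(C\ref{conf:diamondNEWc3chainPLUS}): it is exactly (C\ref{conf:diamondc3c}), because the degree-$4$ tip of $D$ is a vertex of the bad face $t$ itself, so no additional forced bad face enters at this stage.

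Second, even granting that both second small faces are not hungry, the three unconditional (R6) redirections (one per large face adjacent to $t$) yield only $2+\tfrac37+\tfrac37$, still $\tfrac17$ short of $3$. The last seventh must come from (R7), whose side condition forces an inspection one more face along the big face $f$ beyond $t'$ (the paper's $s_{-1}$, its possible diamond partner $t_{-1}$, and the vertex $v$), and it is precisely there that (C\ref{conf:threec3s}), (C\ref{conf:diamondNEWc3chain}) and (C\ref{conf:diamondNEWc3chainPLUS}) are used. Your ``bookkeeping check then shows $t$ collects total charge at least $3$'' and ``no $\tfrac17$ is simultaneously claimed'' gesture at this but do not carry it out; moreover double-claiming is not the danger, since (R5)--(R7) are mutually exclusive per adjacent face by design --- the danger is that (R7) simply fails to apply when $s_{-1}$ is hungry and its shared vertex has degree four. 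So the proposal reproduces the paper's skeleton, but the two decisive verifications --- non-hungriness of $t'$ when a shared vertex has degree exactly $5$, and the (R7) side condition --- are left unproved, and the configurations you name for them are not the right ones.
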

\begin{proof}
Note that $\mu(t)=-3$.
If $t$ is good, then each incident vertex sends charge at least $1$ by the vertex rules and $\mu^*(t)\geq -3+3=0$.
We now assume $t$ is incident to at least one low vertex. 
Moreover, we assume that $t$ is hungry.

Since $G$ contains no (C\ref{conf:c3-1high}), $t$ is incident to exactly one low vertex.
Let $x$, $y$, and $z$ be the vertices incident to $t$ in counter-clockwise order where $x$ is low, and let $f$ be the face sharing the edge $yz$ with $t$.
The three faces adjacent to $t$ each send charge $\frac{1}{7}$ to $t$ by (R5).
The faces having $zx$ in common with $t$ sends charge $\frac{1}{7}$ to $t$ by (R6).

If one of $y$ and $z$ has degree at least 5, then $y$ and $z$ together send charge at least $1+{10\over 7}$ to $t$ by the vertex rules. 
Thus, $\mu^*(t)\geq -3+1+{10\over 7}+3\cdot{1\over 7}+{1\over 7} \geq 0$.

We now assume $d(y) = d(z) = 4$.
Let $s_0, s_1, s_2, \ldots $ be the faces adjacent to $f$ in clockwise order so that $s_1=t$.
Observe $y$ is incident to $s_0$ and $s_1$, while $z$ is incident to $s_1$ and $s_2$.

If $d(s_2)\geq 7$, then $z$ sends charge $2$ to $t$ by (R1) and $y$ sends charge at least $1$ to $t$  by the vertex rules, so $\mu^*(t)\geq -3+2+1=0$. 
Hence $d(s_2) \leq 4$ and by symmetry $d(s_1) \leq 4$.

If $d(s_2)=4$, then since $G$ contains no (C\ref{conf:squareplusc3}), $s_2$ must be incident to at least two high vertices.
Thus, $z$ sends charge ${10\over 7}$ to $t$ by (R2B), and $y$ sends charge at least $1$ to $t$ by the vertex rules. 
Therefore,
$\mu^*(t)\geq -3+{10\over 7}+1+3\cdot{1\over 7}+\frac{1}{7} \geq 0$. 
Hence $d(s_2) =3 $ and by symmetry, also $d(s_0) = 3$.

Since neither the pair $s_0$ and $s_1$, nor the pair $s_1$ and $s_2$ form a copy of (C\ref{conf:c3c3}),
the faces $s_0$ and $s_2$ are good 3-faces.
Let $V(s_0) = \{u,v,y\}$ so that $vy$ is the edge between $s_0$ and $f$. 
See Figure~\ref{fig-discharginghelp_c3}, for an example of this situation.

\begin{figure}[tp]
\begin{center}
\includegraphics{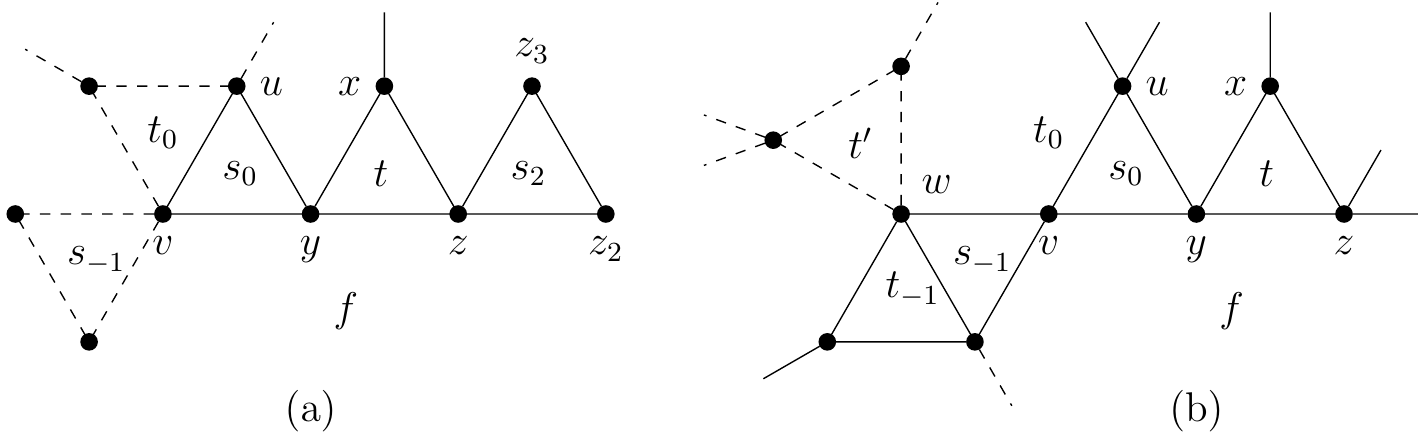}
\caption{Claim~\ref{claim:isotri} where $d(s_0) = d(s_2) = 3$.}
  \label{fig-discharginghelp_c3}
\end{center}
\end{figure}

Suppose that $s_0$ is hungry after vertex rules. Since $s_0$ is not a bad face, it must be in
a diamond with a bad face $t_0$. Since (C\ref{conf:diamondc3c}) is reducible, at least
one of $u,v$ has degree at least 5. If both $u$ and $v$
have degree at least 5 or one has degree at least 6, 
then $t_0$ and $s_0$ receive enough charge after the vertex rules and $s_0$ is not hungry.
Hence without loss of generality, assume $d(v) = 5$ and $d(u)=4$.
If $v$ is incident to another bad face $s_{-1}$, then $s_0,s_{-1},t_{0},s_{t}$ form a reducible configuration (C\ref{conf:diamondNEW2c3s}). 
Hence $s_{-1}$ is not a bad 3-face, so (R3C) applies and the faces $s_0$ and $t_0$ are not hungry.

By a symmetric argument, $s_2$ is also not hungry after the vertex rules.

Recall that $d(x) = 3$, $d(y) = d(z) = 4$, and $s_0$ and $s_2$ are good 3-faces.
Thus, $y$ and $z$ send charge $1$ to $t$ by (R1).
By (R6), the three faces adjacent to $t$ gives charge ${3\over 7}$ to $t$.
If $d(v)\geq 5$, then (R7) applies to $f$ and $f$ contributes $\frac{1}{7}$ to $t$. 
Hence $d(v) = 4$. 
If $s_{-1}$ is not hungry, then again (R7) is applied and $f$ contributes $\frac{1}{7}$.
Hence $s_{-1}$ is hungry. Therefore, $s_{-1}$ is a 3-face and $s_{0}$ is an isolated 3-face.
If $s_{-1}$ is a bad triangle, then $s_{-1},s_{0},t$ form (C\ref{conf:threec3s}) which is reducible.
Hence $s_{-1}$ is not bad and it forms a diamond with a bad 3-face $t_{-1}$.
See Figure~\ref{fig-discharginghelp_c3}(b).
If both vertices shared by $s_{-1}$ and $t_{-1}$ have degree four, faces
$s_{-1},t_{-1},s_{0},t$ form configuration (C\ref{conf:diamondNEWc3chain}). 
If both shared vertices have degree at least 5, then the diamond has nonnegative charge after the vertex rules so $s_{-1}$ cannot be hungry.

Hence one shared vertex $w$ has degree 5 and the other is of degree four.
If $w$ is not incident to any other bad 3-face, then the faces $s_{-1}$ and $t_{-1}$ are not hungry. 
If $w$ is in a bad 3-face $t'$, then $t'$, $s_{-1}$, $t_{-1}$, $s_{0}$, and $t$ form a copy of (C\ref{conf:diamondNEWc3chainPLUS}).
Therefore, (R7) applies and contributes $\frac{1}{7}$.

Thus $\mu^*(t) \geq -3 + 1 + 1 + 3\cdot\frac{1}{7} + 3\cdot\frac{1}{7} + \frac{1}{7} \geq 0$.
\end{proof}


\subsection{Reducibility}\label{subsec:reduce}

We now show that any minimum counterexample $(G,L)$ to Theorem~\ref{thm:c5c6_up} is prime.
Since we already proved that no pair $(G,L)$ is prime, this shows that no counterexample exists.

We start by proving basic properties of $G$.
If $G$ is not connected, there exists an $L$-coloring for every connected
component of $G$ which together give an $L$-coloring of $G$.
Hence $G$ is connected.

Suppose that $G$ has a cut-vertex $v$. Let $G_1$ and $G_2$
be proper subgraphs of $G$ such that $G_1 \cap G_2 = v$, 
$G = G_1 \cup G_2$ and $p \in G_1$. By the minimality of $G$,
there exists an $L$-coloring $\varphi$ of $G_1$. 
Let $L'$ be lists on $G_2$ where $L'(v) = \varphi(v)$ and
$L'(u) = L(u)$ for $u \in V(G_2)-v$. By the minimality of $G$,
there exists an $L'$-coloring $\psi$ of $G_2$. Colorings
$\varphi$ and $\psi$ together give an $L$-coloring of $G$, a contradiction.
Hence $G$ is 2-connected.

Suppose $v\in V(G)- p$ has degree at most two.
An $L$-coloring $\varphi$ of $G - v$ can be extended to $v$ since $|L(v)| \geq 3$.
Hence $d(v) \geq 3$ for every $v \in V(G)-p$.

Suppose $d(p)=1$. Let $v$ be the neighbor of $p$.
Since $G$ is 2-connected, $v$ is not a cut-vertex.
So $V(G)=\{p,v\}$ and $G$ is $L$-colorable.
Hence $d(p) \geq 2$.

By the minimality of $G$, lists of endpoints of every edge $e \in E(G)$
have a color in common. If not, $e$ can be removed from $G$ without
changing possible $L$-colorings. We denote the color shared
by the endpoints of $e$ by $c(e)$.

\begin{lemma}\label{lem:vcolors}
There is no vertex $v$ with a color $c \in L(v)$ not appearing on the edges incident to $v$.
\end{lemma}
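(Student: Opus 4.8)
The plan is to argue by contradiction via the minimality of the counterexample $(G,L)$. Suppose some vertex $v$ has a color $c \in L(v)$ that does not appear as $c(e)$ on any edge $e$ incident to $v$; equivalently, $c \notin L(u)$ for every neighbor $u$ of $v$ (since for an edge $e=uv$ the shared color $c(e)$ is the unique common color of $L(u)$ and $L(v)$, and $|L(u)\cap L(v)|\le 1$). First I would reduce to a smaller instance by deleting $v$: consider $G' = G - v$ with the same list assignment $L$ restricted to $V(G')$. If $p \neq v$, then $G'$ still contains $p$ with $|L(p)|=1$ and every other vertex has a list of size $3$, so $(G',L)$ satisfies the hypotheses of Theorem~\ref{thm:c5c6_up}; by the minimality of $G$ there is an $L$-coloring $\varphi$ of $G'$. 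Since $c$ does not belong to any neighbor's list, none of the neighbors of $v$ is colored $c$ by $\varphi$, so setting $\varphi(v) = c$ extends $\varphi$ to an $L$-coloring of $G$, contradicting that $G$ is a counterexample.

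The remaining case is $v = p$. But $|L(p)| = 1$, so the single color $c$ in $L(p)$ must, by the edge-color property established just before the lemma, appear as $c(e)$ for every edge $e$ incident to $p$ — indeed $c(e)$ is the common color of the endpoint lists and $c \in L(p)$ forces $c(e) = c$. Hence there is no such color $c$ at $p$, and the case $v = p$ cannot arise. (Alternatively one notes $d(p) \geq 2$ so $p$ has at least one incident edge, and that edge's shared color is forced to be the unique element of $L(p)$.)

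I do not expect a genuine obstacle here; the only point requiring a moment's care is the translation between "$c$ does not appear on an edge incident to $v$" and "$c$ is in no neighbor's list," which uses that each edge has a well-defined shared color $c(e)$ (from the paragraph preceding the lemma) together with the $(*,1)$ condition $|L(u)\cap L(v)| \leq 1$. Once that equivalence is in hand, the deletion argument is immediate, and the boundary case $v=p$ is handled by the trivial observation that a list of size one cannot contain a color missing from all incident edges.
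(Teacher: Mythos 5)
Your proposal is correct and takes essentially the same approach as the paper: delete $v$, obtain an $L$-coloring of $G-v$ by minimality, and extend it by assigning $c$ to $v$. The only addition is your explicit check that the case $v=p$ is vacuous (since the unique color in $L(p)$ must equal $c(e)$ for every incident edge $e$); the paper leaves this implicit, but it is a harmless and reasonable clarification.
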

\begin{proof}
Suppose $v \in V(G)$ has a color $c \in L(v)$ not appearing in the lists of the adjacent vertices. Let $\varphi$ be an $L$-coloring of $G-v$.
An $L$-coloring of $G$ can be obtained by assigning assigning $\varphi(v) = c$.
\end{proof}

\begin{lemma}\label{lem:trail}
$G$ does not contain a trail of three edges $e_1e_2e_3$ where $c(e_1) = c(e_3) \neq c(e_2)$.
\end{lemma}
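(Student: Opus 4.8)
The plan is to argue by contradiction using the minimum counterexample $G$ to Theorem~\ref{thm:c5c6_up}. Write the offending trail as $e_1=v_0v_1$, $e_2=v_1v_2$, $e_3=v_2v_3$ with $c(e_1)=c(e_3)=a$ and $c(e_2)=b\ne a$; recall that by minimality every edge's endpoints share exactly one color, namely $c(\cdot)$, so in particular $a\in L(v_0)\cap L(v_1)\cap L(v_2)\cap L(v_3)$ and $b\in L(v_1)\cap L(v_2)$. First I would clear away the degenerate possibilities. Since $e_1,e_2,e_3$ are distinct we cannot have $v_0=v_2$ or $v_1=v_3$; and if $v_0=v_3$ then $v_0v_1v_2$ is a triangle with $\{a,b\}\subseteq L(v_1)\cap L(v_2)$, contradicting the $(*,1)$ condition on the edge $v_1v_2$. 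Also $v_1\ne p$ and $v_2\ne p$, since otherwise the size-one list at $p$ would contain both $a$ and $b$, forcing $a=b$. Hence $v_0,v_1,v_2,v_3$ are four distinct vertices, all of list size $3$, and $v_0v_1v_2v_3$ is a path.

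The main reduction is to delete the middle edge $e_2$. The graph $G-e_2$ is still a plane graph with no $5$- or $6$-cycles and the same lists, so by minimality it has an $L$-coloring $\psi$. Since $G$ has no $L$-coloring, $\psi$ must fail exactly on $e_2$, i.e.\ $\psi(v_1)=\psi(v_2)$, and this common value lies in $L(v_1)\cap L(v_2)=\{b\}$; thus $\psi(v_1)=\psi(v_2)=b$, and this forced behaviour holds for every $L$-coloring of $G-e_2$. Now I would try to recolor one of $v_1,v_2$ to break the tie: if some color of $L(v_1)\setminus\{b\}$ is unused on $N_G(v_1)\setminus\{v_2\}$, recoloring $v_1$ there produces an $L$-coloring of $G$, and symmetrically for $v_2$. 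So assume both recolorings are blocked; in particular the color $a\in L(v_1)\setminus\{b\}$ appears on some neighbor $x$ of $v_1$ with $x\ne v_2$ (so $c(v_1x)=a$), and likewise $a$ appears on some neighbor $y\ne v_1$ of $v_2$ (so $c(v_2y)=a$). The hypothesis $c(e_1)=c(e_3)=a$ is exactly what is used here: either $x=v_0$ (so $\psi(v_0)=a$) or $v_1$ carries a second edge of color $a$, namely $v_1x$ together with $v_1v_0$; and symmetrically at $v_2$ with $v_3$.

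The remaining — and hardest — step is to eliminate this last configuration. I would push on it in two ways: by attempting a Kempe-type recoloring on color $a$ to move $\psi(v_0)$ (or $\psi(v_3)$) off $a$ and thereby free up $a$ at $v_1$ (or $v_2$); and by observing that a second $a$-edge $v_1x$ gives a new trail $x v_1 v_2 v_3$ of the forbidden pattern $a,b,a$, which — after choosing the original trail so that no such shorter witness exists, or by applying the $(*,1)$ separation to the endpoints $v_0,x$ of the two $a$-edges at $v_1$ and to the low-cycle structure — yields a contradiction. The technical heart of the argument is precisely showing that $v_0$ and $v_3$ being driven to color $a$ by $e_1$ and $e_3$ is incompatible with $v_1$ and $v_2$ being driven to color $b$ by the colorings of $G-e_2$; this is where the equality $c(e_1)=c(e_3)$ (rather than two unrelated colors) is essential, and where I expect to invoke the absence of $5$- and $6$-cycles and Lemma~\ref{lem:vcolors} to control the auxiliary edges of color $a$.
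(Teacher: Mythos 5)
You already wrote down the winning observation in your second sentence and then walked past it. From the minimality-based fact that the endpoints of each edge share exactly the color $c(\cdot)$, you note that $a \in L(v_1)$ (via $e_1$) and $a \in L(v_2)$ (via $e_3$), and that $b \in L(v_1)\cap L(v_2)$ (via $e_2$). Hence $\{a,b\} \subseteq L(v_1)\cap L(v_2)$ with $a\neq b$, contradicting the $(*,1)$-condition on the edge $e_2 = v_1v_2$. That is the entire proof. It is exactly the contradiction you invoke in the degenerate case $v_0=v_3$, but nothing in that derivation used $v_0=v_3$, so it applies to every trail with this color pattern, and the paper's proof is precisely this two-line observation and nothing more.

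The rest of your argument --- deleting $e_2$, showing every $L$-coloring of $G-e_2$ forces $\psi(v_1)=\psi(v_2)=b$, and then attempting recolorings and Kempe chains on color $a$ --- is therefore superfluous, and it is also incomplete: the paragraph beginning ``The remaining --- and hardest --- step'' names techniques you expect to help but never closes the argument. Even if that line could be finished it would be a far heavier tool than the lemma needs. The right move is to delete the coloring machinery entirely and promote your degenerate-case observation to the general case.
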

\begin{proof}
Suppose $G$ contains a trail of three edges $e_1e_2e_3$ where $c(e_1) = c(e_3) \neq c(e_2)$. 
The lists of the two endpoints of $e_2$ both contain the colors $c(e_1)$ and $c(e_2)$, 
which is a contradiction to the $L$-list assignment if $c(e_1) \neq c(e_2)$. 
\end{proof}

\begin{lemma}\label{lem:lowtriangle}
$G$ does not contain a 3-face $e_1e_2e_3$ incident to a low vertex with $c(e_1) = c(e_2)$.
\end{lemma}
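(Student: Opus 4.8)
\textbf{Proof plan for Lemma~\ref{lem:lowtriangle}.}
The plan is to argue by contradiction using the minimality of $G$, following the same pattern as Lemmas~\ref{lem:vcolors} and~\ref{lem:trail}. Suppose $G$ contains a 3-face with boundary edges $e_1e_2e_3$ incident to a low vertex $x$, where $c(e_1)=c(e_2)$. Since $x$ is incident to the 3-face, two of the three edges of the face are incident to $x$; label the vertices of the face $x$, $y$, $z$ so that the edges incident to $x$ are $xy$ and $xz$ and the third edge is $yz$. I would first identify which of $e_1,e_2,e_3$ are the edges at $x$: since $c(e_1)=c(e_2)$, Lemma~\ref{lem:trail} forbids the configuration where $e_1$ and $e_2$ are the two ``outer'' edges of a path of length three through the face with $e_3$ in the middle, so the natural reading is that $e_1=xy$ and $e_2=xz$ are the two edges meeting at $x$, and hence $c(xy)=c(xz)$. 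The key point to extract is then: the two neighbors $y,z$ of $x$ on this face must each receive the color $c(e_1)$ under any $L$-coloring forced through the face, and since $y$ and $z$ are adjacent (the edge $yz=e_3$ exists), this is immediately contradictory \emph{provided} we can actually force $y$ and $z$ to take that color.

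To force that, I would delete $x$ from $G$ and $L$-color $G-x$ by minimality, obtaining $\varphi$. Now I want to extend $\varphi$ to $x$: since $d(x)=3$, $x$ has exactly one neighbor $w$ outside the face in addition to $y$ and $z$, so $x$ has three neighbors $y,z,w$ and a list of size $3$. The coloring $\varphi$ fails to extend to $x$ only if $\varphi(y),\varphi(z),\varphi(w)$ are three distinct colors exhausting $L(x)$. Here is where $c(xy)=c(xz)$ enters: the only color in $L(x)\cap L(y)$ is $c(xy)$ (using the $(*,1)$-separation, $|L(x)\cap L(y)|\le 1$, combined with the fact proved just before the lemma that this intersection is nonempty and equals $c(xy)$), and likewise the only color in $L(x)\cap L(z)$ is $c(xz)=c(xy)$; so any color $\varphi(y)\in L(x)$ must equal $c(xy)$, and any color $\varphi(z)\in L(x)$ must also equal $c(xy)$. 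Thus $\varphi(y)$ and $\varphi(z)$, if they both lie in $L(x)$, would be equal, contradicting that $y$ and $z$ are adjacent; hence at most one of $\varphi(y),\varphi(z)$ lies in $L(x)$, so $\{\varphi(y),\varphi(z),\varphi(w)\}\cap L(x)$ has size at most $2$, leaving a free color for $x$. Assigning that color to $x$ extends $\varphi$ to an $L$-coloring of $G$, contradicting that $G$ is a counterexample.

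I expect the main (minor) obstacle to be the bookkeeping of \emph{which} edges of the triangle play the roles of $e_1,e_2,e_3$: the statement only says $c(e_1)=c(e_2)$ for some labeling of the three boundary edges, and one must rule out or handle the case where the two edges with equal color are $xy$ and $yz$ (so that the repeated color is $c(xy)=c(yz)$ with $y$ a common vertex) rather than the two edges at the low vertex. In that alternate case, note $x,y,z$ with edges $xy,yz,zx$ and $c(xy)=c(yz)$: then $y$ has two incident edges of the same color, and applying Lemma~\ref{lem:trail} to the trail $xy,yz,zx$ (whose outer colors are $c(xy)$ and $c(zx)$ with middle color $c(yz)=c(xy)$) forces $c(zx)=c(xy)$ as well, so in fact all three edges share the color, and in particular $c(xy)=c(xz)$ holds for the low vertex $x$ too — reducing to the main case. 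So the ``obstacle'' is really just observing that $c(e_1)=c(e_2)$ on any triangle, together with Lemma~\ref{lem:trail}, forces all three edges to share a color, after which the deletion-of-$x$ argument above applies verbatim.
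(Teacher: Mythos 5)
Your argument is correct but takes a longer route than the paper. The paper first invokes Lemma~\ref{lem:vcolors}: a low vertex $v$ has $|L(v)|=d(v)=3$ and each incident edge supplies exactly one color of $L(v)$, so the three edges at $v$ carry pairwise distinct colors; hence $c(e_1)=c(e_2)$ forces one of $e_1,e_2$ to be the edge opposite $v$ and the other edge $e_3$ at $v$ to satisfy $c(e_3)\neq c(e_1)=c(e_2)$, whereupon walking once around the triangle is a trail forbidden by Lemma~\ref{lem:trail}. You instead apply Lemma~\ref{lem:trail} first, concluding $c(e_1)=c(e_2)=c(e_3)$, deduce $c(xy)=c(xz)$, and then re-derive the consequence of Lemma~\ref{lem:vcolors} from scratch by deleting $x$ and extending an $L$-coloring of $G-x$. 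That extension step is correct but redundant: once $c(xy)=c(xz)$ at a degree-$3$ vertex $x$ with $|L(x)|=3$, at most two of the three colors of $L(x)$ appear on the edges at $x$, which is exactly what Lemma~\ref{lem:vcolors} forbids, so you could stop there. Both proofs rely on the same two lemmas; yours reaches the stronger intermediate fact that all three boundary colors of the triangle are equal, but at the cost of re-proving a special case of Lemma~\ref{lem:vcolors} rather than citing it.
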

\begin{proof}
If $v$ is a low vertex in a 3-face bounded by $e_1e_2e_3$, then by Lemma~\ref{lem:vcolors} the edges incident to $v$ have distinct colors.
Thus, if $c(e_1)=c(e_2)$, they are not both incident to $v$ and $c(e_3) \neq c(e_1) = c(e_2)$, and thus a trail from Lemma~\ref{lem:trail} is contained in $G$.
\end{proof}

\begin{lemma}\label{lma:lowface}
$G$ contains no copy of (C\ref{conf:pC3}). 
\end{lemma}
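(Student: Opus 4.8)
The plan is to argue by contradiction directly from the minimum counterexample $(G,L)$ fixed above. Suppose $G$ contains a copy of (C\ref{conf:pC3}), that is, a $3$-face incident to $p$. Since $G$ is $2$-connected, every face is bounded by a cycle, so this $3$-face is a triangle $pxy$ with $px,xy,yp\in E(G)$. The whole argument will then be a single edge deletion, exploiting that the precolored vertex $p$ forces color information onto $x$ and $y$.

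First I would record the structural consequence of $P=\{p\}$ being precolored. Writing $L(p)=\{c\}$, the observation already established above (the lists of the two endpoints of every edge of $G$ share a color, denoted $c(e)$) gives $c\in L(x)$ and $c\in L(y)$. Since $xy\in E(G)$ and $L$ is a $(*,1)$-list assignment, $|L(x)\cap L(y)|\le 1$, so in fact $L(x)\cap L(y)=\{c\}$ and $c(xy)=c$.

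Next I would apply minimality to $G'=G-xy$. This $G'$ is a plane graph without $5$- or $6$-cycles, the restriction of $L$ to $G'$ is still a $(*,1)$-list assignment with $|L(p)|=1$ and $|L(v)|=3$ for every $v\in V(G')-p$, and $G'$ is strictly smaller than $G$; hence by the minimality of $G$ there is an $L$-coloring $\varphi$ of $G'$. Now $\varphi(p)=c$, and since $px,py\in E(G')$ we have $\varphi(x)\ne c$ and $\varphi(y)\ne c$. Because $L(x)\cap L(y)=\{c\}$ and neither $\varphi(x)$ nor $\varphi(y)$ equals $c$, the colors $\varphi(x)$ and $\varphi(y)$ must differ. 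Therefore $\varphi$ also respects the edge $xy$, so $\varphi$ is a proper $L$-coloring of the whole graph $G$, contradicting that $G$ is a counterexample.

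I do not expect a genuine obstacle here: the only two points needing a moment's care are that a $3$-face of a $2$-connected plane graph is indeed a triangle (so that $xy$ is an actual edge whose removal is meaningful) and that $G-xy$ still satisfies all the hypotheses of Theorem~\ref{thm:c5c6_up}, both of which are immediate. This lemma is really just the base case that lets the later, harder reductions assume $p$ lies on no $3$-face.
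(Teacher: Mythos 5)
Your argument is correct and essentially identical to the paper's: both delete the edge $xy$ of the triangle, color $G-xy$ by minimality, and observe that the precoloring of $p$ forbids $c$ on both $x$ and $y$, while the separation property forces $L(x)\cap L(y)=\{c\}$, so $\varphi(x)\ne\varphi(y)$. The only cosmetic difference is that the paper routes the step $c(xy)=c$ through its Lemma~\ref{lem:trail}, whereas you unpack that lemma's one-line content directly.
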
	
\begin{proof}
Suppose $puv$ is a 3-face. 
Let $c = L(p) = c(pu) = c(pv)$. 
By Lemma~\ref{lem:trail}, also $c(uv)=c$. 
Let $\varphi$ be and $L$-coloring of $G-uv$. 
Since $\varphi(p) = c$, $\varphi(u) \neq c$ and $\varphi(v) \neq c$.
Hence $\varphi(u) \neq \varphi(v)$ and $\varphi$ is an $L$-coloring of $G$.
\end{proof}

We will show that a minimum counterexample $(G,L)$ contains no copy of the configurations (C\ref{conf:lowface})--(C\ref{conf:last}) by using a concrete form of reducibility.
\begin{definition}
A configuration $C$ is \emph{reducible} if there exist disjoint sets $X, R \subseteq V(C)$ where $X$ is nonempty, $p \notin X\cup R$, the set $X \cup R$ contains exactly the vertices of $C$ with at most one neighbor outside $C$, and for every $L$-coloring $\varphi$ of $G - X$, there exists an $L$-coloring $\psi$ of $G$ satisfying the following properties:
\begin{itemize}
	\item $\varphi(v) = \psi(v)$ for all $v \notin X \cup R$, 
	\item if $\varphi(x) \neq \psi(x)$ for some $x \in R$ with one neighbor outside of $C$, then $|L(x) \cap \{ \psi(y) : y \in N(x) \cap V(C) \}| \leq 1$, and
	\item if $\varphi(x) \neq \psi(x)$ for some $x \in R$ with no neighbor outside of $C$, then $|L(x) \cap \{ \psi(y) : y \in N(x) \cap V(C) \}| \leq 2$.
\end{itemize}
\end{definition}
If a graph $G$ contains a copy of a reducible configuration, then it is not a minimum counterexample since $L$-colorings of proper subgraphs extend to $L$-colorings of $G$.
We now use this definition to prove our simple configurations (C\ref{conf:lowface})--(C\ref{conf:lastsimple}) are reducible.

\begin{lemma}\label{lma:lowface}
(C\ref{conf:lowface}) is reducible. 
\end{lemma}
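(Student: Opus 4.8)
The plan is to produce the pair of sets $X,R$ demanded by the definition of reducibility and then to extend an arbitrary precoloring. Write the normal $4$-face as a $4$-cycle $v_1v_2v_3v_4$ with $d(v_i)=3$ for every $i$; since the face is normal, $p\notin\{v_1,\dots,v_4\}$. Each $v_i$ is adjacent to $v_{i-1}$ and $v_{i+1}$ on the face and to exactly one further vertex, so every vertex of the configuration has at most one neighbour outside it. Hence the reducibility definition forces $X\cup R=\{v_1,v_2,v_3,v_4\}$, and I would simply take $X=\{v_1,v_2,v_3,v_4\}$ and $R=\emptyset$. Then $X$ is nonempty, $X$ and $R$ are disjoint, $p\notin X\cup R$, and since $R=\emptyset$ the last two clauses of the reducibility definition are vacuous; what remains is to show that every $L$-coloring $\varphi$ of $G-X$ extends to an $L$-coloring of $G$.

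Given such a $\varphi$, set $L'(v_i)=L(v_i)\setminus\{\varphi(u):u\in N(v_i)\setminus X\}$ for each $i$. Because $v_i$ has at most one neighbour outside $X$, we have $|L'(v_i)|\ge 2$, and any proper $L'$-coloring of the induced subgraph $H:=G[\{v_1,v_2,v_3,v_4\}]$, combined with $\varphi$, is an $L$-coloring of $G$: the edges leaving the configuration are satisfied because the offending colours were deleted from the $L'$-lists, the edges of the $4$-cycle because the coloring of $H$ is proper, and every other edge is untouched. The subgraph $H$ contains the $4$-cycle $v_1v_2v_3v_4$ and is a subgraph of $K_4$; since all $v_i$ have degree $3$, if $H=K_4$ then $G=K_4$, a triangulation with no $4$-face, contradicting the existence of our normal $4$-face. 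Thus $H$ is either $C_4$ or $C_4$ together with a single chord.

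If $H=C_4$, then, the even cycle $C_4$ being $2$-choosable, the size-$\ge 2$ lists $L'$ admit a proper coloring and we are done. If $H$ is $C_4$ plus a chord, say $v_1v_3$, then $v_1$ and $v_3$ have no neighbour outside $X$, so $|L'(v_1)|=|L'(v_3)|=3$, while $v_2$ and $v_4$ are non-adjacent in $H$; here I would colour $v_2$ and $v_4$ first (using their size-$\ge 2$ lists), then $v_1$ avoiding those two colours, then $v_3$ avoiding the three colours already used, and observe that the only way this could fail is ruled out by $|L(v_1)\cap L(v_3)|\le 1$ (valid because $v_1v_3\in E(G)$ and $L$ is a $(*,1)$-list assignment). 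In either case we obtain the required $L$-coloring of $G$.

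The argument is short, and no recoloring occurs, so the delicate clauses of the reducibility definition governing $R$ never enter; those will matter only for the compound configurations treated through Lemma~\ref{glue}. The one point that genuinely needs care is noticing that the $4$-cycle bounding the face may carry a chord, which is why the degenerate $K_4-e$ case has to be handled separately; I expect this bookkeeping, rather than any real difficulty, to be the main thing to get right.
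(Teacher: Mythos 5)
Your choice $X=\{v_1,v_2,v_3,v_4\}$, $R=\emptyset$, and the appeal to $2$-choosability of the even cycle $C_4$ are exactly what the paper does; the $C_4$ case is the entire proof. The extra chord case you add is actually vacuous: at the beginning of Section~4.3 the minimum counterexample $G$ has already been shown to be $2$-connected, and a chord $v_1v_3$ in a $4$-face with all four vertices of degree three would make $v_2$ a cut vertex. Indeed, once $v_1v_3\in E(G)$, the vertices $v_1$ and $v_3$ have all three of their neighbours inside $\{v_1,\dots,v_4\}$, so $v_2$'s third edge lies in the region enclosed by the triangle $v_1v_2v_3$ on the side away from the $4$-face, and that region is reachable only through $v_2$. (You correctly ruled out $K_4$ since it has no $4$-face; the same spirit, via $2$-connectivity, rules out $K_4-e$.) Since the chord case does not arise, its treatment does not affect the correctness of your proof, but as written your handling of it is a bit too quick: you colour $v_2,v_4,v_1$ greedily and assert that failure at $v_3$ ``is ruled out by $|L(v_1)\cap L(v_3)|\le 1$,'' whereas a greedy choice of $c_1$ can put $c_1$ into $L(v_3)$ even under that constraint, and one needs to argue either that $c_1$ can be chosen outside $L(v_3)$ whenever there is a choice, or that a forced $c_1$ together with $\{c_1,c_2,c_4\}=L(v_3)$ would force $L(v_1)=L(v_3)$, contradicting the separation condition. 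In short: correct in substance and essentially the paper's argument, with a superfluous and slightly under-justified side case.
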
	
\begin{proof}
Let $F=v_1v_2v_3v_4$ be a normal 4-face where $d(v_i) = 3$ for $i \in \{1,2,3,4\}$; see Figure~\ref{fig-reducible-noted}.
Let $X = F$ and $R = \varnothing$ and let $\varphi$ be an $L$-coloring of $G-X$.
Each vertex $v_i$ has at most one color forbidden by $\varphi$, which implies that $|L_\varphi(v_i)|\geq 2$.
The only case when a cycle is not $2$-choosable is when the cycle has odd length and each vertex has the same list of size $2$. 
Hence $\varphi$ can be extended to an $L$-coloring $\psi$ of $G$.
\end{proof}

\begin{figure}[tp]
\begin{center}
\includegraphics{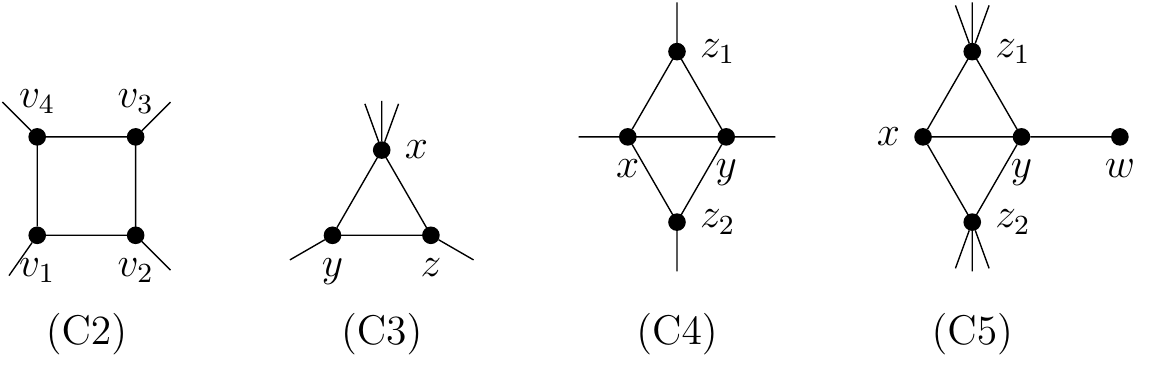}
\caption{Situations in Lemmas~\ref{lma:lowface}, \ref{lem:c3-1high}, \ref{lem:baddiamond}, and \ref{lem:crown}.}
  \label{fig-reducible-noted}
\end{center}
\end{figure}

\begin{lemma}\label{lem:c3-1high}
(C\ref{conf:c3-1high}) is reducible. 
\end{lemma}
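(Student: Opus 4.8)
The plan is to show that configuration (C\ref{conf:c3-1high})---a 3-face incident to at most one high vertex---is reducible by deleting the low vertices on the face and extending any coloring of the rest. Let $T=xyz$ be the 3-face, and recall by Lemma~\ref{lem:lowtriangle} that a low vertex on a 3-face cannot be incident to two edges of the same color, so the three edges of $T$ do not all share one color. I would split into two cases according to how many of $x,y,z$ are low.

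First, suppose all three vertices of $T$ are low. Each of $x,y,z$ has exactly one neighbor outside $T$, so I would set $X=\{x,y,z\}$ and $R=\varnothing$. Given an $L$-coloring $\varphi$ of $G-X$, each vertex $v\in\{x,y,z\}$ loses at most one color from its list (the color of its unique outside neighbor), leaving lists $L_\varphi$ of size at least two on the triangle $xyz$. A triangle with lists of size two fails to be colorable only if all three lists are the same $2$-set; but then the two edges incident to any one vertex, say $x$, would have the same color $c(xy)=c(yz)$ forced, contradicting Lemma~\ref{lem:lowtriangle} (or Lemma~\ref{lem:trail}). Hence $\varphi$ extends.

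Second, suppose exactly one of $x,y,z$ is high---say $z$ is high and $x,y$ are low. Then $x$ and $y$ each have one neighbor outside $T$, and $z$ may have many. I would take $X=\{x,y\}$ and $R=\varnothing$ (since $X$ must contain exactly the vertices of $C$ with at most one neighbor outside $C$, and $z$ has more). Let $\varphi$ be an $L$-coloring of $G-X$. Deleting $x$ and $y$: vertex $x$ loses at most the color on its outside neighbor plus the color $\varphi(z)$, but by Lemma~\ref{lem:vcolors}/Lemma~\ref{lem:lowtriangle} the edge $xz$ and the edge to $x$'s outside neighbor have distinct colors, so at most two distinct colors are forbidden at $x$, leaving $|L_\varphi(x)|\geq 1$; similarly $|L_\varphi(y)|\geq 1$. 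More carefully, I want to color $x$ and then $y$ (a path on two vertices): after choosing $\varphi(x)\in L(x)$ avoiding $\varphi(z)$ and $x$'s outside neighbor's color, vertex $y$ must avoid $\varphi(z)$, $\varphi(x)$, and its outside neighbor's color. Since $|L(y)|=3$, this works unless those three forbidden colors are distinct and exhaust $L(y)$; I would rule this out using that $|L(x)\cap L(y)|\le 1$ and $|L(y)\cap L(z)|\le 1$, so the color $c(xy)$ lies in $L(x)\cap L(y)$ and is the only such color, while the constraints from $\varphi(z)$ and the outside neighbors leave enough freedom---choosing $\varphi(x)=c(xy)$ when possible collapses two of $y$'s forbidden colors into one. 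The bookkeeping here, ensuring the $(*,1)$ separation condition always leaves a valid choice for the second low vertex, is the main obstacle; everything else is a short list-size count.

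Finally I would note that since (C\ref{conf:c3-1high}) is reducible, $G$ being a minimum counterexample forces $G$ to contain no copy of it, as required. I expect the delicate point to be the case of exactly one high vertex: one must use the separation hypothesis $|L(u)\cap L(v)|\le 1$ on the edge $xy$ together with Lemma~\ref{lem:vcolors} and Lemma~\ref{lem:lowtriangle}, rather than a naive count, since a naive count alone gives only $|L_\varphi(y)|\ge 0$ in the worst arrangement.
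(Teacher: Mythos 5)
Your proposal splits into cases that the paper handles uniformly, and the case you flag as delicate is indeed where your argument is incomplete; the sketched fix does not close it. In the paper's proof one simply lets $y$ and $z$ be two low vertices of the 3-face (the third vertex $x$ may or may not be high), sets $X=\{y,z\}$, $R=\varnothing$, and uses the one observation you are missing: since $xyz$ is a 3-face with a low vertex, Lemma~\ref{lem:lowtriangle} gives $c(xy)\neq c(xz)$, so $\varphi(x)$ cannot equal both. Without loss of generality $\varphi(x)\neq c(xz)$, whence $\varphi(x)\notin L(z)$ and $z$ loses at most the one color $c$ of its outside edge, leaving $|L_\varphi(z)|\geq 2$, while $|L_\varphi(y)|\geq 1$ as you computed. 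Coloring $y$ first and then $z$ finishes. This single dichotomy also covers your all-low case, so no split is needed.

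In your Case 2 the heuristic ``choosing $\psi(x)=c(xy)$ when possible collapses two of $y$'s forbidden colors into one'' is actually backwards: setting $\psi(x)=c(xy)$ is the choice that does remove a color from $L(y)$, so if anything you would want $\psi(x)\neq c(xy)$. But even the corrected heuristic does not close the gap on its own; one still has to invoke the dichotomy above to see that whenever you are forced to take $\psi(x)=c(xy)$ (because $\varphi$ hits both $c(xz)$ and the color on $x$'s outside edge), the coloring $\varphi(z)=c(xz)\neq c(yz)$ frees up $y$ to size at least $2$. So the missing idea is precisely that the high vertex's color cannot simultaneously threaten both low vertices of the triangle, which is what the paper cites Lemmas~\ref{lem:vcolors}/\ref{lem:trail} (equivalently Lemma~\ref{lem:lowtriangle}) for. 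Your Case 1 is fine but can be shortened: the $(*,1)$ hypothesis already forces $|L(x)\cap L(y)|\le 1$, so the residual lists of size two on the triangle can never all coincide.
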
	
\begin{proof}
Let $xyz$ be a 3-face where $y$ and $z$ have degree 3; see Figure~\ref{fig-reducible-noted}.
Let $X = \{y,z\}$ and $R = \varnothing$, and let $\varphi$ be an $L$-coloring of $G-X$.
By Lemmas~\ref{lem:vcolors} or \ref{lem:trail}, the color $\varphi(x)$ is not equal to both $c(xy)$ and $c(xz)$.
Thus, without loss of generality, we assume $\varphi(x)\neq c(xz)$.
Observe $|L_\varphi(y)| \geq 1$ and $|L_\varphi(z)| \geq 2$, and thus we can color $y$ and then $z$ to find an $L$-coloring $\psi$ of $G$.
\end{proof}

\begin{lemma}\label{lem:baddiamond}
(C\ref{conf:baddiamond}) is reducible. 
\end{lemma}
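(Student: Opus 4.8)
The plan is to exhibit the reducibility sets $X=\{x,y,z_1,z_2\}$ (the four vertices of the diamond) and $R=\varnothing$. Write the configuration as two triangles $xyz_1$ and $xyz_2$ sharing the edge $xy$, with $d(z_1)=d(z_2)=3$ and $d(x)=d(y)=4$; thus $G[X]$ is a copy of $K_4$ with the edge $z_1z_2$ deleted. Inside the configuration $C$ each vertex is adjacent to exactly two or three of the others, so each of $x,y,z_1,z_2$ has exactly one neighbour outside $C$; hence $X\cup R=V(C)$ is precisely the set of vertices of $C$ with at most one neighbour outside $C$, as the definition of a reducible configuration requires. Also $p\notin X\cup R$, since $p$ lies in no $3$-face (the graph contains no copy of (C\ref{conf:pC3})) and therefore in no diamond.

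Next I would take an $L$-coloring $\varphi$ of $G-X$, which exists because $G-X$ is a proper subgraph of the minimum counterexample. For $v\in X$ let $v^\ast$ denote its unique neighbour outside $C$ and set $L'(v)=L(v)\setminus\{\varphi(v^\ast)\}$; since $|L(v)|=3$ we have $|L'(v)|\ge 2$, and $L'$ still satisfies $|L'(u)\cap L'(v)|\le 1$ on every edge because $L'(v)\subseteq L(v)$. It then suffices to find an $L'$-coloring of $G[X]=K_4-z_1z_2$: combined with $\varphi$ it yields an $L$-coloring $\psi$ of $G$, since the only edges not already checked are those leaving $X$, and on these $\psi(v)\in L'(v)$ guarantees $\psi(v)\ne\varphi(v^\ast)$. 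With $R=\varnothing$ the two recolouring conditions in the definition of a reducible configuration hold vacuously, so this makes (C\ref{conf:baddiamond}) reducible.

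Thus the whole proof reduces to the combinatorial fact that $K_4$ minus an edge is $L'$-colorable for every list assignment $L'$ whose lists all have size at least $2$ and satisfy $|L'(u)\cap L'(v)|\le 1$ on every edge. To see this, colour $x$ and $y$ first with distinct colours $\alpha\in L'(x)$, $\beta\in L'(y)$; afterwards each $z_i$ (adjacent in $G[X]$ to exactly $x$ and $y$) is colourable precisely when $L'(z_i)\ne\{\alpha,\beta\}$, which is automatic unless $|L'(z_i)|=2$. So it is enough to choose the pair $\{\alpha,\beta\}$ avoiding the at most two ``bad pairs'' $L'(z_1)$ and $L'(z_2)$, and then colour $z_1,z_2$ greedily. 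If no admissible pair existed, then for every colour $e\in L'(x)$ each colour of $L'(y)\setminus\{e\}$ would have to complete $e$ to $L'(z_1)$ or to $L'(z_2)$; a short case analysis — split on whether $L'(z_1)=L'(z_2)$ and, if not, on whether $|L'(z_1)\cup L'(z_2)|$ equals $3$ or $4$ — shows that in each case this forces $L'(x)=L'(y)$ or $L'(x)=L'(z_i)$ for some $i$, contradicting the separation hypothesis on an edge of $K_4-z_1z_2$.

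I expect this final case analysis to be the only real work; everything else is routine bookkeeping in the reducibility framework. The subtlety is that every branch of the analysis must terminate in a violation of $|L'(u)\cap L'(v)|\le 1$ rather than in a genuinely uncolourable instance — which is exactly where the separation hypothesis is indispensable, since $K_4$ minus an edge contains a triangle and so is not $2$-choosable without it.
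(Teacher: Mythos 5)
Your proposal is correct, and it uses the same reduction data as the paper ($X$ equal to all four diamond vertices, $R=\varnothing$, extend an $L$-coloring of $G-X$ using the residual lists of size at least two), but it finishes differently. The paper completes the extension by invoking the minimality machinery: every edge $e$ has a shared color $c(e)$, Lemma~\ref{lem:lowtriangle} guarantees $c(xy)\neq c(xz_i)$, and then either the edge $xy$ is unconstraining (so one list-colors the $4$-cycle $z_1xz_2y$, which is $2$-choosable) or one colors $x$ with $c(xy)$, a color absent from $L(z_1)\cup L(z_2)$, and finishes greedily. You instead prove a self-contained statement: $K_4$ minus the edge $z_1z_2$ is colorable from any lists of size at least two satisfying the separation condition on its edges. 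That claim is true, and your sketched case analysis can in fact be compressed to a counting argument: since $|L'(x)\cap L'(y)|\leq 1$, there are at least three ordered pairs $(\alpha,\beta)\in L'(x)\times L'(y)$ with $\alpha\neq\beta$, while each $z_i$ with $|L'(z_i)|=2$ forbids at most one such pair (two forbidden orderings of the same pair would force $|L'(x)\cap L'(z_i)|=2$ or $|L'(x)\cap L'(y)|=2$), so an admissible pair survives and $z_1,z_2$ are colored greedily. The trade-off: your route avoids Lemma~\ref{lem:lowtriangle} and the $c(e)$ bookkeeping entirely and isolates the purely list-theoretic content of the configuration, whereas the paper's route is shorter given the lemmas it has already established and reuses the $2$-choosability of even cycles; your explicit verification that $p\notin X\cup R$ (via the absence of (C\ref{conf:pC3})) is a point the paper leaves implicit.
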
	
\begin{proof}
Let $z_1xyz_2$ be a diamond as in (C\ref{conf:baddiamond}); see Figure~\ref{fig-reducible-noted}.
Let $X = \{z_1, x, y, z_2\}$ and $R = \varnothing$, and let $\varphi$ be an $L$-coloring of $G-X$.
Observe $|L_\varphi(v)| \geq 2$ for all $v \in \{z_1,x,y,z_2\}$.
If the color $c(xy)$ no longer appears in both $L_\varphi(x)$ and $L_\varphi(y)$, then we can remove the edge $xy$ and extend the coloring to an $L$-coloring $\psi$ of $G$ since $z_1xz_2y$ is a 4-cycle, which is 2-choosable.

Suppose that $c(xy) \in L_\varphi(x)$.
By Lemma~\ref{lem:lowtriangle}, $c(xy) \neq c(xz_1)$ and  $c(xy) \neq c(xz_2)$.
Set $\varphi(x) = c(xy)$. Now $\varphi$ can be extended to an $L$-coloring of $G$
by coloring $y$ and then $z_1,z_2$ in a greedy way.
\end{proof}

\begin{lemma}\label{lem:crown}
(C\ref{conf:crown}) is reducible. 
\end{lemma}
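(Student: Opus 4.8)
The plan is to set $X=\{x\}$ and $R=\{y\}$. I will first check that $x$ and $y$ are precisely the vertices of the diamond having at most one neighbor outside the configuration: since $G$ contains no copy of (C\ref{conf:c3-1high}) (Lemma~\ref{lem:c3-1high}), the $3$-face $xz_1y$ has at least two high vertices, and as $x$ is low this forces $d(z_1)\ge 4$; likewise $d(z_2)\ge 4$, so $z_1,z_2$ each have at least two neighbors outside $C$. I will also note that $x\ne p$ and $y\ne p$, since each lies on a $3$-face and $G$ has no copy of (C\ref{conf:pC3}); hence $p\notin X\cup R$. Let $w$ be the neighbor of $y$ not in $\{x,z_1,z_2\}$, so $N(x)=\{z_1,y,z_2\}$ and $N(y)=\{x,z_1,z_2,w\}$, with $z_1,z_2,w$ pairwise distinct.

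By minimality, $G-x$ has an $L$-coloring $\varphi$ (here we use $x\ne p$). If some color of $L(x)$ avoids $\{\varphi(z_1),\varphi(y),\varphi(z_2)\}$, I extend $\varphi$ to $x$ and put $\psi=\varphi$ elsewhere; no vertex is recolored, so the conditions on $R$ are vacuous. Otherwise $L(x)=\{\varphi(z_1),\varphi(y),\varphi(z_2)\}$, and these three colors are distinct because $|L(x)|=3$. I then recolor $y$, claiming there is $c\in L(y)$ with $c\notin\{\varphi(z_1),\varphi(z_2),\varphi(w),\varphi(y)\}$. If no such $c$ existed, then since $\varphi(y)\in L(y)$ and $\varphi(y)\notin\{\varphi(z_1),\varphi(z_2),\varphi(w)\}$, the list $L(y)$ would be $\varphi(y)$ together with two of the colors $\varphi(z_1),\varphi(z_2),\varphi(w)$; in each of the three resulting cases one checks that $L(x)\cap L(y)$ has size at least two (either $\{\varphi(z_1),\varphi(z_2)\}$, or $\varphi(y)$ together with one $\varphi(z_i)$), contradicting that $L$ is a $(*,1)$-list assignment since $xy\in E(G)$. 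Recoloring $y$ with $c$ keeps the coloring of $G-x$ proper, and now the discarded color $\varphi(y)$ lies in $L(x)\setminus\{\varphi(z_1),c,\varphi(z_2)\}$, so assigning it to $x$ produces an $L$-coloring $\psi$ of $G$.

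It remains to check the reducibility conditions. Only $y$ is recolored, so $\psi=\varphi$ off $X\cup R$. Moreover, whenever $\psi(y)\ne\varphi(y)$, the colors $\psi$ places on the neighbors of $y$ inside $C$ all lie in $\{\varphi(y),\varphi(z_1),\varphi(z_2)\}=L(x)$ (together with $\varphi(w)$ if $w\in V(C)$), so this set of colors meets $L(y)$ in at most one color when $y$ has a neighbor outside $C$ (by $(*,1)$ applied to $xy$), and in at most two colors otherwise (since $c\in L(y)$ avoids all of them); this is exactly what the definition of reducibility requires of $y\in R$. The main obstacle is the middle step: proving that when $x$ cannot be colored directly, $y$ can always be recolored. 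The key observation is that every obstruction to recoloring $y$ forces $|L(x)\cap L(y)|\ge 2$, which is impossible along the edge $xy$ under a $(*,1)$-list assignment; the only delicate point is tracking that the ``extra'' common color is always the discarded color $\varphi(y)$.
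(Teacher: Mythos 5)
Your proof is correct and takes essentially the same approach as the paper: $X=\{x\}$, $R=\{y\}$, with the same two cases (either $x$ can be colored directly, or $y$ is recolored and $x$ takes the freed color $\varphi(y)$), and the same verification that after recoloring, $\{\psi(v):v\in N(y)\cap V(C)\}=L(x)$ meets $L(y)$ in at most one color. The only stylistic difference is that you establish the existence of a recoloring for $y$ by a direct case analysis against the $(*,1)$ constraint on $xy$, whereas the paper derives $\varphi(z_i)\notin L(y)$ via Lemmas~\ref{lem:vcolors} and~\ref{lem:trail}; these are two phrasings of the same fact, and your version also carefully checks the side conditions ($X\cup R$ is exactly the set of vertices with at most one outside neighbor, and $p\notin X\cup R$) that the paper leaves implicit.
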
	
\begin{proof}
Let $z_1xyz_2$ be a diamond as in (C\ref{conf:crown}); see Figure~\ref{fig-reducible-noted}.
Let $X = \{x\}$ and $R = \{y\}$, and let $\varphi$ be an $L$-coloring of $G - X$.
By Lemma~\ref{lem:vcolors}, the colors $c(xv)$ are distinct for $v \in \{z_1, y, z_2\}$.
If $\varphi(v) \neq c(xv)$ for some $v\in \{z_1, y, z_2\}$, then we color $\varphi(x) = c(xv)$ to find an $L$-coloring of $G$ without recoloring $y \in R$.
Thus, $\varphi(v) = c(xv)$ for all $v \in \{z_1,y,z_2\}$.

By Lemma~\ref{lem:trail}, the color $c(z_ix)$ is distinct from $c(z_iy)$ for each $i \in \{1,2\}$.
Thus, the colors $\varphi(z_i)$ are not in $L(y)$, so $|L(y) \cap \{ \varphi(v) : v \in \{x,z_1,z_2\}\}| = 1$.
Thus there is at least one color $a \in L(y)$ other than $c(xy)$ and $c(yw)$, where $w$ is the neighbor of $y$ outside the diamond.
We color $\psi(x) = c(xy)$ and recolor $\psi(y) = a$ to find an $L$-coloring of $G$.
\end{proof}

\begin{lemma}
(C\ref{conf:facialk4}) is reducible. 
\end{lemma}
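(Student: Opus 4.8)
Let $wxyz$ be a facial $K_4$ with $w$ the internal vertex and, without loss of generality, suppose $d(z) \le 4$. The idea is to take $X = \{w,x,y,z\}$ and $R = \varnothing$; then I need to show that every $L$-coloring $\varphi$ of $G - X$ extends to an $L$-coloring of $G$. Since $z$ has at most one neighbor outside the configuration (all of $w,x,y$ are inside, and $d(z)\le 4$ means $z$ has at most one external neighbor), and $w$ has no neighbor outside at all, while each of $x,y$ has at most one external neighbor by the usual argument, the set $X\cup R$ indeed captures exactly the vertices of $C$ with at most one neighbor outside $C$. The key observation is that after deleting $X$ and restricting $\varphi$, the residual list $L_\varphi(v)$ for $v\in\{w,x,y,z\}$ has size at least $3 - (\text{number of external colored neighbors})$: so $|L_\varphi(w)| \ge 3$, $|L_\varphi(x)|,|L_\varphi(y)| \ge 2$, and $|L_\varphi(z)| \ge 2$.

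The main step is then to color the $K_4$ on $\{w,x,y,z\}$ from these residual lists. Since $K_4$ is not $2$-choosable in general, I cannot just invoke a generic bound; instead I use the structure. First color $w$ last: color $x$, $y$, $z$ first (each with a list of size $\ge 2$ and forming a triangle) — but a triangle with all lists of size $2$ need not be colorable. So instead, I should exploit that $|L_\varphi(w)|\ge 3$: color $x$ from $L_\varphi(x)$, then $z$ from $L_\varphi(z)\setminus\{\varphi(x)\}$ (possible since $|L_\varphi(z)|\ge 2$), then $y$ from $L_\varphi(y)\setminus\{\varphi(x),\varphi(z)\}$ — this might fail if $|L_\varphi(y)| = 2$ and both colors are used. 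To handle that, I note $c(xy), c(yz)$ and argue as in Lemma~\ref{lem:lowtriangle}: by Lemmas~\ref{lem:vcolors} and~\ref{lem:trail} the edge-colors around a low or degree-$4$ vertex behave well, so I can order the choices to avoid the bad case, and finally color $w$ from its list of size $\ge 3$, which always succeeds against the three colors on $x,y,z$.

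I expect the main obstacle to be the case analysis when $d(z) = 4$ versus $d(z) = 3$, and in particular ensuring the residual lists really have the claimed sizes — this requires checking that $z$ (and $x$, $y$) each have at most one neighbor outside the $K_4$, which uses that $G$ is $C_5$- and $C_6$-free so the faces around the $K_4$ are large and no short cycles create extra adjacencies. Once the list sizes are pinned down, the coloring extension is a short greedy argument leveraging $|L_\varphi(w)| \ge 3$, with the edge-color lemmas (Lemmas~\ref{lem:vcolors}, \ref{lem:trail}, \ref{lem:lowtriangle}) resolving the one degenerate triangle case.
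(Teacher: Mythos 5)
Your proposal has a genuine gap in the very first step. You take $X = \{w,x,y,z\}$ and $R = \varnothing$, and you assert that ``each of $x,y$ has at most one external neighbor by the usual argument.'' But configuration (C\ref{conf:facialk4}) imposes \emph{no} degree bound on $x$ and $y$: only $w$ (internal, hence degree $3$) and one of $x,y,z$ (say $z$, with $d(z)\leq 4$) are constrained. The other two vertices may have arbitrarily large degree, hence arbitrarily many neighbors outside the facial $K_4$. This breaks the argument in two ways. First, the residual list sizes you claim ($|L_\varphi(x)|,|L_\varphi(y)|\geq 2$) need not hold: if $x$ has three external neighbors colored with three distinct colors from $L(x)$, then $L_\varphi(x)$ is empty. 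Second, and more fundamentally, the definition of reducibility requires $X\cup R$ to consist of \emph{exactly} the vertices of $C$ with at most one neighbor outside $C$ --- and $x,y$ will generally not belong to that set, so they cannot be placed in $X$. The $C_5$-/$C_6$-freeness you invoke rules out short cycles around the $K_4$ but says nothing about the degrees of $x$ and $y$.

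The paper's proof sidesteps all of this by observing that (C\ref{conf:facialk4}) contains (C\ref{conf:crown}) as a subgraph: take the two $3$-faces of the $K_4$ sharing the edge $wz$; these form a diamond in which $w$ plays the role of the degree-$3$ tip-edge vertex and $z$ plays the role of the degree-$\leq 4$ one, with $x$ and $y$ as the tips. The proof of Lemma~\ref{lem:crown} --- which uses the small sets $X = \{w\}$ and $R = \{z\}$ and never touches $x$ or $y$ except to read off the colors $\varphi(x),\varphi(y)$ --- then applies verbatim. This is the key idea you are missing: keep $X\cup R$ confined to the internal vertex and the one low-degree external vertex, so that no assumption on $d(x)$ or $d(y)$ is needed.
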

\begin{proof}
Observe that (C\ref{conf:facialk4}) contains (C\ref{conf:crown}) as a subgraph
and the proof for (C\ref{conf:crown}) works also for (C\ref{conf:facialk4}).
\end{proof}	

To complete the list of reducible configurations, we describe a way to build compound
reducible configurations from simple reducible configurations by adding
a bad face.

\begin{lemma}[Iterative Construction]\label{glue}
Let $C$ be a reducible configuration and let $v\in V(C)$ have a unique neighbor $u \in N(v) \setminus V(C)$.
Let $C'$ be obtained from $C$ by removing the edge $vu$ and adding two
new vertices $x,y$ such that $vxy$ is a 3-face, $y$ has exactly one neighbor $z$ in $N(y)\setminus V(C')$ and $x$ has at least one neighbor in $N(x)\setminus V(C')$.
If $C$ is reducible, then $C'$ is reducible.
\end{lemma}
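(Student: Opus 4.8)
The statement to prove is Lemma~\ref{glue}: starting from a reducible configuration $C$ with a vertex $v$ having a unique outside neighbor $u$, we form $C'$ by deleting $vu$ and attaching a bad $3$-face $vxy$ (with $y$ having a unique outside neighbor $z$, and $x$ having at least one outside neighbor). I want to show $C'$ is reducible, i.e.\ exhibit the sets $X', R'$ and the extension procedure. The natural choice is $X' = X$ (the old removal set of $C$) together with, roughly, the new low vertices: I would take $R' = R \cup \{x\}$ and add $y$ to whichever of $X'$ or $R'$ is appropriate — concretely $X' = X \cup \{y\}$ if $v \in R$ (so that $v$ may get recolored and we need $y$ free to repair the triangle), or handle $v \in X$ directly. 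In all cases $X'$ is nonempty, $p \notin X' \cup R'$ (since $p \notin X \cup R$ and $x,y,z$ are new), and $X' \cup R'$ is exactly the set of vertices of $C'$ with at most one neighbor outside $C'$: the vertices of $C$ other than $v$ keep their status, $v$ now has its outside neighbor replaced by the inside vertices $x,y$ so $v$ joins the "few outside neighbors" set, $y$ has exactly one outside neighbor $z$, and $x$ has at least one outside neighbor so $x$ qualifies only if it has exactly one — I would simply include $x$ in $R'$ and note the definition asks $X'\cup R'$ to contain \emph{exactly} those vertices, so I must double-check the degree bookkeeping and possibly restrict $X'\cup R'$ to not contain $x$ when $x$ has two or more outside neighbors; the cleanest route is to put $x \in R'$ only in the subcase $d_{G'}(x)$ leaves exactly one outside neighbor and otherwise leave $x$ out entirely, coloring it greedily at the end.

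**Key steps, in order.** First, given an $L$-coloring $\varphi$ of $G' - X'$, I restrict/transfer it to an $L$-coloring of the corresponding subgraph of the old graph $G$. Here $G$ is the graph where $C$ sits (with edge $vu$ present) and $G'$ is the graph where $C'$ sits; the point is that $G' - (\text{new vertices}) $ together with the edge $vu$ is essentially $G$, and $\varphi$ already colors everything outside $C'$, which includes everything outside $C$ except possibly $v$. Second, I apply the reducibility of $C$ to get an $L$-coloring $\psi_0$ of $G$ extending $\varphi$ appropriately, using the three bullet guarantees — in particular, if $v \in R$ and $\psi_0(v)$ differs from $\varphi(v)$, the second bullet bounds how many colors of $v$ are hit by its $C$-neighbors. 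Third, I lift $\psi_0$ to $G'$: I discard the edge $vu$ (it is gone in $G'$) and must color $x, y$ (and leave $z$ colored as in $\varphi$, $u$ as in $\psi_0$). The $3$-face $vxy$ with $v$ already colored and $y$'s outside color $\psi_0$-forced at $z$ means $y$ sees at most two forbidden colors from $\{v, z\}$ plus the edge $xy$; since $|L(y)| \geq 3$ this is tight and I need the $(*,1)$ separation to guarantee a free color — this is where Lemmas~\ref{lem:vcolors}, \ref{lem:trail}, \ref{lem:lowtriangle} do the work, exactly mirroring the proofs of Lemmas~\ref{lem:baddiamond} and \ref{lem:crown}. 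Fourth, I verify the three output bullets for $C'$: vertices outside $X' \cup R'$ are unchanged (they were outside $X \cup R$ and untouched by the $x,y$ coloring step); for $x \in R'$ with one outside neighbor, $|L(x) \cap \{\psi(y), \psi(v), \dots\}|$ is bounded by the separation condition on the edges from $x$ into $C'$; and if $v \in R$ got recolored, the bound promised by $C$'s reducibility still holds because $v$'s neighbor set inside $C'$ is $(N(v)\cap V(C)) \setminus\{u\} \cup \{x,y\}$ and the new colors $\psi(x),\psi(y)$ at the bad triangle are controlled by Lemma~\ref{lem:lowtriangle}.

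**Main obstacle.** The delicate point is the interaction between recoloring $v$ (permitted when $v \in R$) and then needing room to color the attached bad triangle $vxy$ consistently while still respecting the "$\leq 1$" or "$\leq 2$" bound that $C'$ must in turn guarantee about $x$ (and about $y$, if $y$ ends up in $R'$ rather than $X'$). Concretely: $\psi_0(v)$ may be forced, and then $y$ has only $|L(y)| - |\{\psi_0(v)\} \cup (\text{color from } z) \cup (\text{color from } x)| $ colors available, which can be $0$ in the worst arithmetic count — so the proof must use that the shared color on the edge $vx$ differs from the shared color on $vy$ (Lemma~\ref{lem:lowtriangle}, since $v$ is low in triangle $vxy$) and that $x$ can be deferred, peeling the triangle apart exactly as in the proof of Lemma~\ref{lem:baddiamond}: set $\psi(v) = c(vx)$ when forced isn't an issue, otherwise color $y$ first using a color outside $L(x)$, then $x$ greedily. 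I expect most of the lemma to be bookkeeping about which vertex lands in $X'$ versus $R'$ and verifying the "exactly the vertices with at most one outside neighbor" clause; the genuinely mathematical content is the one-line separation argument that makes the bad $3$-face colorable, which reuses the earlier lemmas verbatim.
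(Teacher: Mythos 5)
Your proof hits the right mathematical idea — that the separation constraints and the low vertex $y$ force $c(vy)$, $c(yx)$, $c(vx)$ to be pairwise distinct (Lemmas~\ref{lem:vcolors} and~\ref{lem:trail}, equivalently Lemma~\ref{lem:lowtriangle}), and that this distinctness is what lets one repair the bad $3$-face — but your bookkeeping of $X'$ and $R'$ is wrong in a way that makes the argument collapse.

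The paper sets $X' = X$ and $R' = R \cup \{y\}$, and $x$ is never placed in either set; in particular $\psi(x) = \varphi(x)$ is held fixed throughout. You instead propose $R' = R \cup \{x\}$ and sometimes $X' = X \cup \{y\}$. Two concrete problems. First, when $x$ has two or more outside neighbors it cannot be in $R'$ (the definition requires $X' \cup R'$ to be \emph{exactly} the vertices with at most one outside neighbor), and you acknowledge this, but then say you would ``leave $x$ out entirely, coloring it greedily at the end.'' That is not permitted: if $x \notin X' \cup R'$, the first bullet of the definition forces $\psi(x) = \varphi(x)$, so you have no freedom to color $x$ greedily. Second, putting $y$ into $X'$ in the $v \in R$ case means $y$ is uncolored after $\varphi$ and must be colored last; but $y$ has three neighbors $v$, $x$, $z$, each of which can forbid one color from the size-$3$ list $L(y)$, and separation alone does not prevent all three of $c(vy)$, $c(xy)$, $c(yz)$ being blocked simultaneously. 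The flexibility you need comes precisely from having $y \in R'$: then $y$ arrives pre-colored by $\varphi$, and the proof only recolors $y$ in the one degenerate case ($\varphi(y) = c(vy)$ and $\varphi(x) = c(vx)$), swapping it to $c(yx)$, which is safe because the three edge colors at $y$ are distinct and $c(vx) \notin L(y)$. That recoloring also respects the bound in the second bullet, since $L(y) \cap \{\psi(v), \psi(x)\} = \{c(vy)\}$ has size one.

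So the gap is not in the geometric picture but in which vertex is allowed to move. The fix is exactly the paper's choice: $X' = X$, $R' = R \cup \{y\}$, and then split on whether $v \in X$ or $v \in R$, in each case only ever adjusting $\psi(v)$ and $\psi(y)$ while $\psi(x)$ stays equal to $\varphi(x)$.
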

See Figure~\ref{fig-rules-generating-noted} for a visualization of this construction.
\begin{proof}
Let $G$ contain $C'$.
Observe that since $y$ has degree three, Lemmas~\ref{lem:vcolors} and \ref{lem:trail} guarantee that $c(vy)$, $c(yx)$, and $c(vx)$ are distinct.

Let $X, R \subseteq V(C)$ be given by the definition of $C$ being reducible.
Let $X' = X$ and $R' = R \cup \{y \}$.
We consider cases based on whether $v$ is in $X$ or $R$.

\begin{figure}[tp]
\begin{center}
\includegraphics{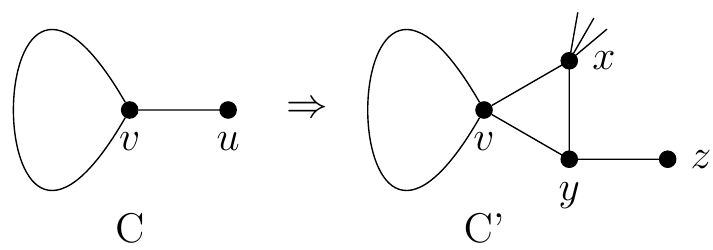}
\caption{Creating compound reducible configurations in Lemma~\ref{glue}.}
  \label{fig-rules-generating-noted}
\end{center}
\end{figure}

\begin{mycases}
\mycase{$v \in X$}
For an $L$-coloring $\varphi$ of $G - X$, we use the method of extending a coloring to the vertices in $C$ given by its proof of reducibility.
When coloring $v$, the method expects only one color from $L(v)$ appearing in its neighbors outside of $C$.
If at most one of $\varphi(x)$ or $\varphi(y)$ appears in $L(v)$, then the method to color $C$ completes with an $L$-coloring $\psi$ of $G$.
Otherwise, $\varphi(y) = c(vy)$ and $\varphi(x) = c(vx) \neq c(yx)$.
Thus, we recolor $\psi(y) = c(yx)$ and assign $\psi(v) = \varphi(y)$.
This recolors $y$ with a color that does not appear in its neighbors, and $y$ has exactly one color restricted within $C'$.

\mycase{$v \in R$}
For an $L$-coloring $\varphi$ of $G - X'$, we use the method of extending a coloring to the vertices in $C$ given by its proof of reducibility.
If it can be colored without recoloring $v$, then the resulting coloring is an $L$-coloring on $G$.
However, if $v$ must be recolored, then $v$ has at most one color restricted from within $C$.
Since $c(vx) \neq c(vy)$, if $v$ has no available colors for this recoloring, we have $\varphi(y) = c(vy)$ and $\varphi(x) = c(vx) \neq c(xy)$.
Thus, we recolor $\psi(v) = \varphi(y) = c(vy)$ and $\psi(y) = c(yx)$.
Observe that $v$ has at most two colors restricted by its neighbors in $C'$, and $y$ has at most one color restricted by its neighbors in $C'$.
\end{mycases}
In either case, we have modified the coloring algorithm for $C$ to apply for $C'$.
\end{proof}

\begin{lemma}
(C\ref{conf:firstcompound})--(C\ref{conf:last}) are  reducible. 
\end{lemma}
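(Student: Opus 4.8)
The plan is to obtain each of (C\ref{conf:firstcompound})--(C\ref{conf:last}) from one of the already-established simple reducible configurations (C\ref{conf:lowface})--(C\ref{conf:lastsimple}) by one or more applications of Lemma~\ref{glue}. Since Lemma~\ref{glue} shows that replacing an edge with one endpoint in a reducible configuration by a bad $3$-face again produces a reducible configuration, and (C\ref{conf:lowface})--(C\ref{conf:lastsimple}) are reducible by the preceding lemmas, this immediately yields reducibility of all the compound configurations. The only recurring thing to verify is that, at each stage, the vertex we glue onto has a \emph{unique} neighbor outside the current configuration, so that the hypothesis of Lemma~\ref{glue} is met; one should also keep in mind the elementary bookkeeping that a gluing at a vertex $w$ raises $d(w)$ by exactly one, introduces a new bad $3$-face $wst$ with $d(t)=3$, and that such a bad face becomes \emph{good} as soon as its low vertex $t$ is itself used as an attachment point in a later gluing.

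The single-step configurations match up as follows. Configuration (C\ref{conf:facialk4c3}) is (C\ref{conf:facialk4}) in which the distinguished outer vertex $z$ of the facial $K_4$ has degree $4$, with the bad $3$-face $zvu$ glued onto $z$ (so $d(z)$ becomes $5$). Configuration (C\ref{conf:diamondc3a}) is (C\ref{conf:baddiamond}) with a bad $3$-face $yvu$ glued onto the degree-$4$ vertex $y$ (raising $d(y)$ to $5$), and (C\ref{conf:diamondc3b}) is (C\ref{conf:crown}) with exactly the same gluing at its degree-$4$ vertex $y$. Configuration (C\ref{conf:diamondc3c}) is (C\ref{conf:baddiamond}) with a bad $3$-face $z_1vu$ glued onto the degree-$3$ vertex $z_1$ of the diamond. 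Configuration (C\ref{conf:squareplusc3}) is (C\ref{conf:lowface}) with a bad $3$-face glued onto one (degree-$3$) corner $x$ of the normal $4$-face. Configuration (C\ref{conf:c3c3}) is (C\ref{conf:c3-1high}) with a bad $3$-face glued onto its degree-$3$ vertex $x$. In each case one checks that the degrees prescribed by the target configuration are precisely those produced by the gluing, and that the "other two neighbors forming a bad $3$-face" named in the statement are exactly the two new vertices.

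The remaining four configurations use two or three applications of Lemma~\ref{glue}. Configuration (C\ref{conf:threec3s}) comes from (C\ref{conf:c3-1high}) by gluing a bad $3$-face onto each of its two degree-$3$ vertices, which produces the chain of three $3$-faces. Configuration (C\ref{conf:diamondNEW2c3s}) comes from (C\ref{conf:baddiamond}) by gluing a bad $3$-face onto $x$ (raising $d(x)$ to $5$) and a second bad $3$-face onto $z_2$ (raising $d(z_2)$ to $4$). For (C\ref{conf:diamondNEWc3chain}) and (C\ref{conf:diamondNEWc3chainPLUS}) the crucial observation is how the \emph{good} $3$-face $z_2uv$ arises: starting from (C\ref{conf:baddiamond}), first glue a $3$-face $z_2uv$ onto $z_2$ taking $v$ to be the new degree-$3$ vertex (so at this stage $z_2uv$ is bad), and then glue a bad $3$-face onto $v$; this makes $d(v)=4$, and now $z_2$, $u$, and $v$ all have degree at least $4$, so $z_2uv$ has become a good $3$-face while $v$ is incident to the freshly added bad $3$-face, exactly as required. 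Configuration (C\ref{conf:diamondNEWc3chainPLUS}) is the same three-step construction with one further gluing of a bad $3$-face onto $x$ to make $d(x)=5$.

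The main obstacle is purely bookkeeping rather than conceptual: for each intermediate configuration we must confirm that the vertex being glued onto currently has exactly one neighbor outside that configuration (otherwise Lemma~\ref{glue} does not apply), that the newly introduced vertices are distinct from $p$ (which is automatic, as Lemma~\ref{glue} never adds $p$), and that after all the gluings the degrees and incidences agree \emph{on the nose} with the definition of the target configuration, including identifying each "the other two neighbors form a bad $3$-face" clause with a specific added face. There is nothing deep here, but it must be carried out for all of (C\ref{conf:firstcompound})--(C\ref{conf:last}); tracing the construction of Figure~\ref{fig-rules-generating-noted} between Figures~\ref{fig-reducible} and \ref{fig-reducible56plus} is the cleanest way to see that each compound configuration is realized as claimed.
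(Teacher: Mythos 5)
Your proposal is correct and follows the paper's proof strategy exactly: each compound configuration is obtained from an already-proved simple one by one or more applications of Lemma~\ref{glue}, and your pairings agree with the paper's table. One remark is worth making. For (C\ref{conf:diamondNEWc3chain}) your route runs (C\ref{conf:baddiamond}) $\to$ glue onto wing $z_2$ $\to$ glue onto the new low vertex $v$, whose intermediate after the first gluing is exactly (C\ref{conf:diamondc3c}); the paper's table instead lists (C\ref{conf:diamondc3a}) $\longrightarrow$ (C\ref{conf:diamondNEWc3chain}), but (C\ref{conf:diamondc3a}) has a degree-$5$ spine vertex with its extra $3$-face hanging off the spine, whereas (C\ref{conf:diamondNEWc3chain}) has all four diamond vertices of degree at most $4$ and its chain hanging off a wing, so a single gluing cannot take one to the other. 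Your reading through (C\ref{conf:diamondc3c}) is the one that actually realizes (C\ref{conf:diamondNEWc3chain}) and (C\ref{conf:diamondNEWc3chainPLUS}), and the paper's table entry appears to be a slip. One small imprecision on your end: the second gluing forces $d(v)\geq 4$ but not $d(u)\geq 4$ (Lemma~\ref{glue} only guarantees the non-low new vertex has at least one outside neighbor), so goodness of $z_2uv$ is an extra hypothesis built into the statement of (C\ref{conf:diamondNEWc3chain}) rather than a consequence of the gluing; this is harmless, since (C\ref{conf:diamondNEWc3chain}) is then a special case of the more general reducible compound you construct.
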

\begin{proof}
Each configuration is built using Lemma~\ref{glue} from a known reducible configuration.
We use the notation ``(C$i$) $\stackrel{\text{ }}{\longrightarrow}$ (C$j$)'' to denote ``Applying Lemma~\ref{glue} to (C$i$) results in (C$j$),'' in the following pairs:
\begin{center}
\begin{tabular}[h]{rcl@{\qquad}rcl@{\qquad}rcl}
(C\ref{conf:facialk4}) 
&$\stackrel{\text{ }}{\longrightarrow}$ 
&(C\ref{conf:facialk4c3})
&

(C\ref{conf:baddiamond})
&$\stackrel{\text{ }}{\longrightarrow}$ 
&(C\ref{conf:diamondc3a}) 
&

(C\ref{conf:crown})
&$\stackrel{\text{ }}{\longrightarrow}$ 
&
(C\ref{conf:diamondc3b}) 
\\

(C\ref{conf:baddiamond})
&$\stackrel{\text{ }}{\longrightarrow}$ 
&(C\ref{conf:diamondc3c}) 
&

(C\ref{conf:lowface})
&$\stackrel{\text{ }}{\longrightarrow}$ 
&(C\ref{conf:squareplusc3}) 

&
(C\ref{conf:c3-1high})
&$\stackrel{\text{ }}{\longrightarrow}$ 
&(C\ref{conf:c3c3}) 

\\
(C\ref{conf:c3c3})
&$\stackrel{\text{ }}{\longrightarrow}$ 
&(C\ref{conf:threec3s}) 
&
(C\ref{conf:diamondc3a})
&$\stackrel{\text{ }}{\longrightarrow}$ 
&(C\ref{conf:diamondNEW2c3s}) 
&
(C\ref{conf:diamondc3a})
&$\stackrel{\text{ }}{\longrightarrow}$ 
&(C\ref{conf:diamondNEWc3chain}) 
\\
&&&
(C\ref{conf:diamondNEWc3chain})
&$\stackrel{\text{ }}{\longrightarrow}$ 
&(C\ref{conf:diamondNEWc3chainPLUS}) 

\end{tabular}
\end{center}
Thus, by Lemma~\ref{glue} and previous lemmas, these configurations are reducible.
\end{proof}

\section{Conclusion}

The main problem if planar graphs are $(3,1)$-choosable remains open. 
We hope that this paper could serve as an inspiration of possible approaches to the problem.
Unfortunately, the conditions of Theorem~\ref{thm:thomassen3cycle} and Theorem~\ref{thm:thomassen4cycle} are not valid for all planar graphs; see Figure~\ref{fig-notgeneralize}.
Let us note that we do not have an example where $P$ has length one and the endpoints are not in a triangle.

\begin{figure}[htp]
\begin{center}
\includegraphics{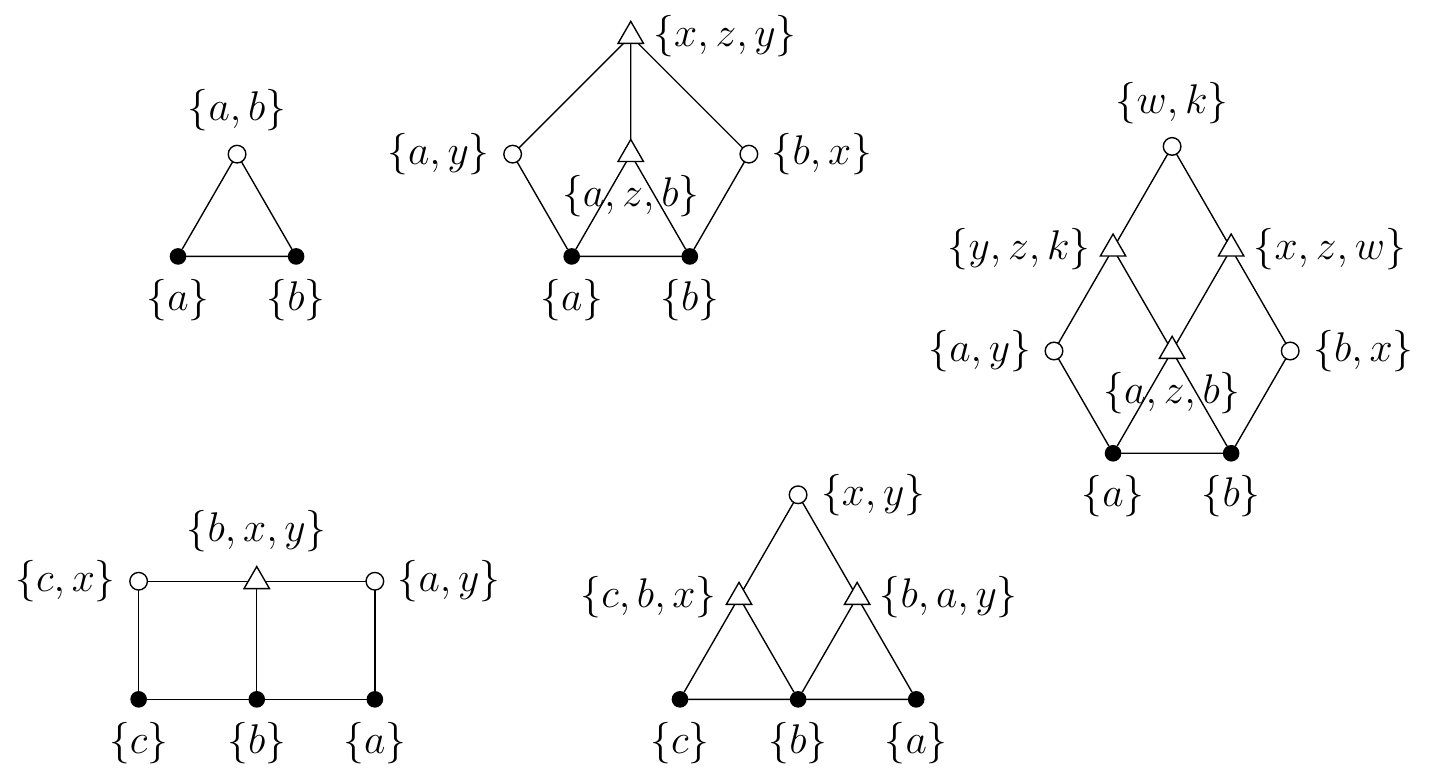}
\caption{Some examples where conditions of Theorem~\ref{thm:thomassen3cycle} and Theorem~\ref{thm:thomassen4cycle} do not generalize to all planar graphs.}
  \label{fig-notgeneralize}
\end{center}
\end{figure}

The last thing we promised is a planar graph $G$ without 4-cycles and 5-cycles that is not $(3,2)$-choosable.
It is a modification of a construction of Wang, Wen, and Wang~\cite{08WaWeWa}.
The main building gadget is the graph $H$ depicted in Figure~\ref{fig-not32}. It has two vertices with lists
of size one. The graph $G$ is created by taking 9 copies of $H$ and identifying vertices with lists $\{a\}$
into one vertex $v$ and vertices with lists $\{b\}$ into one. 
Vertices $u$ and $v$ get disjoint lists and we assign to every 9 possible colorings of $u$ and $v$ one
gadget, where the coloring of $u$ and $v$ cannot be extended. By inspecting the gadget, the reader
can check that $G$ cannot be colored and that $G$ has no cycles of length 4 or 5.

\begin{figure}[htp]
\begin{center}
\includegraphics[scale=0.8]{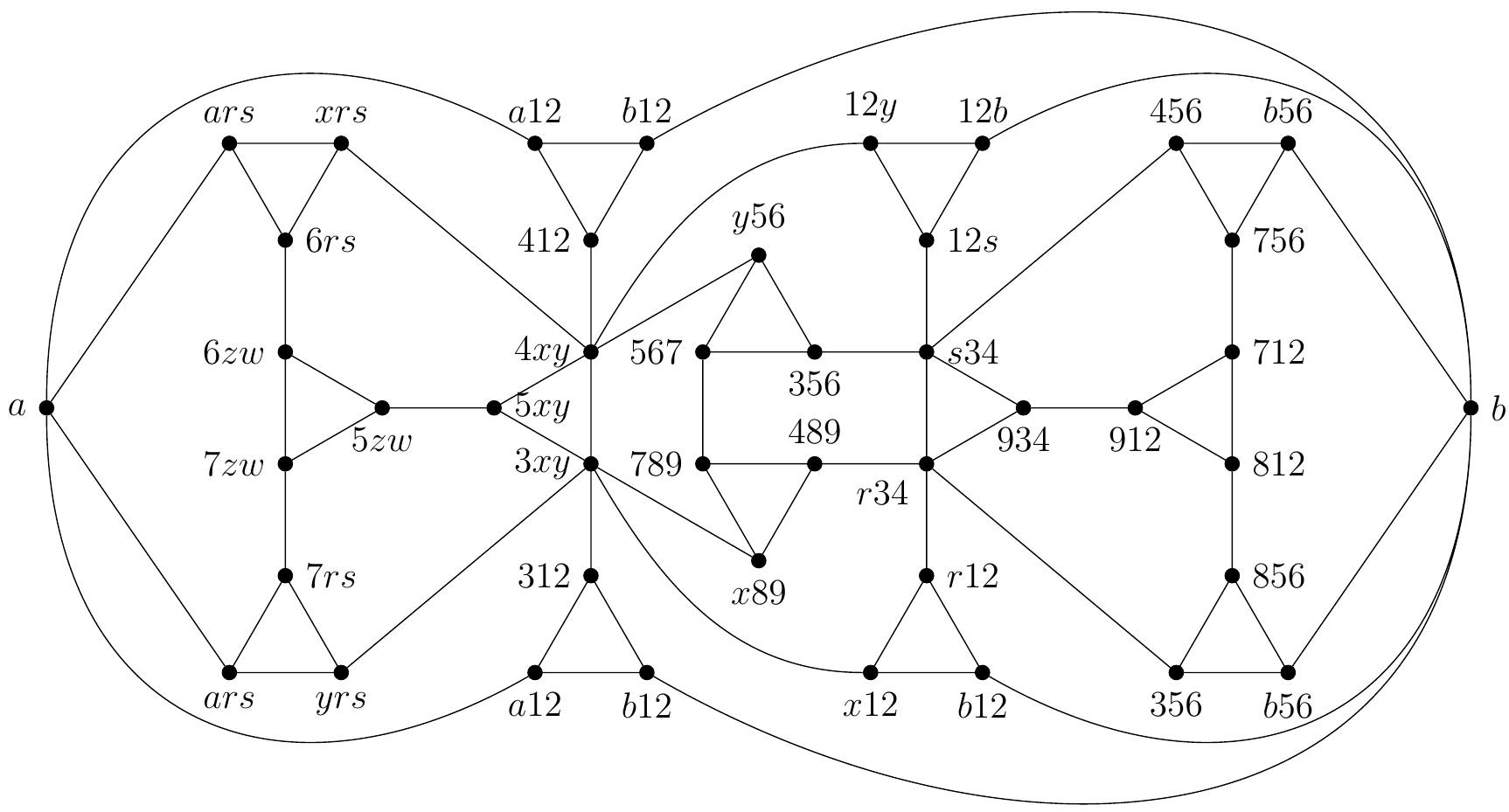}
\caption{Building block of a non $(3,2)$-choosable planar graph without cycles of length $4$ and $5$.}
  \label{fig-not32}
\end{center}
\end{figure}

The authors thank Mohit Kumbhat for introducing them to the problem 
during $3^{\text{rd}}$ Eml\'{e}kt\'{a}bla Workhops and thank Kyle F. Jao for fruitful discussions and encouragement in the early stage of the project.


\begin{thebibliography}{10}

\bibitem{borodin13}
O.V. Borodin.
\newblock Colorings of plane graphs: A survey.
\newblock {\em Discrete Mathematics}, 313:517--539, 2013.

\bibitem{zkk1}
Z.~F\"uredi, A.~Kostochka, and M.~Kumbhat.
\newblock Choosability with separation of complete graphs.
\newblock manuscript.

\bibitem{zkkarxiv}
Z.~{F{\"u}redi}, A.~{Kostochka}, and M.~{Kumbhat}.
\newblock {Choosability with separation of complete multipartite graphs and
  hypergraphs}, 2011.
\newblock arXiv:1109.2969.

\bibitem{ktv}
J.~Kratochv\'{\i}l, Z.~Tuza, and M.~Voigt.
\newblock {Brook's-type theorems for choosability with separation}.
\newblock {\em J. Graph Theory}, 27:43--49, 1998.

\bibitem{Steinberg}
R.~Steinberg.
\newblock The state of the three color problem. {Q}uo {V}adis, {G}raph
  {T}heory?
\newblock {\em Ann. Discrete Math}, 55:211--248, 1993.

\bibitem{thomassen1994}
C.~Thomassen.
\newblock Every planar graph is 5-choosable.
\newblock {\em J. Combin. Theory Ser. B}, 62:180--181, 1994.

\bibitem{thomassen1995-34}
C.~Thomassen.
\newblock $3$-list-coloring planar graphs of girth $5$.
\newblock {\em J. Combin. Theory Ser. B}, 64:101--107, 1995.

\bibitem{93Voigt}
M.~Voigt.
\newblock List colourings of planar graphs.
\newblock {\em Discrete Math}, 120:215--219, 1993.

\bibitem{95Voigt}
M.~Voigt.
\newblock A not 3-choosable planar graph without 3-cycles.
\newblock {\em Discrete Math}, 146:325--328, 1995.

\bibitem{riste}
R.~\v{S}krekovski.
\newblock {A note on choosability with separation for planar graphs}.
\newblock {\em Ars Combin.}, 58:169--174, 2001.

\bibitem{08WaWeWa}
D.-Q. Wang, Y.-P. Wen, and K.-L. Wang.
\newblock A smaller planar graph without 4-, 5-cycles and intersecting
  triangles that is not 3-choosable.
\newblock {\em Inform. Process. Lett.}, 108:87--89, 2008.

\end{thebibliography}

\end{document}